\definecolor{Sepia}{rgb}{0.44, 0.26, 0.08}
\definecolor{Lavender}{rgb}{0.9, 0.9, 0.98}
\newsavebox{\columnExample}
\newsavebox{\columnOverview}
\newsavebox{\columnNext}
\newsavebox{\columnOverviewB}
\theoremstyle{plain}
\newtheorem{theorem}{Theorem}[section]
\newtheorem{proposition}[theorem]{Proposition}
\newtheorem{lemma}[theorem]{Lemma}
\newtheorem{corollary}[theorem]{Corollary}
\theoremstyle{definition}
\newtheorem{definition}[theorem]{Definition}
\newtheorem{example}[theorem]{Example}
\theoremstyle{remark}
\newtheorem*{remark}{Remark}
\newcommand {\mm}[1] {\ifmmode{#1}\else{\mbox{\(#1\)}}\fi}
\newcommand{\bd}            {\mm{{\rm bd\,}}}
\newcommand{\cbd}           {\mm{{\rm cbd\,}}}
\newcommand{\filter}        {\mm{{f}}}
\newcommand{\leqf}          {\mm{{\,\leq_f\,}}}
\newcommand{\geqf}          {\mm{{\,\geq_f\,}}}
\newcommand{\ltf}           {\mm{{\,<_f\,}}}
\newcommand{\leqr}[1]       {\mathbin{\,\leq_{#1}\,}}
\newcommand{\ltr}[1]        {\mathbin{\,<_{#1}\,}}
\newcommand{\ltrdot}[1]     {\mathbin{\,\lessdot_{#1}\,}}
\newcommand{\filteri} [2]   {\mm{f^{#1}_{#2}}}    
\newcommand{\bmapp}         {\mm{\kappa'}}
\newcommand{\bmapc}        {\mm{\kappa^{c}}}
\newcommand{\bmap}          {\mm{\kappa}}
\newcommand{\quotient}[2]   {\mm{X^{#1}_{#2}}}
\newcommand{\Diff}          {\mm{{\rm Diff}}}
\newcommand{\pp}            {\mm{{\rm p}}}
\newcommand{\SH}[1]         {\mm{{\rm SH}{({#1})}}}
\newcommand{\LP}[1]         {\mm{{\rm LP}{({#1})}}}
\newcommand{\BD}[1]         {\mm{{\rm BD}{({#1})}}}
\newcommand{\BDL}           {\mm{{\rm BD}}}
\newcommand{\Depth}[1]      {\mm{{\mathcal D\!}_{#1}}}
\newcommand{\bthmark}       {\mm{\circ}}
\newcommand{\dthmark}       {\mm{{\!\times}}}
\newcommand{\Birth}[2]      {\mm{{#1}_{#2}^\bthmark}}
\newcommand{\Death}[2]      {\mm{{#1}_{#2}^\dthmark}}
\newcommand{\bth}[1]        {\mm{#1^\bthmark}}
\newcommand{\dth}[1]        {\mm{#1^\dthmark}}
\newcommand{\alphaPair}[1]  {\mm{\alpha_{#1}}}
\newcommand{\omegaPair}[1]  {\mm{\omega_{#1}}}
\newcommand{\CLOalpha }     {\mm{A}}
\newcommand{\CLOlambda}     {\mm{\Lambda}}
\newcommand{\CLOphi}        {\mm{\Phi}}
\newcommand{\CLOpsi}        {\mm{\Psi}}
\newcommand{\CLOsigma}      {\mm{\Sigma}}
\newcommand{\CLOomega}      {\mm{\Omega}}
\newcommand{\Rho}           {\mm{\sf R}}
\newcommand{\matrixA}       {\mm{\sf A}}
\newcommand{\matrixB}       {\mm{\sf B}}
\newcommand{\matrixR}       {\mm{\sf R}}
\newcommand{\matrixV}       {\mm{\sf V}}
\newcommand{\Bd}[1]         {\mm{\Delta_{#1}}}
\newcommand{\Bdp}[1]        {\mm{\Delta_{#1}'}}
\newcommand{\Bdpp}[1]       {\mm{\Delta_{#1}''}}
\newcommand{\birthmap}      {\mm{{\rm bth}}}
\newcommand{\last}          {\mm{{\sf last}\,}}
\newcommand{\lowmap}        {\mm{{\sf low}}}
\newcommand{\Id}             {\mm{{\sf Id}}}
\newcommand{\rr}             {\mm{{\sf r}}}
\newcommand{\Rank}          {\mm{{\rm rank\,}}}
\newcommand{\closure}       {\mm{{\rm tcl\,}}}
\newcommand{\Pred}[1]       {\mm{{#1}-}}
\newcommand{\Succ}[1]       {\mm{{#1}+}}
\newcommand{\supp}[1]       {\mm{|#1|}}
\newcommand{\pto}           {\mm{\nrightarrow}}
\newcommand{\field}         {\mm{{\mathbb F}}}
\newcommand{\Rspace}        {\mm{{\mathbb R}}}
\newcommand{\Zspace}        {\mm{{\mathbb Z}}}
\newcommand{\card}[1]       {\mm{{\#}{#1}}}
\newcommand{\Bgroup}[1]     {\mm{{\sf B}_{#1}}}
\newcommand{\Cgroup}[1]     {\mm{{\sf C}_{#1}}}
\newcommand{\Hgroup}[1]     {\mm{{\sf H}_{#1}}}
\newcommand{\Zgroup}[1]     {\mm{{\sf Z}_{#1}}}
\newcommand{\Circle}        {\mm{{\mathbb S}^1}}
\newcommand{\scst}          {\scriptstyle}
\newcommand{\exend}{\hspace*{\fill}$\Diamond$}
\newcommand{\Min}[1]        {\mm{{\rm Min}\left({#1} \right)}}
\newcommand{\Down}[2]       {\mm{#1^<(#2)}}
\newcommand{\Path}          {\mm{{\Pi}}}
\newcommand{\Digraph}[1]    {\mm{G_{#1}}}
\renewcommand{\emptyset}{\varnothing}
\newcommand{\va}          {\mm{{\tt A}}}
\newcommand{\vb}          {\mm{{\tt B}}}
\newcommand{\vc}          {\mm{{\tt C}}}
\newcommand{\vd}          {\mm{{\tt D}}}
\newcommand{\ve}          {\mm{{\tt E}}}
\newcommand{\vf}          {\mm{{\tt F}}}
\newcommand{\vg}          {\mm{{\tt G}}}
\newcommand{\vh}          {\mm{{\tt H}}}
\newcommand{\ea}          {\mm{{\tt a}}}
\newcommand{\eb}          {\mm{{\tt b}}}
\newcommand{\ec}          {\mm{{\tt c}}}
\newcommand{\ed}          {\mm{{\tt d}}}
\newcommand{\ee}          {\mm{{\tt e}}}
\newcommand{\ef}          {\mm{{\tt f}}}
\newcommand{\eg}          {\mm{{\tt g}}}
\newcommand{\egg}          {\mm{{\tt g}}}
\newcommand{\eh}          {\mm{{\tt h}}}
\newcommand{\fa}          {\mm{{\tt \alpha}}}
\newcommand{\fb}          {\mm{{\tt \beta}}}
\newcommand{\fc}          {\mm{{\tt \gamma}}}
\newcommand\hlight[1]{\tikz[overlay, remember picture,baseline=-\the\dimexpr\fontdimen22\textfont2\relax]\node[rectangle,fill=blue!50,rounded corners,fill opacity = 0.2,draw,text opacity =1] {$#1$};} 
\newcommand\hlightg[1]{\tikz[overlay, remember picture,baseline=-\the\dimexpr\fontdimen22\textfont2\relax]\node[rectangle,fill=green!50,rounded corners,fill opacity = 0.2,draw,text opacity =1] {$#1$};} 
\newcommand{\Skip}[1]       {}
\title[
The poset of cancellations
]{
The poset of cancellations induced  by gradient dynamics in a filtered Lefschetz complex 
}
\author[H. Edelsbrunner]{Herbert Edelsbrunner}
\address{Herbert Edelsbrunner, ISTA (Institute of Science and Technology Austria), Kloster\-neu\-burg, Austria
} 
\email{herbert.edelsbrunner@ist.ac.at}
\author[M. Lipi\'nski]{Micha\l{} Lipi\'nski}
\address{Micha\l{} Lipi\'nski, ISTA (Institute of Science and Technology Austria), Kloster\-neu\-burg, Austria
} 
\email{michal.lipinski@ist.ac.at}
\author[M. Mrozek]{Marian Mrozek}
\address{Marian Mrozek, Division of Computational Mathematics,
  Faculty of Mathematics and Computer Science,
  Jagiellonian University, Krak\'ow, Poland
}
\email{marian.mrozek@uj.edu.pl}
\author[M. Soriano Trigueros]{Manuel Soriano-Trigueros}
\address{Manuel Soriano Trigueros, Universidad de Sevilla, Seville, Spain.
} 
\email{msoriano4@us.es}
\keywords{Algebraic topology, Lefschetz complexes, simplification, topology optimization, discrete Morse theory, persistent homology, shallow pairs, cancellation, total and partial orders.}
\thanks
{\footnotesize
  The first author is partially supported by the Wittgenstein Prize, FWF grant no.\ Z 342-N31, and by the DFG Collaborative Research Center TRR 109, FWF grant no.\ {I 02979-N35.}
  The third author is partially supported by Polish National Science Center under Opus Grants 2019/35/B/ST1/00874 and 2025/57/B/ST1/00550 as well as by the program Excellence Initiative – Research University at the Jagiellonian University in Kraków.
}
\begin{document}

\begin{abstract}
  Motivated by questions about simplification of topology, 
  we take a discrete approach to the dependency of simplifying operations,
  using methods based on combinatorial gradient dynamics.
  We interpret the filter in persistent homology as a discrete Morse function.
  This lets us gradually simplify the dynamics in parallel with space and filter, while preserving homology.
  As a tool, we use shallow pairs, which are simultaneously birth-death pairs and combinatorial vectors.
  This allows us to extract topological features by the pairing of cells via persistence and simplify them using combinatorially defined cancellations.
  The main new concept is the \emph{depth poset} of birth-death pairs, whose minimal elements are shallow pairs and whose linear extensions are sequences of cancellations that reduce the complex to its essential homology.
  Cancellations of birth-death pairs in a down set of this poset preserve the other birth-death pairs and the poset dependencies between them.
  An algorithm that constructs the depth poset in two passes of standard matrix reduction is given and proved correct.
\end{abstract}

\maketitle

\section{Introduction}
\label{sec:1}

The ties between persistent homology and combinatorial dynamics have been pointed out and fruitfully used in the literature,
in particular as a tool in image analysis~\cite{DRS15} and efficient computation of barcodes~\cite{Bau21}.
Namely, certain birth-death pairs of a filtered CW complex, called \emph{close pairs} in~\cite{DRS15} and \emph{apparent pairs} in ~\cite{Bau21}, form vectors in a gradient combinatorial vector field in the sense of Forman~\cite{For98,For98b} and turn out to be helpful in simplifying the complex.
In this paper\footnote{Subsets of the results presented here appeared in earlier versions of this paper \cite{EdMr23,ELMS24}.},
we show that every birth-death pair is a vector in a hierarchy of combinatorial vector fields and associated simplifications of the complex. 
This hierarchy is revealed through a partial order, which gives each birth-death pair a certain \emph{depth}.
Dependencies in the partial order reflect the possibility of simplifications via cancellations. 
Birth-death pairs ordered by this partial order form  the \emph{poset of cancellations} or briefly the \emph{depth poset}.
A close or apparent pair, defined as
a pair of cells $(x, y)$ in a filtered complex such that $x$ is the last face preceding $y$, and $y$ is the first coface succeeding $x$,
is always a birth-death pair and has depth zero. Hence, in this paper we call such a pair a \emph{shallow pair}. 
Birth-death pairs with positive depth become shallow only after some cancellations in an order consistent with the depth poset.

\smallskip
The depth poset sheds light on the general question of simplification while preserving homology or, more specifically, on the dependencies between the operations that locally simplify.
Examples are cancellations of critical point pairs in a Morse function and collapses of simplex pairs in a simplicial complex.
Another source of motivation is topological optimization.
To relate the two problems, we may think of `simplifying' a function on a domain, while `optimizing' the topology of a sublevel set.
The target of the optimization may address topology directly (such as minimizing the Betti numbers under some constraints) or indirectly (such as maximizing the strength-to-weight ratio of a shape).
Optimizing shapes is important in engineering, which dedicates an entire discipline to this task \cite{BeSi04}.

\smallskip
The simplification of a smooth function on a manifold by canceling critical points in pairs is a classic idea in smooth Morse theory \cite{Mil63}, whose computational execution is riddled with technical difficulties.
The $2$- and $3$-dimensional cases are of substantial practical importance in geometric visualization \cite{HLHIDSHG16}, and already in three dimensions, the technical challenges abound; see e.g.\ \cite{GBP12} or \cite{LGMT21}.
In order to overcome these challenges we work in the setting of Lefschetz complexes~\cite{Lef42}, a combinatorial abstraction of simplicial complexes and CW complexes.
Roughly speaking, a Lefschetz complex is a geometric interpretation of the boundary matrix on a fixed basis in a finitely generated free chain complex.
On the one hand, the fixed basis acts as the combinatorial phase space of a dynamical system, which facilitates practicing Morse theory.
On the other hand, the dominantly algebraic nature of Lefschetz complex lets us overcome the mentioned technical difficulties.

\smallskip
A filter may be viewed as a discrete Morse function whose dynamics may be simplified via shallow pairs. 
Canceling shallow pairs produces a Morse complex which represents the simplified space with simplified dynamics.
Interestingly, canceling shallow pairs preserves the other, not yet canceled birth-death pairs. 
In addition, the cancellation turns some non-shallow birth-death pairs into  shallow pairs.
Thus, iteratively canceling shallow pairs eventually cancels all birth-death pairs.
This is how we assign the depth to a birth-death pair: it is the number of rounds of cancellations needed before the pair becomes shallow.
The depth poset expresses how individual cancellations affect the shallowness of other pairs.
The shallow pairs are the nodes without predecessors in the poset, so they are the first to be canceled.
This justifies their name. 
It is not difficult to give examples of filters with the same persistence diagrams but different depth posets.
The depth poset thus adds to the information contained in the persistence diagram.

\smallskip
The organization of the paper is as follows. 
The next section introduces the main ideas and results of the paper. 
It is meant to provide quick access to the contents of the paper, in an informal language, based on several examples.  
The formal definitions, theorems, and proofs are presented in the subsequent sections.
This part starts with Section~\ref{sec:3}, where we gather the notation, terminology, and basic results on posets, matrix algebra, and Lefschetz complexes. 
It is followed by Section~\ref{sec:4} on combinatorial dynamics in Lefschetz complexes,
and Section~\ref{sec:5} on persistent homology in Lefschetz complexes. 
These three sections recall several results scattered in the literature but often presented in a different context and notation, and as auxiliary, technical results. 
Since they are fundamental for this paper and we need them in the abstract setting of Lefschetz complexes, we present them with proofs for the sake of completeness.
The presentation of new results starts with Section~\ref{sec:6}, where we introduce the concept of shallow cancellation orders and prove several of their properties. The main new concept of the paper, namely the depth poset, is presented in Section~\ref{sec:7}.
The algorithm for constructing the depth poset is introduced and discussed in Section~\ref{sec:8}. We conclude the paper with Section~\ref{sec:9}
indicating further research directions.

\section{Overview of the Main Results}
\label{sec:2}

This section presents an informal overview of the main results by illustrating the main ideas via examples.
The formal definitions, theorems, and proofs are given in the subsequent sections.
To keep the presentation simple, we limit the homology in this section to using arithmetic modulo 2, 
that is arithmetic in the field $\Zspace_2=\Zspace / 2 \Zspace$. 

\subsection{Filters as Morse Functions}
\label{sec:2.1}

Consider a finite simplicial complex, $X$.
For a simplex $x \in X$, let $\dim{x}$ denote the dimension of $x$. 
Recall that $x$ is a \emph{face} of another simplex $y$ if $x \subseteq y$ and 
$x$ is a \emph{facet} of $y$ if, in addition, $\dim{x} =\dim{y} - 1$. 
Correspondingly, we say $y$ is a \emph{coface} of $x$ and a \emph{cofacet} of $x$, respectively.
Persistent homology---as introduced in~\cite{ELZ02}---is based on the concept of a filter defined as an ordering of all simplices into a sequence in which every simplex succeeds its facets.
In this paper, a \emph{filter} on $X$ is an injective map  
$\filter \colon X \to \Rspace$ such that 
\begin{align}
  \label{eq:filter-monotonicity}
  \filter(x) &< \filter(y) \quad\text{ if $x$ is a facet of $y$.}
\end{align}
We call a function that satisfies \eqref{eq:filter-monotonicity} \emph{monotonic}.
The two definitions are equivalent, in the sense that any monotonic function induces an ordering with the desired property, and the index function of such an ordering is a monotonic function that induces the ordering.
We call a complex with a given filter a \emph{filtered complex}.
 
\smallskip
A key feature of persistent homology (see Section~\ref{sec:5.1} for formal definitions and references) is that $X$ splits into the set $\Birth{X}{}$ of simplices that give birth to homology classes and the set $\Death{X}{}$ of simplices that give death to homology classes. 
Moreover, there is an injective map $\birthmap \colon \Death{X}{} \to \Birth{X}{}$, which leads to  birth-death pairs of the form $(\birthmap{(y)},y)$ for $y\in\Death{X}{}$. 
Simplices that do not belong to any such pair give birth to homology classes of $X$ and thus do not die.

\smallskip
In classical Morse theory, a \emph{Morse function} is a generically smooth real-valued function on a compact manifold.
Its gradient induces a flow on the manifold, with a finite number of hyperbolic stationary points (critical points of the Morse function),
which encode the topology of the manifold via the associated Morse complex.
Forman's combinatorial Morse theory \cite{For98, For98b} defines a (discrete) Morse function as a real-valued function $f \colon X \to \Rspace$ on a simplicial complex---or, more generally, on a regular CW complex---such that \eqref{eq:filter-monotonicity} is satisfied for all facets $x$ of $y$, except possibly for one, and for all cofacets of $x$, except possibly for one.
The exceptions necessarily come in disjoint pairs, referred to as \emph{vectors}.
The \emph{critical} simplices are the ones that do not belong to any pair.
This leads to a \emph{combinatorial flow} or \emph{combinatorial dynamical system}.
Its trajectories are the paths in the directed graph whose vertices are the cells, there is a loop at every critical cell, and a directed edge from every cell to each one of its facets, with the direction reversed if the facet forms a vector with the cell (see Section~\ref{sec:4.1} for the formal description of combinatorial dynamics).
The trajectories that use only loops are \emph{stationary}, and the others are \emph{non-stationary}.
The Morse function $f$ induces a \emph{Lyapunov function}; that is: a function that strictly decreases along non-stationary trajectories. This excludes the existence of periodic trajectories other than stationary ones. 
Hence, we refer to the flow as a \emph{combinatorial gradient} and to the non-stationary trajectories as \emph{gradient paths}.
Like in the classical theory, the resulting combinatorial dynamical system encodes the topology of the complex via the associated Morse complex. 

\smallskip
A filter $\filter \colon X \to \Rspace$ that satisfies \eqref{eq:filter-monotonicity} without any exception is therefore a discrete Morse function with no vectors and only critical cells.
Hence, the induced dynamical system facilitates only trajectories flowing from a cell to its facets.
This may seem uninteresting. 
However, in \cite{DRS15} the following fact was noticed: if we write $\SH{\filter}$ for the set of pairs $(x,y)$ such that $x$ is the last facet of $y$ and $y$ is the first cofacet of $x$ in the filter, then 
any pair $(x,y) \in \SH{\filter}$ is a birth-death pair, i.e.\ $\birthmap(y) = x$.
Moreover, changing the value of $y$ in $\filter$ to $\filter(x)$ produces a discrete Morse function whose vectors are precisely the vectors in $\SH{\filter}$.
As we already mentioned in the introduction, the pairs in $\SH{\filter}$ are called \emph{close pairs} in \cite{DRS15} 
and \emph{apparent pairs} in \cite{Bau21}.  We call them \emph{shallow pairs} for reasons that will become clear later (see Section~\ref{sec:5.3} for the formal exposition of shallow pairs). 

\smallskip
As pointed out in \cite{DRS15}, shallow pairs bridge persistent homology with discrete Morse theory.
In fact, the combinatorial flow induced by the shallow pairs may be viewed as a simplification of the combinatorial flow induced by the filter. 
The simplification of the flow, guided by the values of $\filter$, may be iterated, in parallel with the homology-preserving simplification of space. In this process every birth-death pair eventually becomes shallow; hence, a vector in the gradually simplified flow.
We illustrate this with the following example.
\begin{figure}[htbp]
  \centering
  \resizebox{!}{1.2in}{\input{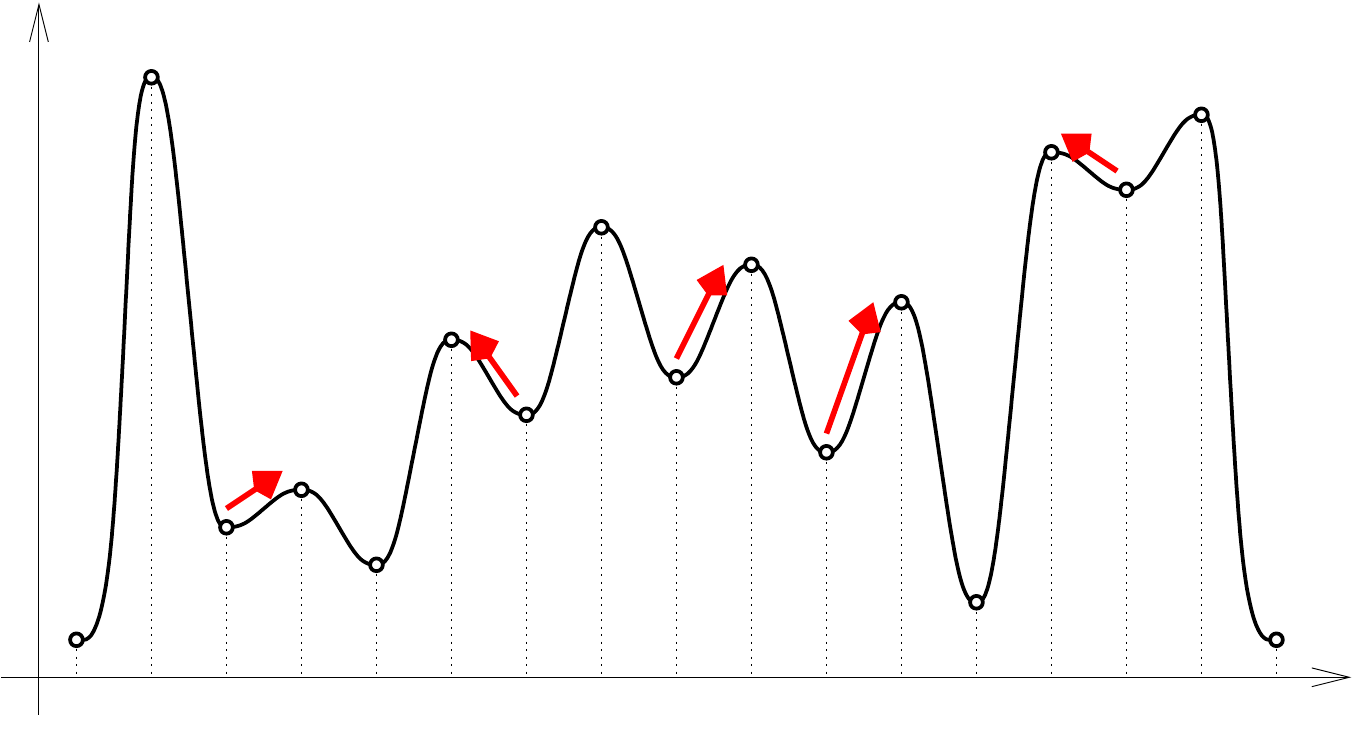_t}}
  \hspace{0.2in}
  \resizebox{!}{1.2in}{\input{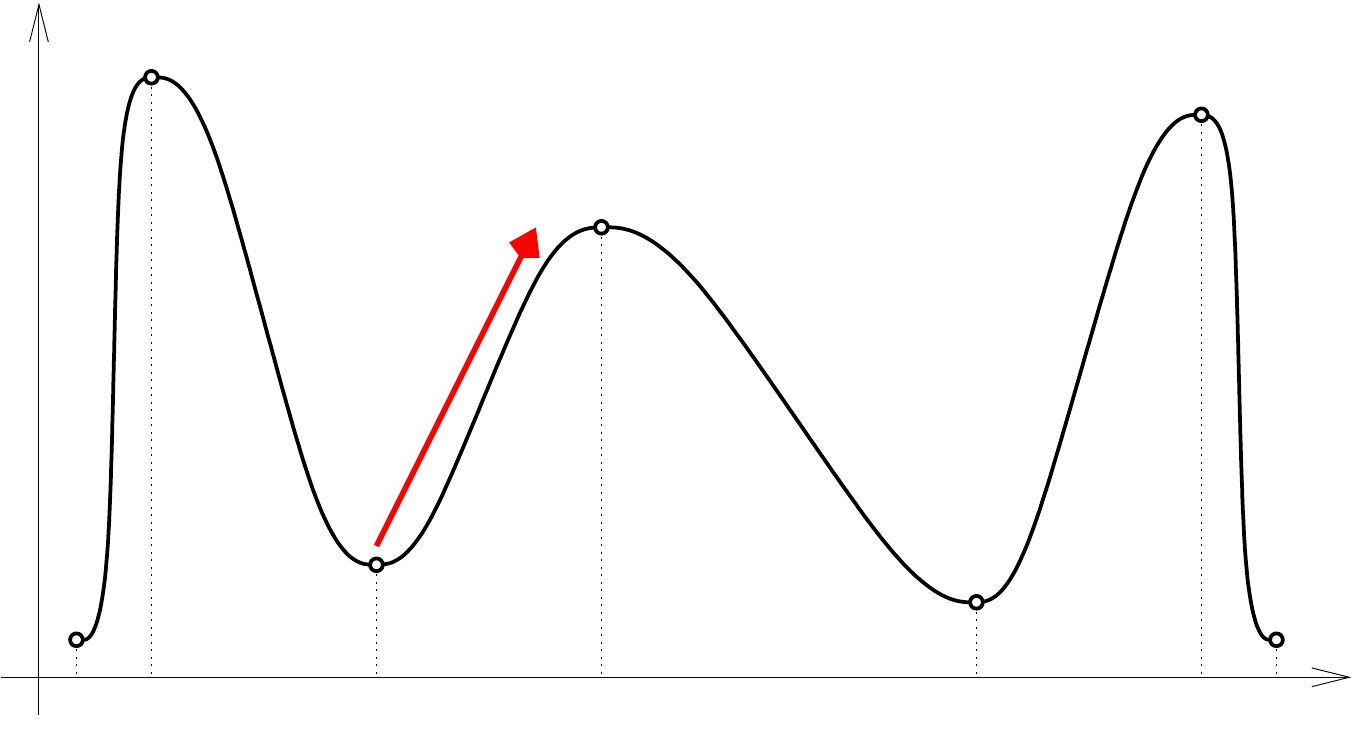_t}} \\
  \vspace{0.15in}
  \hspace{0.05in}
  \resizebox{!}{1.2in}{\input{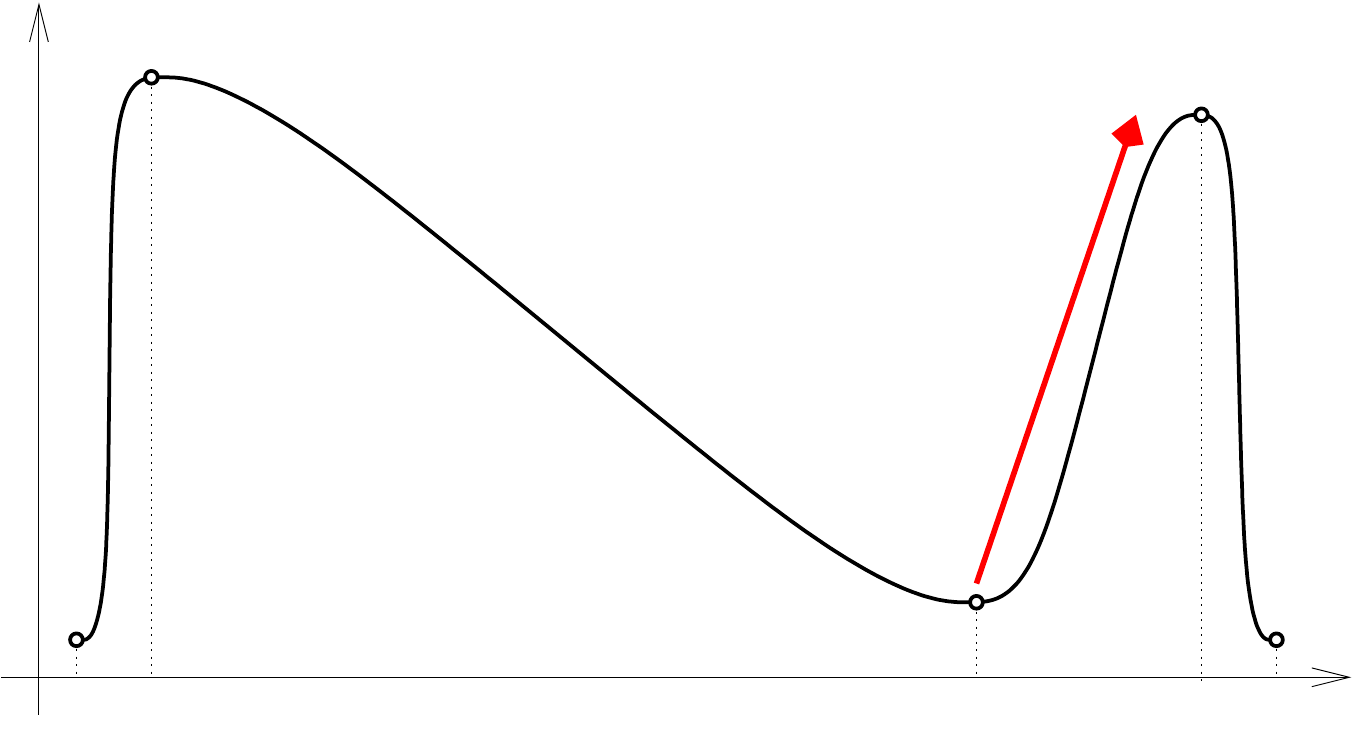_t}}
  \hspace{0.2in}
  \resizebox{!}{1.2in}{\input{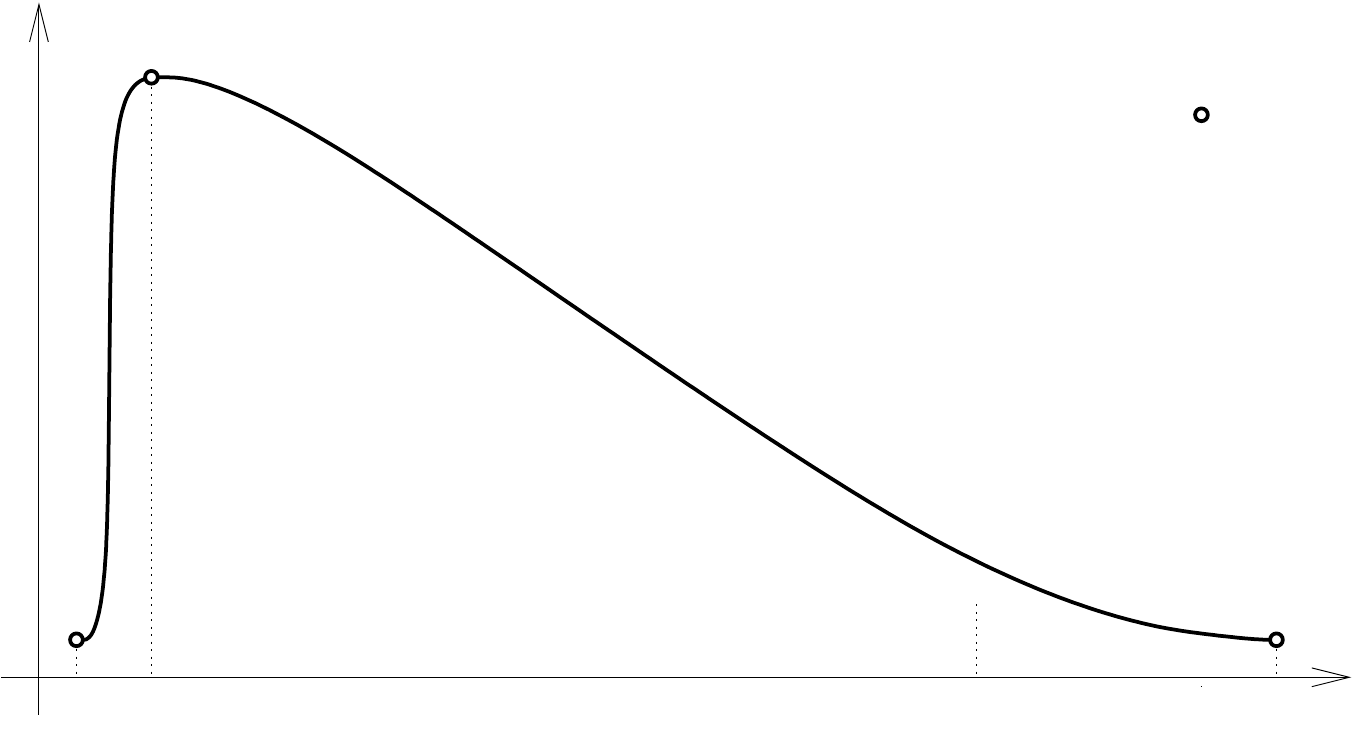_t}}
  \vspace{-0.05in}
  \caption{ \small 
    \emph{Upper left:} a filter on the triangulation of a circle represented as a smooth height function. 
    Shallow pairs (ski lifts) are indicated by arrows.
    \emph{Upper right:} the filter obtained by canceling the shallow pairs of the previous filter. 
    The new filter has its own shallow pairs. 
    \emph{Lower left:} the filter after the second round of cancellations. 
    \emph{Lower right:} the filter after the third and final round of cancellations. 
  }
  \label{fig:skilifts}
\end{figure}
\begin{figure}[htbp]
  \centering
  \resizebox{!}{1.2in}{\input{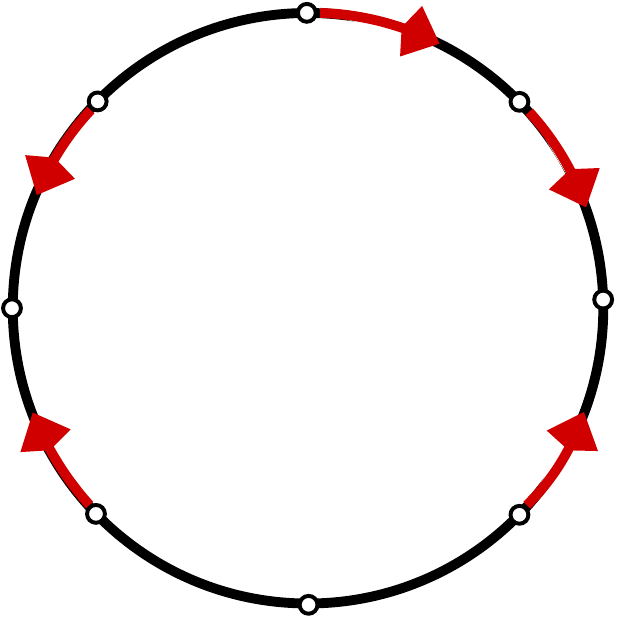_t}}
  \hspace{0.2in}
  \resizebox{!}{1.2in}{\input{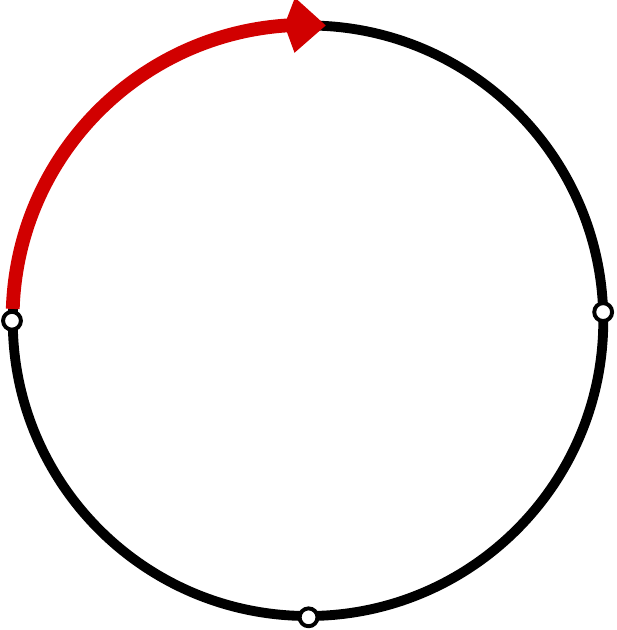_t}}\\
  \vspace*{0.05in}
  \hspace{0.05in}
  \resizebox{!}{1.2in}{\input{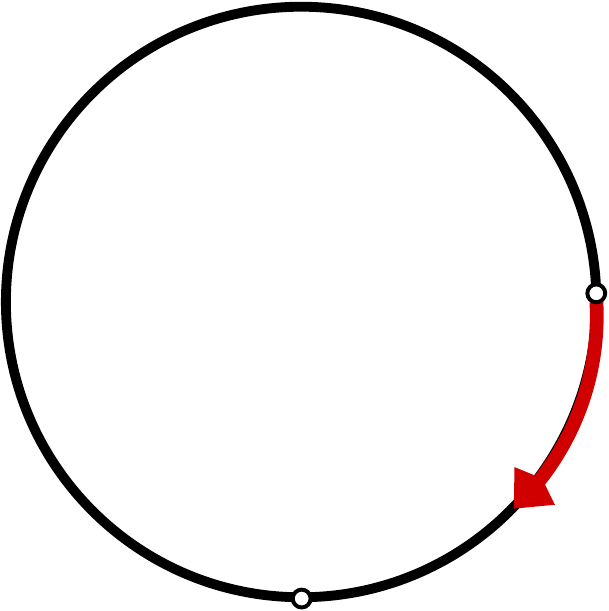_t}}
  \hspace{0.2in}
  \resizebox{!}{1.2in}{\input{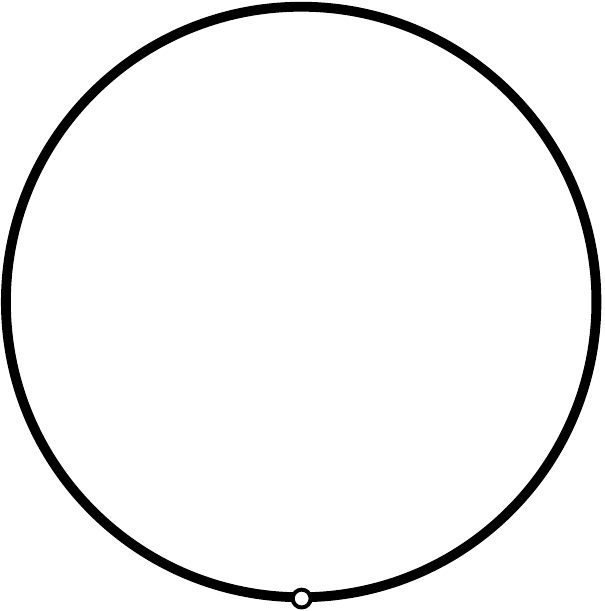_t}}
  \vspace{-0.05in}
  \caption{ \small 
    \emph{Upper left:} the combinatorial gradient (shallow pairs) of the filter in Figure~\ref{fig:skilifts}, marked by red arrows.
    \emph{Upper right:} the cellular decomposition and combinatorial gradient of the induced filter after compressing and stretching the shallow pairs in the upper left circle.
    In particular, the edges $\eb, \ec, \ee, \ef$ are compressed to the vertices $\vc, \vc, \vf, \vg$, respectively, while the edges $\ea, \ed, \eh$ are stretched accordingly.
    \emph{Lower left:} the cellular decomposition and combinatorial gradient of the induced filter after compressing and stretching the edges $\ed, \ea$ in the upper right circle.
    \emph{Lower right:} the cellular decomposition after compressing and stretching the edges $\eh, \ea$.
    Only the vertex $\va$ and the edge $\ea$ are left.
  }
  \label{fig:skilifts-cancellations}
\end{figure}
\begin{example}[ski lifts 1: filter] 
\label{ex:ski_lifts_1}
  To be continued in Ex.~\ref{ex:ski_lifts_2}.
  Represent a circle, denoted $\Circle$, by a segment that stretches from a point ${ \va}$ on the left to another point on the right, with the two endpoints identified.
  Subdividing the segment into eight edges ${ \ea, \eb, \ldots, \eh}$, we obtain a triangulation $X$ of the circle in the form of an octagon. 
  The sequence of vertices and edges,
  $$
    { \va, \vg, \vc, \vb, \eb, \vf, \vd, \ve, \ec, \ef, \ee, \ed, \vh, \eg, \eh, \ea,}
  $$
  is easily seen to be a filter on $X$.
  The upper left panel of Figure~\ref{fig:skilifts} visualizes this filter as a map $\filter \colon X \to \Rspace$.
  In the drawing, the map $\filter$---formally defined over the finite collection $X$ of vertices and edges---is smoothed to a map $\filter \colon \Circle \to \Rspace$ to improve the visualization. 
  In particular, this lets us think of the filter as a mountain range in the winter, with skiers populating its slopes.
  A skier who uses only the force of gravity can descend from a peak to one of the two adjacent valleys, but this is where the journey ends, despite the wealth of possible routes.
  The situation improves if there is a ski lift that leads to a neighboring peak.
  Assume that the cost of constructing such a lift increases with the height difference. 
  Then we build the lift if it meets both of the following two criteria: (i) it is the less expensive of the two possible lifts from the valley, and (ii) it is the less expensive of the two possible lifts to the peak.
  In our example, the location of the resulting lifts is indicated by arrows in the upper left panel of Figure~\ref{fig:skilifts}.
  These valley-peak pairs thus connected by lifts are precisely the \emph{shallow pairs}.

  \smallskip
  From the skier's viewpoint, the lifts change the geometry of the mountain range as she can now reach further from most peaks.
  For example, from the peak labeled { \ed}, she can now reach all the way to the valleys labeled { \vc} and { \vg}, but not yet beyond.
  In terms of the complex, the change of the geometry is achieved via compressing combined with stretching the edges along the shallow pairs. This may be interpreted as leveling the valleys with the peaks along the lifts; see the upper right panel in Figure~\ref{fig:skilifts} for the outcome of this operation. 
  The compressing combined with stretching simplifies the triangulation of the circle  to only three vertices and three edges; see the upper two panels of Figure~\ref{fig:skilifts-cancellations}. 
  In addition, the graph of the filter is simplified: its peaks and valleys correspond to  the ones without lifts in the original graph.
  The dynamics simplifies as well, because the number of possible routes decreases.
  We iterate the construction of lifts and this way further extend the reach of our skier by simplifying the geometry and dynamics; see the bottom panels in Figures~\ref{fig:skilifts} and~\ref{fig:skilifts-cancellations}.
  The iteration ends with a single valley and a single peak corresponding to a single vertex and a single edge in the compressed representation of the circle as a complex. 
  In the original mountain range, every valley can now be reached from the remaining peak using the constructed lifts.
  In the compressed complex the filter becomes a perfect Morse function (see~\cite{AFV12}): its critical cells are in one-to-one correspondence to the homology of the complex.
  \exend
\end{example}

\begin{figure}[htbp]
  \centering
  \centering
  \resizebox{!}{0.3in}{\input{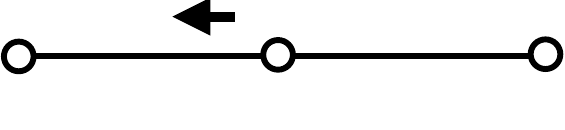_t}}
  \hspace{0.1in}
  \resizebox{!}{0.3in}{\input{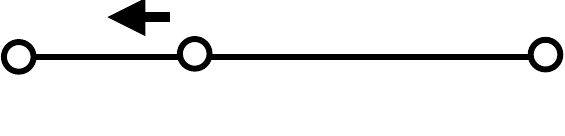_t}}
  \hspace{0.1in}
  \raisebox{1mm}{\resizebox{!}{0.2in}{\input{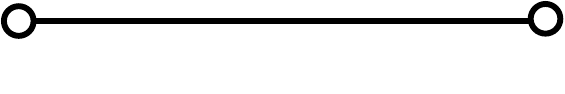_t}}}\\[3mm]
  \resizebox{!}{1.0in}{\input{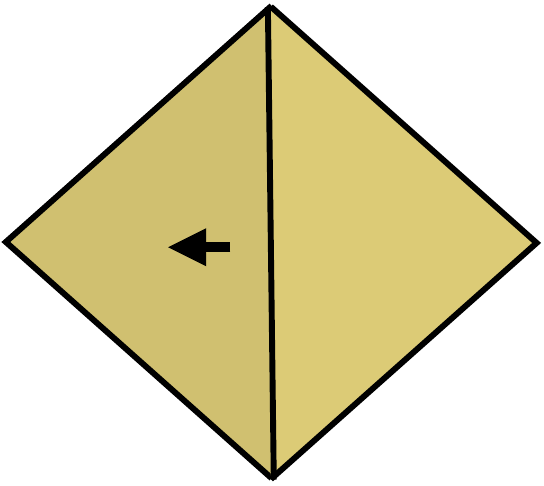_t}}
  \hspace{0.15in}
  \resizebox{!}{1.0in}{\input{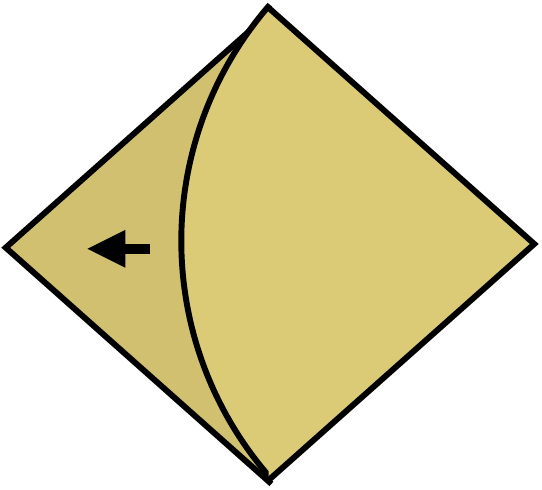_t}}
  \hspace{0.15in}
  \resizebox{!}{1.0in}{\input{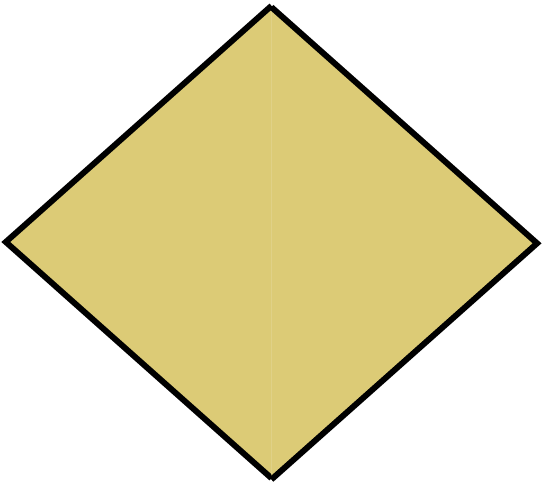_t}}
  \caption{\footnotesize 
   Internal collapses. \emph{Top row from left to right:} vertex $\vb$ gets pushed towards vertex $\va$ compressing edge $\ea$ 
   and stretching edge $\eb$.
   \emph{Bottom row from left to right:} edge $\ee$ gets pushed towards edges $\ea$ and $\eb$ compressing triangle $\fa$ 
   and stretching triangle $\fb$.
  }
  \label{fig:internal-collapse}
\end{figure}

\subsection{Cancellation of Shallow Pairs}
\label{sec:2.2}%

Example~\ref{ex:ski_lifts_1} shows that, at least in some cases, it is possible to eliminate all birth-death pairs and eventually reach a perfect Morse function. 
However, in a general situation, repeating such a process may encounter obstacles.
The construction of lifts may be viewed as cancellations in combinatorial Morse theory carried out by inverting the gradient paths \cite{For98}.
The compressing and stretching of edges may be viewed as cancellations based on internal collapses \cite{Fe26}, which extends the concept of collapses in simple homotopy theory \cite{Whitehead38}.
The internal collapse of an edge and a triangle is visualized in Figure~\ref{fig:internal-collapse}.
Collapses based on inverting paths preserve the space and collapses based on internal collapses preserve the homotopy type. 
If the goal is to merely preserve the homology, we may relax some of the restrictions inherent in these operations.
For example, if we simplify a filter on a CW complex representing  the Poincaré sphere, we cannot cancel all its birth-death pairs, because the Poincaré sphere and the standard $3$-dimensional sphere have isomorphic homology groups but different homotopy types.
To overcome this difficulty we need a more algebraic setting, but with enough geometry to allow for filters and combinatorial dynamics. 
We achieve this using Lefschetz complexes \cite{Lef42} instead of CW or simplicial complexes, and cancellations based on chain homotopies.

\smallskip
A \emph{Lefschetz complex} (see Section~\ref{sec:3.3} for the formal definition) may be viewed as a fixed basis of a free chain complex in which each element of the basis is considered a cell in its own right, and the geometry of the set of cells is encoded in the facet relation, defined via non-zero entries in the matrix of the boundary homomorphism. 
In particular, a CW complex viewed as a basis of the associated chain complex, is a Lefschetz complex whose homology coincides with the cellular homology. 
In the case of a regular CW complex and modulo 2 arithmetic, the geometry and algebra of the Lefschetz complex agree, because then the facet relation in the CW complex is encoded precisely in the non-zero entries of the matrix of the boundary homomorphism; hence the facets in the CW complex and the facets in the induced Lefschetz complex coincide.
\begin{figure}[htbp]
  \centering
  \vspace{-0.1in}
  \resizebox{!}{1.3in}{\input{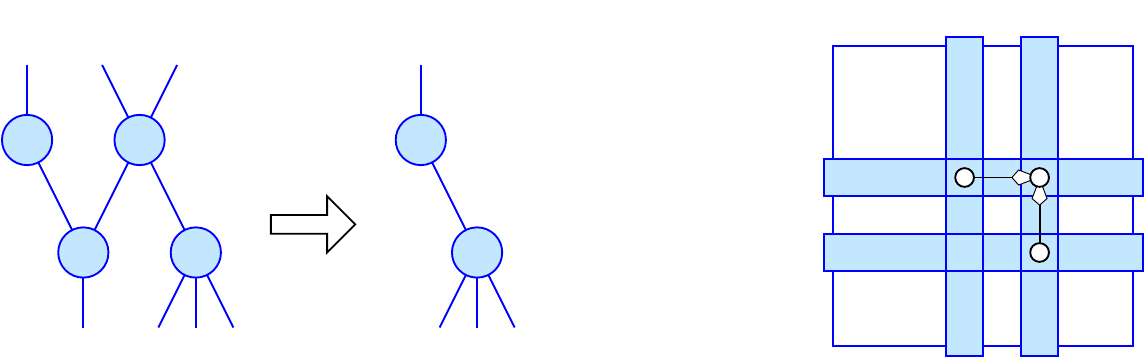_t}}
  \vspace{-0.05in}
  \caption{\footnotesize 
  Cancellations in Lefschetz complexes as a homology oriented replacement for reversing gradient paths and internal collapses.
  \emph{ Left panel:} the effect of canceling a cell $t$ with its facet $s$.
  \emph{ Right  panel:} the same cancellation visualized in the boundary matrix.
  If in addition $x$ were also incident to $y$, then the cancellation would have removed this incidence, leaving $y$ without child and $x$ without parent (not shown).}
  \label{fig:cancellation}
\end{figure}
In the setting of Lefschetz complexes with $\Zspace_2$ coefficients, the cancellation of a pair $(s,t)$, in which $s$ is a facet of $t$, consists in removing $s$ and $t$ from the complex and modifying  the facet relation so that cells adjacent to each other indirectly via $s$ and $t$ become neighbors, as illustrated in Figure~\ref{fig:cancellation} (see Definition~\ref{dfn:cancellation} for the formal description of cancellation in a Lefschetz complex).
The cancellation produces a new Lefschetz complex with the same homology (Theorem~\ref{thm:cancellations_preserve_homology}). 
We illustrate these concepts on two examples presenting regular CW complexes interpreted as Lefschetz complexes.

\begin{figure}[ht]
  \centering \vspace{0.1in}
  \resizebox{!}{1.6in}{\input{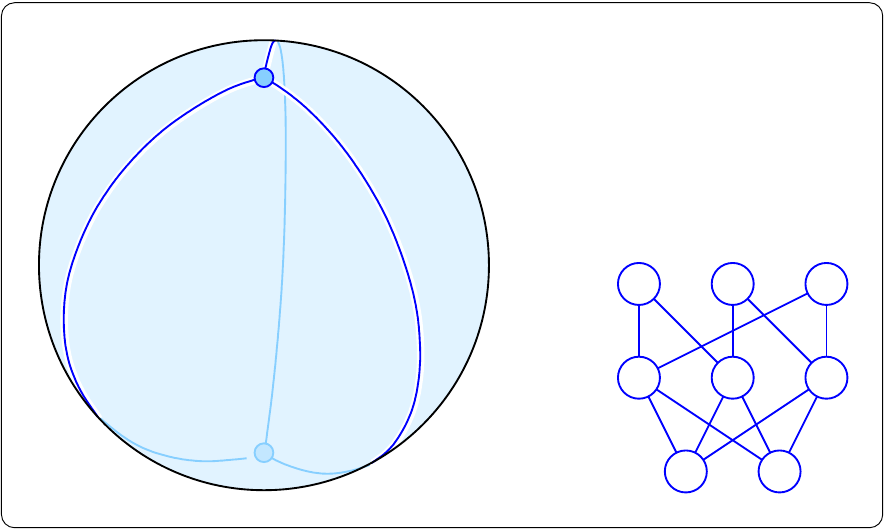_t}}
  \vspace{-0.0in}
  \caption{ \small \emph{Left:} a division of the $2$-sphere into three regions, in which $\va,\vb$ are the vertices at the north-pole and south-pole, $\ea,\eb,\ec$ are the arcs that connect the poles, and $\fa, \fb, \fc$ are the thus created regions.
  \emph{Right:} the facet relation of the division.}
  \label{fig:3division}
\end{figure}
\begin{example}[sphere: canceling in pairs]
  \label{ex:sphere_1}
  In Figure~\ref{fig:3division}, we see a regular CW complex decomposing the sphere into three regions.
  It has two $0$-cells, $\va,\vb$, three $1$-cells, $\ea,\eb,\ec$, and three $2$-cells, $\fa, \fb, \fc$.  
  We have non-zero incidence coefficients for the pairs $(\ea, \fc)$, $(\eb, \fc)$, $(\ec, \fb)$, $(\ea,\fb)$, $(\eb,\fa)$, $(\ec,\fa)$, $(\va,\ec)$, $(\vb,\ec)$, $(\va,\eb)$, $(\vb,\eb)$, $(\va,\ea)$, $(\vb,\ea)$.
  Forgetting the geometry of the individual cells, and keeping only their dimensions and facet relations, we obtain a Lefschetz complex.
  It can be checked that the conditions in the formal definition of a Lefschetz complex (Definition~\ref{dfn:Lefschetz_complex}) are satisfied, and that this construction works for any regular CW complex. 
  Figure~\ref{fig:2division} shows the Lefschetz complexes obtained after canceling the pairs of cells $({ \vb}, { \ea})$ and $({ \ec}, \fa)$ first and $({ \eb}, \fb)$ later. The visualizations are in the form of CW complexes but the complexes are no longer regular; thus facet relations cannot be deduced as in the case of regular CW complexes. 
  This is where algebra and geometry split.
  The Lefschetz complex is oblivious of the geometry of the cells, and facet relations may appear or disappear in the process of cancellations.
  For example, after the first two cancellations, ${ \va}$ is no longer a facet of ${ \eb}$.
  In the end, there are two boundaryless cells, one of dimension $0$ and one of dimension $2$, indicating the homology groups of the original complex. 
  \exend
\end{example}
\begin{figure}[hbt]
  \centering \vspace{0.1in}
  \resizebox{!}{1.6in}{\input{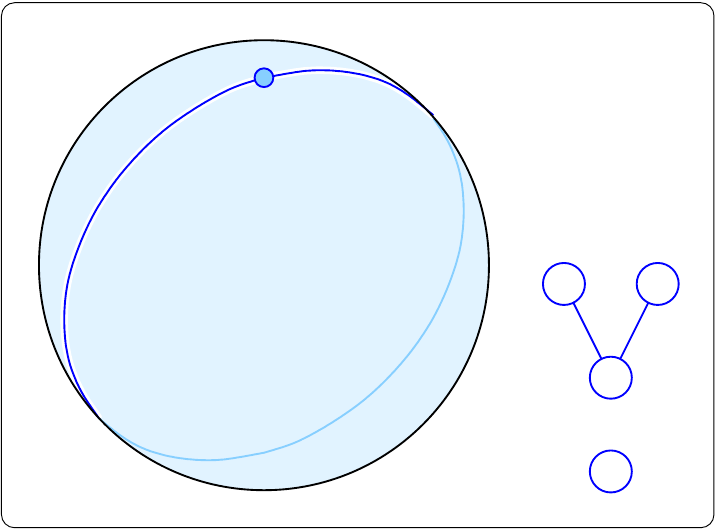_t}}
  \hspace{0.1in}
  \resizebox{!}{1.6in}{\input{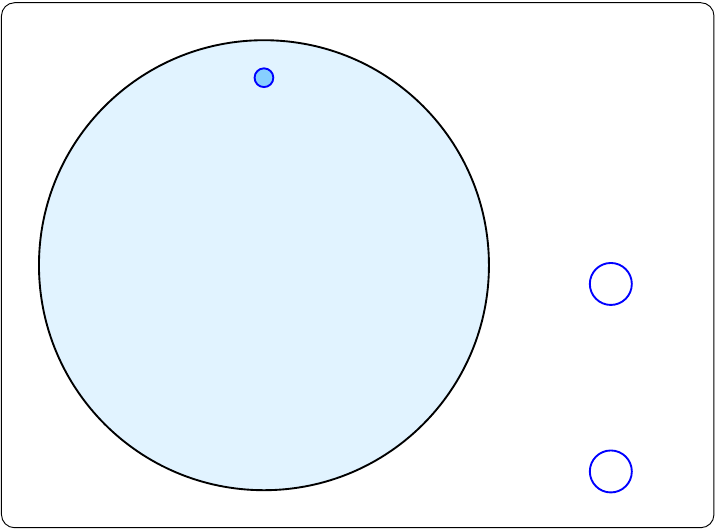_t}}
  \vspace{-0.0in}
  \caption{ \small \emph{Left:} the cancellation of the pairs $({ \vb}, { \ea})$ and $({ \ec}, \fa)$ simplifies the 3-division of the sphere in Figure~\ref{fig:3division} into a 2-division with a single vertex.
  \emph{Right:} the further cancellation of the pair $({ \eb}, \fb)$ leaves only two boundaryless cells.}
  \label{fig:2division}
\end{figure}
\begin{figure}[ht]
  \centering
  \resizebox{!}{1.55in}{\input{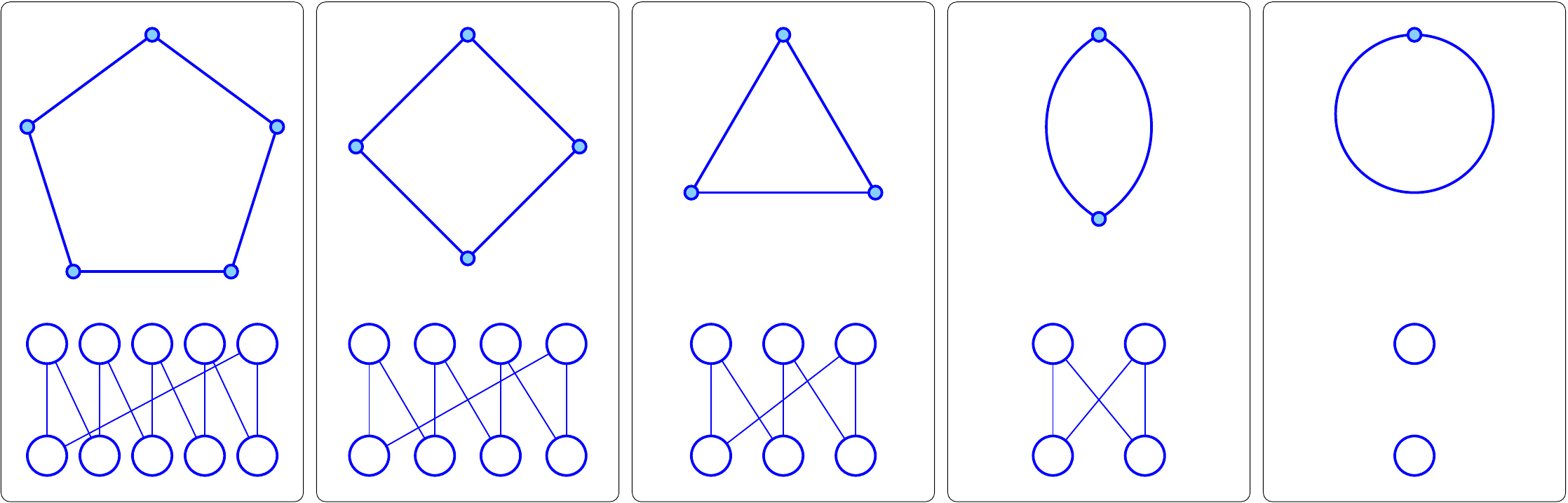_t}}
  \caption{\small \emph{Left panel:} a pentagon triangulating the circle with $5$ edges and $5$ vertices above its Hasse diagram.
  By canceling $ (\va,\ea)$, $ (\vb,\eb)$, $ (\vc,\ec)$, and $ (\vd,\ed)$, in this order, we get the Lefschetz complexes and their Hasse diagrams in the remaining four panels.}
  \label{fig:penta}
\end{figure}
\begin{example}[pentagon 1: cancellations]
  \label{ex:pentagon_1}
  To be continued in Ex.~\ref{ex:pentagon_2}].
  Consider a simplified version of Example~\ref{ex:ski_lifts_1}: a pentagon rather than an octagon triangulating the circle.
  Figure~\ref{fig:penta} simplifies this pentagon in a series of cancellations.
  The quadrangle, triangle, bi-gon, and loop in the four panels following the first are obtained by canceling the pairs $ (\va, \ea), (\vb, \eb), (\vc, \ec), (\vd, \ed)$, in this sequence.
  Below the complexes, we show the Hasse diagrams of the associated face posets.
  The two cells in the last panel generate the homology groups of the original complex and thus of the circle.
\exend  
\end{example}

The concepts of a filter and the induced persistent homology carry over to Lefschetz complexes; see Section~\ref{sec:5.1} for the details of persistent homology in the setting of Lefschetz complexes.
Here we illustrate some of these ideas with the assignment of values to the vertices and edges of the pentagon in the first panel of Figure~\ref{fig:penta-3}.
\begin{figure}[ht]
  \centering
  \resizebox{!}{1.25in}{\input{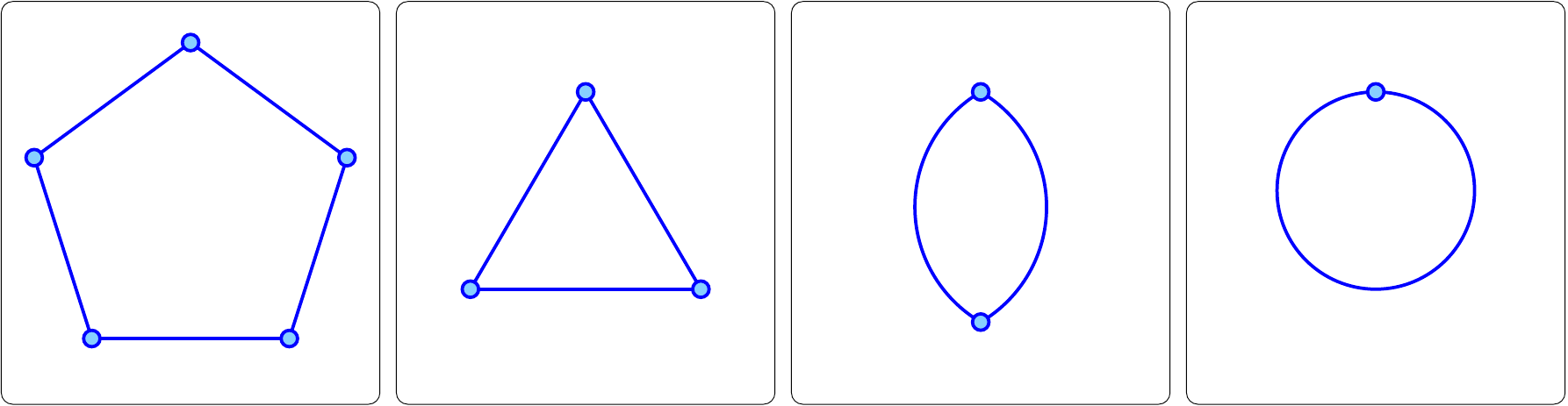_t}}
  \caption{\small \emph{Left panel:} the pentagon with filter values marked at the five edges and five vertices.  The induced ordering of the cells is $ \ve$, $ \vb$, $ \va$, $ \vc$, $ \vd$, $ \ec$, $ \eb$, $ \ea$, $ \ed$, $ \ee$.
  \emph{The sequence of four panels:} the complexes obtained by repeatedly canceling all shallow pairs.
  Note that the complexes are different from the ones in Figure~\ref{fig:penta} because the canceled pairs are not the same.}
  \label{fig:penta-3}
\end{figure}

\begin{example}[pentagon 2: birth-death pairs]
  \label{ex:pentagon_2}
  Continues Ex.~\ref{ex:pentagon_1} and to be continued in Ex.~\ref{ex:pentagon_3}.
  The matrix of the boundary homomorphism in $\Zspace_2$ coefficients, with the rows and columns sorted according to the values shown in the first panel of Figure~\ref{fig:penta-3}, and a reduced form of this matrix are
  \[
    \Bd{} =
    \bordermatrix{
          & \scst { \ec} & \scst { \eb} & \scst { \ea} & \scst { \ed} & \scst { \ee} \cr
        \scst { \ve} & \scst 0 & \scst 0 & \scst 0 & \scst 1 & \scst 1 \cr
        \scst { \vb} & \scst 0 & \scst 1 & \scst 1 & \scst 0 & \scst 0 \cr
        \scst { \va} & \scst 0 & \scst 0 & \scst 1 & \scst 0 & \scst 1 \cr
        \scst { \vc} & \scst 1 & \scst 1 & \scst 0 & \scst 0 & \scst 0 \cr
        \scst { \vd} & \scst 1 & \scst 0 & \scst 0 & \scst 1 & \scst 0
    }
    \;
    \quad
    \;
    \Rho =  
        \bordermatrix{
          & \scst { \ec} & \scst { \eb} & \scst { \ea} & \scst { \ed} & \scst { \ee} \cr
        \scst { \ve} & \scst 0 & \scst 0 & \scst 0 & \scst 1 & \scst 0 \cr
        \scst { \vb} & \scst 0 & \scst 1 & \scst 1 & \hlightg{\scst 1} & \scst 0 \cr
        \scst { \va} & \scst 0 & \scst 0 & \hlightg{\scst 1} & \scst 0 & \scst 0 \cr
        \scst { \vc} & \scst 1 & \hlightg{\scst  1} & \scst 0 & \scst 0 & \scst 0 \cr
        \scst { \vd} & \hlightg{\scst 1} & \scst 0 & \scst 0 & \scst 0 & \scst  0
    }.
  \]
  A matrix is \emph{reduced} if the pivots (the lowest non-zero entries in nonzero columns) are in pairwise different rows. 
  The matrix $\Rho$ above results from applying \cite[Algorithm 2]{dSMV11} to reduce $\Bd{}$.
  This algorithm searches for pivots from bottom to top and reduces extra pivots by left-to-right column additions. 
  The final pivots indicate the birth-death pairs. 
  The pivots in $\Rho$ are marked in green and the corresponding birth-death pairs are $\BD{\filter} = \{ ({ \vd},{ \ec}), ({ \vc},{ \eb}), ({ \va},{ \ea}), ({ \vb},{ \ed}) \}$. 
  The vertex $\ve$ and edge $\ee$ are left unmatched and represent the generators of the degree-$0$ and degree-$1$ homology groups of $X$.
  \exend  
\end{example}

Cancellations in Lefschetz complexes are used in~\cite{MisNan13} to reduce the number of operations needed to calculate the persistence diagram of a, generally non-injective, filter.
These cancellations are applied to vectors of an auxiliary combinatorial gradient
constructed in such a way that its vectors do not cross level sets of the filter. 
Since we work with injective filters, such a construction is not useful in the setting of this paper. 
Instead, we focus on shallow pairs, whose cancellations in Lefschetz complexes have important advantages:
\smallskip \begin{itemize}
  \item The cancellation of a shallow pair preserves not only the homology but also the other birth-death pairs (Theorem~\ref{thm:canceling_a_shallow_pair}). 
  A non-shallow birth-death pair consisting of a cell and its facet may also be canceled, but then the collection of birth-death pairs may change (see Example~\ref{ex:pentagon_6}).
  Our approach differs from~\cite{MisNan13}, where the cancellations of vectors preserve only the values of the births and deaths; that is: the persistence diagram.
    
  \item After a suitable number of iterations, every birth-death pair becomes a shallow pair and gets canceled (see Section~\ref{sec:6}, in particular Theorems~\ref{thm:canceling_a_shallow_pair} and~\ref{thm:grading_of_birth-death_pairs}).
  This is in contrast to the framework in \cite{DRS15}, where the cancellation of a shallow pair---performed by flow reversal---is possible only in the case of a unique gradient path between the cells, which limits the possibilities.
\end{itemize} \smallskip
These two properties are the crucial new ingredients of the work reported in this paper. 
In the next section we elaborate on their meaning.

\subsection{The Depth Poset}
\label{sec:2.3}

When we cancel all vectors in a combinatorial gradient, the resulting Lefschetz complex is independent of the order of the cancellations (Theorem~\ref{thm:global_cancellation}).
Furthermore, shallow pairs are birth-death pairs (Proposition~\ref{prop:shallow_implies_birth-death}), they form a combinatorial gradient (Proposition~\ref{prop:shallow_pairs_form_combinatorial_gradient}), 
and canceling one of them does not change the other birth-death pairs (Theorem~\ref{thm:canceling_a_shallow_pair}).
Hence, we can cancel all shallow pairs at once, and then repeat the process.
This will make some non-shallow birth-death pairs shallow (Proposition~\ref{prop:shallow_pairs_exist}). 
The number of birth-death pairs decreases with each iteration (Theorem~\ref{thm:canceling_a_shallow_pair}), so the process stops after some number $k$ of iterations.
Thus, we obtain a sequence of filters:
\begin{align}
  \filter_j \colon X_{j} \to \Rspace, \mbox{\rm ~~~for~} 0 \leq j \leq k,
\end{align}
in which $\filter_0 \colon X_0 \to \Rspace$ is the original filter $\filter \colon X \to \Rspace$, and $\filter_j \colon X_{j} \to \Rspace$ is the restriction of  $\filter_{j-1} \colon X_{j-1} \to \Rspace$ to the complex $X_{j}$ resulting from canceling all shallow pairs of filter $\filter_{j-1}$.
Denote by $\BD{\filter}$ the set of birth-death pairs of filter $\filter$.
Then, we get a partition of the birth-death pairs of the initial filter into shallow pairs of the filters that arise during the iteration:
\begin{equation}
  \BD{\filter} = \SH{\filter_0} \sqcup \SH{\filter_1} \sqcup \ldots \sqcup \SH{\filter_{k-1}}. 
\end{equation}
The partition reveals a gradation in the set of birth-death pairs: 
we say that a birth-death pair $\chi\in\BD{\filter}$ has \emph{depth $j$} if $\chi\in\SH{\filter_j}$.
In particular, shallow pairs have depth zero.
See Corollary~\ref{cor:grading_of_birth-death_pairs} for more details.

\begin{example}[ski lifts 2: grading by depth] 
  \label{ex:ski_lifts_2}
  Continues Ex.~\ref{ex:ski_lifts_1} and to be continued in Ex.~\ref{ex:ski_lifts_3}.
  Consider the filter in Example~\ref{ex:ski_lifts_1}, visualized as the graph of the $1$-dimensional function in Figure~\ref{fig:skilifts}.
  There are eight minima and eight maxima, and since all but the global minimum and the global maximum form pairs, we have seven birth-death pairs.
  The persistent homology algorithm applied to this filter yields seven birth-death pairs:
  \begin{equation}
  \label{eq:ski-lifts-bd-pairs}
    (\vb,\eb),(\vd,\ec),(\vf,\ef),(\ve,\ee),(\vh,\eg),(\vc,\ed),(\vg,\eh).
  \end{equation}
  The first five are shallow, $(\vc,\ed)$ is shallow after canceling the first five pairs, and $(\vg,\eh)$
  gets shallow after canceling all its predecessors.
  Hence, $(\vc,\ed)$ has depth one and $(\vg,\eh)$ has depth two.
  All other pairs have depth zero.
  \exend  
\end{example}

\begin{example}[pentagon 3: grading by depth] 
  \label{ex:pentagon_3}
  Continues Ex.~\ref{ex:pentagon_2} and to be continued in Ex.~\ref{ex:pentagon_4}.
  Returning to the filter $\filter$ in Example~\ref{ex:pentagon_2}, we start by canceling its shallow pairs, which are $({ \vd}, { \ec})$ and $({ \va}, { \ea})$.
  The order in which we cancel them does not matter (Theorem~\ref{thm:global_cancellation}), so we get the triangle whose sole shallow pair is $(\vc, \eb)$, as displayed in the second panel of Figure~\ref{fig:penta-3}.
  Canceling this pair gives a bi-gon, with shallow pair $ (\vb,\ed)$; see the third panel, and canceling that gives a loop with a single vertex; see the last panel in the same figure.
  Since the vertex appears twice in the boundary of the loop---and by modulo-2 arithmetic not at all---this final complex is boundaryless.
  Below we show the boundary matrices of the pentagon, the triangle, and the bi-gon, and highlight the entries that correspond to the shallow pairs:
  \[
        \Bd{}_0 =
    \bordermatrix{
          & \scst  { \ec} & \scst  { \eb} & \scst  { \ea} & \scst  { \ed} & \scst  { \ee} \cr
        \scst { \ve} & \scst  0 & \scst  0 & \scst  0 & \scst  1 & \scst  1 \cr
        \scst { \vb} & \scst  0 & \scst  1 & \scst  1 & \scst  0 & \scst  0 \cr
        \scst { \va} & \scst  0 & \scst  0 & \hlight{\scst 1} & \scst  0 & \scst  1 \cr
        \scst { \vc} & \scst  1 & \scst  1 & \scst  0 & \scst  0 & \scst  0 \cr
        \scst { \vd} & \hlight{\scst 1} & \scst  0 & \scst  0 & \scst  1 & \scst  0
    },
    \Bd{}_1 =
        \bordermatrix{
          & \scst  { \eb} & \scst  { \ed} & \scst  { \ee} \cr
        \scst { \ve} & \scst  0 & \scst  1 & \scst  1 \cr
        \scst { \vb} & \scst  1 & \scst  0 & \scst  1\cr
        \scst { \vc} & \hlight{\scst 1} & \scst  1 & \scst  0 \cr
    },
    \Bd{}_2 =  
        \bordermatrix{
          & \scst { \ed} & \scst { \ee} \cr
        \scst { \ve} & \scst 1 & \scst 1 \cr
        \scst { \vb} & \hlight{\scst 1} & \scst 1 \cr
    }.
  \]
  The resulting gradation of the birth-death pairs of $\filter = \filter_0$ is therefore 
  \[
    \BD{\filter} = \SH{\filter_0} \sqcup \SH{\filter_1} \sqcup \SH{\filter_2} ,
  \]
  in which $\SH{\filter_0} = \{(\vd,\ec), (\va,\ea)\}$, $\SH{\filter_1} = \{9\vc,\eb)\}$, and $\SH{\filter_2} = \{(\vb,\ed)\}$.
  \exend  
\end{example}

We ask what cancellations are necessary to make a pair shallow.
To answer this question, we introduce the notion of a \emph{shallow cancellation order} or \emph{shallow order} for short.
It is a total order on $\BD{\filter}$ such that each other pair is shallow at the time of its cancellation; see Section~\ref{sec:6.3} for a detailed discussion of this concept.
Some of the possible shallow cancellation orders for $\BD{\filter}$ in Example~\ref{ex:ski_lifts_2} are
\begin{align*}
  & (\vb,\eb),(\vd,\ec),(\vf,\ef),(\vh,\eg),(\ve,\ee),(\vc,\ed),(\vg,\eh), \\
  & (\vb,\eb),(\vd,\ec),(\vf,\ef),(\ve,\ee),(\vc,\ed),(\vg,\eh),(\vh,\eg), \\
  & (\vd,\ec),(\vb,\eb),(\vf,\ef),(\ve,\ee),(\vh,\eg),(\vc,\ed),(\vg,\eh).
\end{align*} 
The second of these orders tells us that we do not need to cancel $(\vh,\eg)$ to make $(\vc,\ed)$ shallow; it is sufficient to cancel $(\vb,\eb), (\vd,\ec), (\vf,\ef), (\ve,\ee)$.
This observation leads us to the main concept of this paper: the \emph{depth poset} of a filter, denoted $\Depth{\filter}$.
It is defined as the intersection of all possible shallow cancellation orders; see Definition~\ref{dfn:depth_poset} for the formal introduction of the concept.
Figure~\ref{fig:poset} displays the depth poset of the filter in Example~\ref{ex:ski_lifts_1}.
\begin{figure}[ht]
  \centering \vspace{0.05in}
  \resizebox{!}{1.2in}{\input{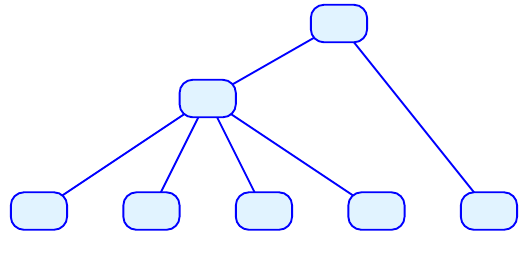_t}}
  \vspace{-0.1in}
  \caption{ \small The depth poset on the birth-death pairs of the filter in Example~\ref{ex:ski_lifts_1}.
  The depth of a node (a birth-death pair) is one less than the length of the longest path connecting it to a minimal node.}
  \label{fig:poset}
\end{figure}

\smallskip
The depth poset encodes how different birth-death pairs relate to each other during the cancellation of shallow pairs.
In particular, it has the following properties:
\begin{itemize}
  \item a pair $(x,y) \in \BD{\filter}$ is shallow iff it is minimal in $\Depth{\filter}$ (Theorem~\ref{thm:shallow_iff_minimal});
    
  \item birth-death pairs in a down set of $\Depth{\filter}$ may be cancelled while preserving the other birth-death pairs and their dependencies in the depth poset (Propositions~\ref{prop:cancelable_sets} and~\ref{prop:depth-poset-after-set-cancellation});
  
  \item in order to make a pair $(x,y) \in \BD{\filter}$ shallow, it is necessary and sufficient to cancel the down set of all its predecessors in $\Depth{\filter}$ (Theorem~\ref{thm:becoming_shallow}).
\end{itemize} \smallskip
For example, the following information can be read from the depth poset in Figure~\ref{fig:poset}: the shallow pairs of $\filter$ are $(\vb,\eb), (\vd,\ec), (\ve,\ee), (\vf,\ef), (\vh,\eg)$, the pair $(\vc,\ed)$ becomes shallow only after these four pairs are canceled, and $(\vg, \eh)$ becomes shallow only after all five pairs are canceled.

\subsection{Computing the Depth Poset}
\label{sec:2.4}%

Recall that the depth poset, $\Depth{\filter}$, is the intersection of all shallow cancelable orders.
However, as explained in Section~\ref{sec:8}, we can avoid enumerating all cancelable orders to find $\Depth{\filter}$.
In fact, the depth poset can be computed from two reductions of the matrix of the boundary homomorphism with columns and rows ordered by the filter. 
The algorithms that reduce this matrix produce two generally overlapping subsets of relations in the depth poset: those that compare the births of pairs and those that compare deaths; see Equations~\eqref{eq:birth-relation} and~\eqref{eq:death-relation}.
The first algorithm performs column reductions and looks for pivots from the bottom to the top, while the symmetric second algorithm performs row operations and looks for pivots from the left to the right (Algorithms~\ref{alg:bottom_to_top_column_reduction} and $\ref{alg:left_to_right_row_reduction}$).
The proof of correctness is non-trivial and given in Theorem~\ref{thm:correctness_of_algorithm}).
We illustrate how the algorithms work by calculating the depth poset of the filter of the montain range with eight vertices and eight edges in Example~\ref{ex:ski_lifts_1}.
\begin{example}[ski lifts 3: matrix reduction] 
  \label{ex:ski_lifts_3}
  Continues Ex.~\ref{ex:ski_lifts_2}.
  The first algorithm starts with the leftmost entry in the lowest non-zero row in the matrix, reduces all other columns with a non-zero entry in the same row, removes the column and row of the pivot, and iterates.
  A column operation induces a relation between birth-death pairs $(s,t)$ and $(s',t')$ if $s$ is the row of the pivot and $t'$ is the reduced column. 
  For example, applying the algorithm  to the boundary homomorphism matrix 
        \savebox{\columnOverview}{
            \small
            $
            \bordermatrix{
                  & \scst  \eb & \scst  \ec & \scst  \ef & \scst  \ee & \scst  \ed & \tikzmarknode{GH0}{\scst\eg} & \tikzmarknode{HI0}{ \scst\eh} & \scst \ea \cr
                 \scst { \va} & \scst 0  & \scst 0  & \scst 0  & \scst 0  & \scst 0  & \scst 0  & \scst 1  & \scst 1  \cr
                 \scst  \vg & \scst 0  & \scst 0  & \scst 1  & \scst 0  & \scst 0  & \scst 1  & \scst 0  & \scst 0  \cr
                 \scst  \vc & \scst 1  & \scst 1  & \scst 0  & \scst 0  & \scst 0  & \scst 0  & \scst 0  & \scst 0  \cr
                 \scst  \vb & \scst 1  & \scst 0  & \scst 0  & \scst 0  & \scst 0  & \scst 0  & \scst 0  & \scst 1  \cr
                 \scst  \vf & \scst 0  & \scst 0  & \scst 1  & \scst 1  & \scst 0  & \scst 0  & \scst 0  & \scst 0  \cr
                 \scst { \vd} & \scst 0  & \scst 1  & \scst 0  & \scst 0  & \scst 1  & \scst 0  & \scst 0  & \scst 0  \cr
                 \scst { \ve} & \scst 0  & \scst 0  & \scst 0  & \scst 1  & \scst 1  & \scst 0  & \scst 0  & \scst 0  \cr
                 \scst { \vh} & \scst 0  & \scst 0  & \scst 0  & \scst 0  & \scst 0  & \hlightg{ \scst 1}  & \scst 1  & \scst 0  \cr
            }
            $
        }
        \begin{tikzpicture}[remember picture, overlay]
            \draw[-latex]([yshift=.5ex]GH0.north) to[bend left]node[above]{} ([yshift=.5ex]HI0.north);
        \end{tikzpicture}
        \begin{center}
            \usebox{\columnOverview}
        \end{center}
    \vspace*{0.4cm}
  that corresponds to the filter in Example~\ref{ex:ski_lifts_1}, 
  the first pivot is ${ (\vh, \eg)}$,  and the first column operation is ${ \eg} \to { \eh}$.
  We know from \eqref{eq:ski-lifts-bd-pairs} that the birth-death pairs of ${ \eg}$ and ${ \eh}$
  are respectively ${ (\vh,\eg)}$ and ${ (\vg,\eh)}$.
  Thus, the row operation ${ \eg} \to { \eh}$ implies that $({ (\vh,\eg)}, { (\vg, \eh)})$ is an arc in the depth poset.
  We then remove the column and row of the pivot and iterate.
    In the second and third iteration we get matrices
        \vspace*{0.2cm}
       \savebox{\columnNext}{
        $
         \bordermatrix{
              & \scst \eb & \scst \ec & \scst \ef & \tikzmarknode{EF1}{\scst \ee} & \tikzmarknode{DE1}{\scst \ed} & \scst \eh & \scst \ea \cr
            \scst{ \va} & \scst 0  & \scst 0  & \scst 0  & \scst 0  & \scst 0  & \scst 1  & \scst 1  \cr
            \scst \vg & \scst 0  & \scst 0  & \scst 1  & \scst 0  & \scst 0  & \scst 1  & \scst 0  \cr
            \scst \vc & \scst 1  & \scst 1  & \scst 0  & \scst 0  & \scst 0  & \scst 0  & \scst 0  \cr
            \scst{ \vb} & \scst 1  & \scst 0  & \scst 0  & \scst 0  & \scst 0  & \scst 0  & \scst 1  \cr
            \scst \vf & \scst 0  & \scst 0  & \scst 1  & \scst 1  & \scst 0  & \scst 0  & \scst 0  \cr
            \scst{ \vd} & \scst 0  & \scst 1  & \scst 0  & \scst 0  & \scst 1  & \scst 0  & \scst 0  \cr
            \scst{ \ve} & \scst 0  & \scst 0  & \scst 0  & \hlightg{\scst 1}  & \scst 1  & \scst 0  & \scst 0  \cr
        }
        \quad \mbox{\rm and} \quad
        \bordermatrix{
              &\scst \eb &\tikzmarknode{CD2}{\scst \ec} &\scst \ef &\tikzmarknode{DE2}{\scst \ed} &\scst \eh &\scst \ea \cr
            \scst{ \va} & \scst 0  & \scst 0  & \scst 0  & \scst 0  & \scst 1  & \scst 1  \cr
            \scst \vg & \scst 0  & \scst 0  & \scst 1  & \scst 0  & \scst 1  & \scst 0  \cr
            \scst \vc & \scst 1  & \scst 1  & \scst 0  & \scst 0  & \scst 0  & \scst 0  \cr
            \scst{ \vb} & \scst 1  & \scst 0  & \scst 0  & \scst 0  & \scst 0  & \scst 1  \cr
            \scst \vf & \scst 0  & \scst 0  & \scst 1  & \scst 1  & \scst 0  & \scst 0  \cr
            \scst{ \vd} & \scst 0  & \hlightg{\scst 1}  & \scst 0  & \scst 1  & \scst 0  & \scst 0  \cr
        },
        $
        }
        \begin{tikzpicture}[remember picture, overlay]
          \draw[-latex]([yshift=.5ex]EF1.north) to[bend left]node[above]{} ([yshift=.5ex]DE1.north);
          \draw[-latex]([yshift=.5ex]CD2.north) to[bend left]node[above]{} ([yshift=.5ex]DE2.north);
        \end{tikzpicture}
        \begin{center}
            \usebox{\columnNext}
        \end{center}
        \vspace*{0.2cm}
  in which the respective column operations are $\ee \to \ed$ and $\ec \to \ed$.
  This implies that the arcs $((\ve, \ee),(\vc, \ed))$ and $((\vd, \ec), (\vc, \ed))$ are in the depth poset.
  Continuing this procedure until the remaining matrix has no more non-zero entries, the column operations are $\ef \to \ed$, $\eb \to \ea$, and $\ed \to \ea$, so the relation $((\vf, \ef), (\vc, \ed))$ is also in the depth poset.
  The operations involving $\ea$ do not induce relations, because ${ \ea}$ is not a member of a birth-death pair.

  \smallskip
  The first algorithm gives some of the relations, but misses $((\vb, \eb), (\vc, \ed))$ among others.
  The remaining arcs are detected by the symmetric algorithm, which starts with the lowest non-zero entry of the leftmost non-zero column and reduces all other rows with a non-zero entry in the same column.
  Again, a row operation induces a relation between birth-death pairs $(s,t)$ and $(s',t')$ if $t$ is the column of the pivot and $s'$ is 
  the reduced row. 
  Finally, we remove the row and column of the pivot and iterate.
  This algorithm applied to the boundary matrix of Example~\ref{ex:ski_lifts_1},
            \[
                \bordermatrix{
                  & \scst \eb & \scst \ec & \scst \ef & \scst \ee & \scst \ed & \scst \eg & \scst \eh & \scst \ea \cr
                \scst{ \va} & \scst 0  & \scst 0  & \scst 0  & \scst 0  & \scst 0  & \scst 0  & \scst 1  & \scst 1  \cr
                \scst \vg & \scst 0  & \scst 0  & \scst 1  & \scst 0  & \scst 0  & \scst 1  & \scst 0  & \scst 0  \cr
                \tikzmarknode{C0}{\scst \vc} & \scst 1  & \scst 1  & \scst 0  & \scst 0  & \scst 0  & \scst 0  & \scst 0  & \scst 0  \cr
                \tikzmarknode{B0}{\scst \vb} & \hlightg{\scst 1}  & \scst 0  & \scst 0  & \scst 0  & \scst 0  & \scst 0  & \scst 0  & \scst 1  \cr
                \scst \vf & \scst 0  & \scst 0  & \scst 1  & \scst 1  & \scst 0  & \scst 0  & \scst 0  & \scst 0  \cr
                \scst{ \vd} & \scst 0  & \scst 1  & \scst 0  & \scst 0  & \scst 1  & \scst 0  & \scst 0  & \scst 0  \cr
                \scst{ \ve} & \scst 0  & \scst 0  & \scst 0  & \scst 1  & \scst 1  & \scst 0  & \scst 0  & \scst 0  \cr
                \scst \vh & \scst 0  & \scst 0  & \scst 0  & \scst 0  & \scst 0  & \scst 1 & \scst 1  & \scst 0  \cr
            }
            \]
        \begin{tikzpicture}[overlay,remember picture]
            \draw[-latex]([xshift=-.5ex]B0.west) to[bend left]node[]{} ([xshift=-.3ex]C0.west);
        \end{tikzpicture}
  finds the first pivot as $(\vb, \eb)$ with the corresponding row operation $\vb \to \vc$, hence, $((\vb, \eb), (\vc, \ed))$ is an arc in the poset, which is one of the relations we missed.
        
  \smallskip
  To obtain the depth poset, we run both algorithms until the remaining matrices have only zero entries
  and then take the transitive closure of the set of relations obtained from the algorithms.
  As explained by Theorem~\ref{thm:correctness_of_algorithm}, the depth poset is generated by all these rows and column operations.
  \exend  
\end{example}
\begin{figure}[ht]
  \centering \vspace{0.1in}
  \resizebox{!}{1.0in}{\input{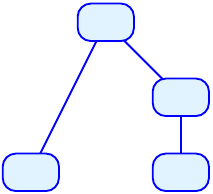_t}}
  \vspace{0.0in}
  \caption{\small The depth poset of filter $\filter$ defined in Example~\ref{ex:pentagon_2}. 
  Its computation is discussed in Example~\ref{ex:pentagon_4}.}
  \label{fig:poset-2}
\end{figure}

\begin{example}[pentagon 4: computing the depth poset]
  \label{ex:pentagon_4}
  Continues Ex.~\ref{ex:pentagon_3} and to be continued in Ex.~\ref{ex:pentagon_5}.
  The depth poset of the filtered complex in Example~\ref{ex:pentagon_2} is shown in Figure~\ref{fig:poset-2}.
  Running  Algorithm~\ref{alg:bottom_to_top_column_reduction} on this example, we obtain the following sequence of five boundary matrices:
  \\[0.3cm]
  \savebox{\columnExample}{
    $
        \Bdp{0} =
        \bordermatrix{
              & \scst \tikzmarknode{n0}{{ \ec}} & \scst { \eb} & \scst { \ea} & \scst \tikzmarknode{o0}{{ \ed}} & \scst { \ee} \cr
            \scst { \ve} & \scst 0 & \scst 0 & \scst 0 & \scst 1 & \scst 1 \cr
            \scst { \vb} & \scst 0 & \scst 1 & \scst 1 & \scst 0 & \scst 0 \cr
            \scst { \va} & \scst 0 & \scst 0 & \scst 1 & \scst 0 & \scst 1 \cr
            \scst { \vc} & \scst 1 & \scst 1 & \scst 0 & \scst 0 & \scst 0 \cr
            \scst { \vd} & \hlightg{\scst 1} & \scst 0 & \scst 0 & \scst 1 & \scst 0
        },
        \;
        \Bdp{1} =
        \bordermatrix{
              & \scst \tikzmarknode{m1}{{ \eb}} & \scst { \ea} & \scst \tikzmarknode{o1}{{ \ed}} & \scst { \ee} \cr
            { \ve} & \scst 0 & \scst 0 & \scst 1 & \scst 1 \cr
            { \vb} & \scst 1 & \scst 1 & \scst 0 & \scst 0 \cr
            { \va} & \scst 0 & \scst 1 & \scst 0 & \scst 1 \cr
            { \vc} & \hlightg{\scst 1} & \scst 0 & \scst 1 & \scst 0 \cr
        },
        \;
        \Bdp{2} =
        \bordermatrix{
              & \scst \tikzmarknode{l2}{{ \ea}} & \scst { \ed} & \scst \tikzmarknode{p2}{{ \ee}} \cr
            \scst { \ve} & \scst 0 & \scst 1 & \scst 1 \cr
            \scst { \vb} & \scst 1 & \scst 1 & \scst 0 \cr
            \scst { \va} & \hlightg{\scst 1} & \scst 0 & \scst 1 \cr
        },
    $
    }
  \begin{tikzpicture}[remember picture, overlay]
    \draw[-latex]([yshift=.5ex]n0.north) to[bend left]node[above]{} ([yshift=.5ex]o0.north);
    \draw[-latex]([yshift=.5ex]m1.north) to[bend left]node[above]{} ([yshift=.5ex]o1.north);
    \draw[-latex]([yshift=.5ex]l2.north) to[bend left]node[above]{} ([yshift=.5ex]p2.north);
  \end{tikzpicture}
  \usebox\columnExample
  \\
  \[
    \Bdp{3} =
    \bordermatrix{
          & \scst \tikzmarknode{o3}{ \ed} & \scst \tikzmarknode{p3}{ \ee} \cr
        \scst { \ve} & \scst 1 & \scst 1 \cr
        \scst { \vb} & \hlightg{\scst 1} & \scst 1 \cr
    },
    \quad
    \Bdp{4} =
    \bordermatrix{
          & \scst { \ee} \cr
        \scst { \ve} \hspace{-1em} & \scst 0 \cr
    }.
  \]
  \begin{tikzpicture}[remember picture, overlay]
    \draw[-latex]([yshift=.5ex]o3.north) to[bend left]node[above]{} ([yshift=.5ex]p3.north);
  \end{tikzpicture}
  \\
  The respective column operations are $\ec \to \ed$, $\eb \to \ed$, $\ea \to \ee$ and $\ed \to \ee$.
  We know from Example~\ref{ex:pentagon_3} that the birth-death pairs are
   $\{{ (\vd,\ec)}, { (\va,\ea)}, { (\vc,\eb)}, { (\vb,\ed)}\}$.
  Hence, the first two operations induce relations $((\vd,\ec),(\vb,\ed))$ and $((\vc,\eb),(\vb,\ed))$.
  The other two column operations do not induce relations, because cell $\ee$ gives birth to a homology class that never dies 
  and is therefore not in any birth-death pair. 

  \smallskip
  We still miss relations from $(\vd, \ec)$ to $(\vc, \eb)$ and from $(\va, \ea)$ to $(\vb, \ed)$, which are detected by the symmetric second algorithm.
  It gives rise to the following sequence of five boundary matrices:
  \[
        \Bdpp{0} =
    \bordermatrix{
          & \scst { \ec} & \scst { \eb} & \scst { \ea} & \scst { \ed} & \scst { \ee}  \cr
        \scst { \ve} & \scst 0 & \scst 0 & \scst 0 & \scst 1 & \scst 1 \cr
        \scst { \vb} & \scst 0 & \scst 1 & \scst 1 & \scst 0 & \scst 0 \cr
        \scst { \va} & \scst 0 & \scst 0 & \scst 1 & \scst 0 & \scst 1 \cr
        \scst \tikzmarknode{c0}{ \vc} & \scst 1 & \scst 1 & \scst 0 & \scst 0 & \scst 0 \cr
        \scst \tikzmarknode{d0}{ \vd} & \hlightg{\scst 1} & \scst 0 & \scst 0 & \scst 1 & \scst 0
    },
    \quad
    \Bdpp{1} =\;
    \bordermatrix{
          & \scst { \eb} & \scst { \ea} & \scst { \ed} & \scst { \ee}  \cr
       \scst { \ve} & \scst 0 & \scst 0 & \scst 1 & \scst 1 \cr
        \scst \tikzmarknode{b1}{ \vb} & \scst 1 & \scst 1 & \scst 0 & \scst 0 \cr
        \scst { \va} & \scst 0 & \scst 1 & \scst 0 & \scst 1 \cr
        \scst \tikzmarknode{c1}{ \vc} & \hlightg{\scst 1} & \scst 0 & \scst 1 & \scst 0 \cr
    },
    \quad
    \Bdpp{2} =\;
    \bordermatrix{
          & \scst  { \ea} & \scst { \ed} & \scst { \ee}  \cr
        \scst { \ve} & \scst 0 & \scst 1 & \scst 1 \cr
        \scst \tikzmarknode{b2}{ \vb} & \scst 1 & \scst 1 & \scst 0 \cr
        \scst \tikzmarknode{a2}{ \va} & \scst \hlightg{1} & \scst 0 & \scst 1 \cr
    }
  \]
  \[
    \Bdpp{3} =\;
    \bordermatrix{
          & \scst { \ed} & \scst { \ee} \cr
        \scst \tikzmarknode{e3}{ \ve} & \scst 1 & \scst 1 \cr
        \scst \tikzmarknode{b3}{ \vb} & \hlightg{\scst 1} & \scst 1 \cr
    },
    \quad
    \Bdpp{4} =
    \bordermatrix{
          & \scst { \ee} \cr
        \scst { \ve} \hspace{-1em} & \scst 0 \cr
    }.
  \]
  \begin{tikzpicture}[remember picture,overlay]
    \draw[-latex]([xshift=-.5ex]d0.west) to[bend left]node[]{} ([xshift= -.5ex]c0.west);
    \draw[-latex]([xshift=-.5ex]c1.west) to[bend left]node[]{} ([xshift= -.5ex]b1.west);
    \draw[-latex]([xshift=-.5ex]a2.west) to[bend left]node[]{} ([xshift= -.5ex]b2.west);
    \draw[-latex]([xshift=-.5ex]b3.west) to[bend left]node[]{} ([xshift= -.5ex]e3.west);
  \end{tikzpicture}
  \\
  The respective row operations are $\vd \to \vc$,  $\vc \to \vb$,  $\va \to \vb$, and $\vb \to \ve$.
  In particular, the operations $\va \to \vb$ and $\vd \to \vc$ provide the missing two relations,
  $((\vd, \ec), (\vc, \eb))$ and $((\va, \ea), (\vb, \ed))$.
  In general, we would still need to take the transitive closure of the relations, but in our simple example the set is already transitive.
  \exend  
\end{example}

\subsection{The Depth Diagram}
\label{sec:2.5}%

Persistent homology is a tool in topological data analysis that focuses on the ranges along which the homology classes appear in the sublevel sets.
In particular, the \emph{persistence diagram} of a filtered Lefschetz complex $(X,\filter)$, as defined in~\cite{EdHa10},
consists of points of the form $(\filter(\bth{\chi}),\filter(\dth{\chi}))$ for all birth-death pairs $\chi=(\bth{\chi},\dth{\chi})$.
Superimposing the depth poset on top of the persistence diagram, we add an arrow from the point $(\filter(\Birth{\varphi}{}), \filter(\Death{\varphi}{}))$ to the point $(\filter(\Birth{\psi}{}), \filter(\Death{\psi}{}))$ whenever $(\varphi, \psi)$ is in the transitive reduction of $\Depth{\filter}$.
Each such arrow points in north-western direction (Proposition~\ref{prop:nesting}) and connects two birth-death points of matching dimensions (Corollary~\ref{cor:dimension_split}).
We refer to this construction as the \emph{depth diagram} of the filter; see Figure~\ref{fig:depthdgm}.
The following example shows that the depth diagrams may distinguish otherwise identical persistence diagrams. 

\begin{figure}[ht]
    \centering \vspace{0.1in}
    \resizebox{!}{1.2in}{\input{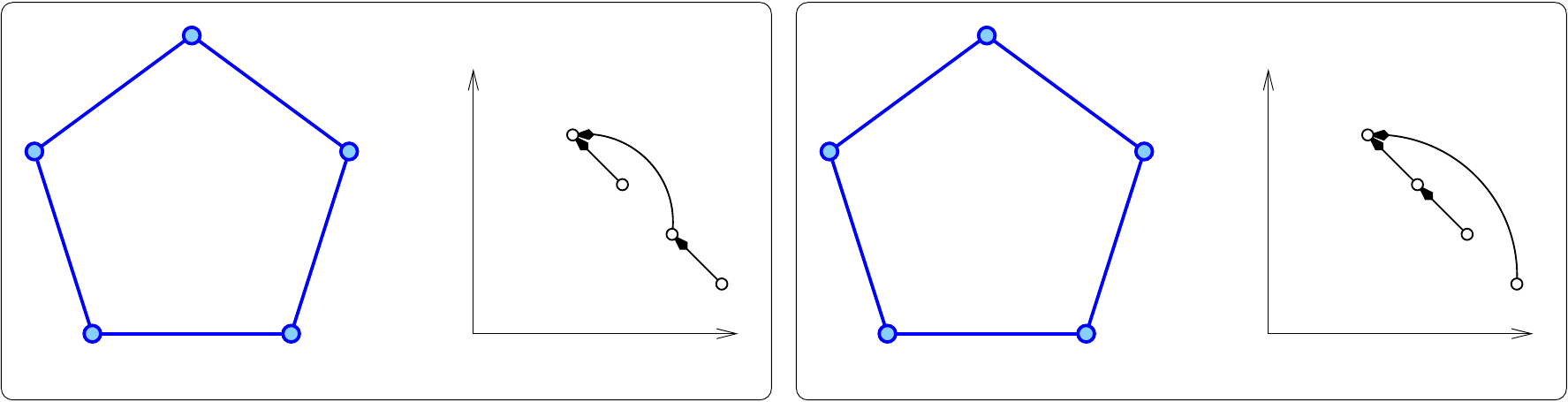_t}}
    \vspace{0.0in}
    \caption{\small Two different filters on the pentagon and the corresponding depth diagrams.
    The only difference between the two filters is that the two \emph{bottom} vertices have their values swapped.
    Both filters produce the same persistence diagram (the four points in the coordinate plane next to each pentagon) but not the same depth diagram (the four points together with the connecting arrows).}
    \label{fig:depthdgm}
  \end{figure}

\begin{example}[pentagon 5: persistence and depth diagrams]
  \label{ex:pentagon_5}
  Continues Ex.~\ref{ex:pentagon_4}.
  Recall the filtered pentagon in Figure~\ref{fig:penta-3}, with birth-death pairs $(\vd, \ec)$, $(\vc, \eb)$, $(\va, \ea)$, $(\vb, \ed)$.
  Correspondingly, there are four finite points in the persistence diagram: $(5, 6)$, $(4,7)$, $(3,8)$, $(2,9)$; see the drawing to the right of the pentagon in the left panel of Figure~\ref{fig:depthdgm}.
  There are also two points at infinity that belong to the cells giving birth to the homology of the pentagon ($\ve$ representing the one component, and $\ee$ representing the one $1$-cycle), but we ignore these points as they do not represent birth-death pairs and thus play no role in our analysis.
  As shown in Figure~\ref{fig:poset-2}, there are three relations in the depth poset, which we draw as arrows connecting the corresponding points in the diagram.
  The four finite points and three arrows all belong to the degree-$0$ depth diagram of the filtered pentagon.

  \smallskip
  To show that there are filtered Lefschetz complexes with identical persistence diagrams but different depth diagrams, we switch the values assigned to the two bottom vertices of the pentagon, $\vb$ and $\vc$; see the right panel of Figure~\ref{fig:depthdgm}.
  Accordingly, $\vb$ and $\vc$ trade places in their birth-death pairs, so the four points in the diagram remain the same; compare the left and right panels of Figure~\ref{fig:depthdgm}.
  However, in the new filter $(\vb, \eb)$ and $(\vd, \ec)$ are shallow, while $(\va, \ea)$ is a non-shallow birth-death pair that becomes shallow only after $(\vb, \eb)$ has been canceled, as indicated by the arrows in the depth diagram to the right of the pentagon in the right panel of Figure~\ref{fig:depthdgm}.
  \exend  
\end{example}

\section{Preliminaries}
\label{sec:3}

In this section we recall the basic concepts and results in poset theory, matrix algebra, and homological algebra. 
In particular, we recall the definition of Lefschetz complex. 
For the sake of simplicity, we omit proofs that are straightforward and indicate the omission by writing ``$\square$'' after the statement.

\subsection{Partial Orders}
\label{sec:3.1}

We use \cite{Schroeder03} as the main reference for the language of orders extensively used in the paper. 
Here we recall the basic terminology, set notation and conventions, and state a few properties for future reference. 

\smallskip
A \emph{partial order} on a set of \emph{nodes}, $P$, is a reflexive, transitive, and antisymmetric binary relation $R \subseteq P \times P$. 
We use the standard notation $p \leqr{R} q$ to mean $(p,q) \in R$. 
We say $p$ is a \emph{predecessor} of $q$, and write $p \ltr{R} q$, if $p \leqr{R} q$ and $p\neq q$. 
We drop $R$ in this notation if the partial order is clear from the context. 
We  say that $p$ and $q$ are \emph{comparable} if $p \leqr{R} q$ or $q \leqr{R} p$. 
Otherwise, $p,q$ are \emph{incomparable}. 
A \emph{linear order} is a partial order in which every two nodes are comparable.
If $S \subseteq P \times P$ is another partial order such that $R \subseteq S$, then $S$ is an \emph{extension} of $R$. 
It is a \emph{linear extension} if $S$ is a linear order.
A subset $A \subseteq P$ is an \emph{upper set} of $R$ if $q \in A$ and $q \leqr{R} p$ imply $p \in A$.
Symmetrically $A \subseteq P$ is a \emph{down set} of $R$ if $q \in A$ and $p \leqr{R} q$ imply $p \in A$.
Given a node $p \in P$, we write
\begin{align}
  \Down{R}{p} &= \{q \in P \mid q\ltr{R}p \}
\end{align}
for the set of all predecessors of $p$, which excludes $p$.
\begin{proposition}[down sets]
  \label{prop:down_sets}
  We have the following properties:
  \begin{itemize}
    \item[(i)]  For every $p\in P$, the set $\Down{R}{p}$ is a down set.
    \item[(ii)] The union and intersection of down sets are down sets.  \qed
  \end{itemize}
\end{proposition}
A node $p \in P$ is \emph{minimal} if $q \leqr{R} p$ implies $q=p$.
We write $\Min{R}$ for the set of all minimal nodes in $P$ with relation $R \subseteq P \times P$.
Note that if $P$ is non-empty and finite, then $\Min{R}$ is necessarily non-empty.
It is straightforward to observe that if $P' \subseteq P$, then $R\cap (P'\times P')$ is a partial order on $P'$. 
We call it the \emph{restriction} of $R$ to $P'$ and denote it $R|_{P'}$.
\begin{proposition}[min nodes under restriction]
  \label{prop:min_nodes_under_restriction}
  Assume $R$ is a partial order on $P$ and $P'\subseteq P$. 
  Then $P'\cap\Min{R} \subseteq \Min{R|_{P'}}$.
  Moreover, a node $p \in P'$ is minimal in $R|_{P'}$ if and only if $\Down{R}{p}\cap P'=\emptyset$.
  \qed
\end{proposition}

\smallskip
Given a partial order $R$ on $P$ and $p,q \in P$, we say \emph{$p$ is covered by $q$} or \emph{$p$ is an immediate predecessor of $q$}, and write $p \ltrdot{R} q$, if $p \ltr{R} q$ and there is no $r \in P$ that satisfies $p \ltr{R} r$ and $r \ltr{R} q$.
The \emph{transitive reduction} of $R$ is the relation that consists of the pairs $(p,q) \in R$ such that $p \ltrdot{R} q$.
Given an arbitrary relation $R \subseteq P \times P$, its \emph{transitive closure}, denoted $\closure R$, consists of all pairs $(p,q)$ for which there exist a sequence $p =  r_0, r_1, \ldots, r_n = q$ such that $r_i \leqr{R} r_{i+1}$ for $i = 0, 1, \ldots, n-1$.
Note that the transitive reduction of $R$ is the smallest subset of relations whose transitive closure is equal to that of $R$.
In this definition, we allow $n = 0$ so that the transitive closure of any relation is reflexive.
In the following proposition, we gather a few straightforward properties of transitivity.

\begin{proposition}[transitivity]
  \label{prop:transitivity}
  Let $R, R' \subseteq P \times P$ be two relations on $P$. 
  If $R' \subseteq R$, then $\closure R' \subseteq \closure R$. \qed
\end{proposition}

A partial order can be thought of as a directed graph with a \emph{vertex} for each node and a directed edge (an \emph{arc}) for each relation.
In graphical representations, it is often convenient to draw the transitive reduction, whose directed graph is known as the \emph{Hasse diagram} of the poset.

\smallskip
Given a linear order, $R$, on a finite set, $P$, there is a unique sequence of the nodes, $(p_1, p_2, \ldots, p_n)$, that orders them according to $R$. 
Moreover, each such sequence uniquely determines a linear order on $P$.
We therefore identify linear orders on finite sets with sequences of the nodes. 
Note that the $k$-th prefix of $(p_1, p_2, \ldots, p_n)$, the sequence $(p_1, p_2, \ldots, p_k)$, is a linear orders on $\{p_1, p_2, \ldots, p_k\}$. 

\subsection{Matrix Algebra}
\label{sec:3.2}

Let $(X, \leq)$ be a finite linearly ordered set. 
In the sequel, we will consider matrices with entries in a field, $\field$, and rows and columns indexed and ordered by the elements of $X$.
We denote the family of such matrices by $\field(X \times X)$.
Given a matrix $\matrixA \in \field(X \times X)$, we write $\matrixA[x,y]$ for the entry of $\matrixA$ in row $x$ and column $y$, $\matrixA[\cdot,y]$ for column $y$ of $\matrixA$,
and $\matrixA[x,\cdot]$ for row $x$ of $\matrixA$.
The matrix is \emph{upper triangular} if $\matrixA[x,y] \neq 0$ implies $x \leq y$ and \emph{strictly upper triangular} if $\matrixA[x,y] \neq 0$ implies $x < y$.
The identity matrix in $\field(X \times X)$ is denoted $\Id$. 

\smallskip
Given a  non-zero column of a matrix, its \emph{pivot} is the lowest non-zero entry in the column. 
We associate a partial map $\lowmap_\matrixA \colon X \pto X$ with matrix $\matrixA$, which maps the index of a non-zero column to the row index of the column's pivot.
We say that $(s,t) \in X \times X$ is a \emph{low pair} with respect to matrix $\matrixA$ if $s = \lowmap_\matrixA(t)$, and we write $\LP{\matrixA}$ for the set of low pairs of $\matrixA$.
Following~\cite{CEM06} we say the matrix $\matrixA$ is \emph{reduced} if $\lowmap_\matrixA$ is injective on its domain, and it is \emph{fully reduced} at a non-zero column if the pivot of this column is the only non-zero entry in its row. 
A \emph{reduced form} of $\matrixA$ is a reduced matrix  $\matrixR \in \field(X \times X)$ that satisfies $\matrixA \cdot \matrixV = \matrixR$ for some matrix $\matrixV \in \field(X \times X)$ such that $\matrixV-\Id$  is strictly upper triangular.
We recall that the \emph{rank} of a matrix $\matrixA$, denoted $\Rank \matrixA$, is the maximal number of linearly independent columns of $\matrixA$. 
By a \emph{left-to-right column addition} in $\matrixA$
we mean an addition of a multiple of a column $x$ to column $y$ such that $y > x$. 

\smallskip
The interest in reduced matrices comes from persistent homology~\cite{ELZ02}. 
A reduced form of $\matrixA$ always exists and may be computed algorithmically.
The algorithm in~\cite[Chapter~VII]{EdHa10} looks for pivots by scanning the columns of the matrix from left to right.
In this paper we are also interested in a variant that looks for pivots by scanning the rows from bottom to top~\cite{dSMV11}.
The following straightforward proposition collects some standard features of rank for future reference. 
\begin{proposition}[ranks]
  \label{prop:ranks}
  Consider a matrix $\matrixA$ with entries in field $\field$.
  Then
  \begin{itemize}
    \item[(i)] a left-to-right column addition in $\matrixA$ does not change the rank of any lower-left submatrix of $\matrixA$;
    \item[(ii)] removing from $\matrixA$ a column that is linearly independent of the other columns decreases the rank by one; 
    \item[(iii)] removing from $\matrixA$ a zero column or row does not change the rank of $\matrixA$. \qed
  \end{itemize}
\end{proposition}
Given a square matrix $\matrixA$ in $\field( X \times X)$ and $s,t \in X$, we denote by $\matrixA_s^t$ the submatrix obtained by deleting all rows above row $s$ and all columns to the right of column $t$ in $\matrixA$.
With this notation, we write 
\begin{equation}
  \label{eq:r}
  \rr_\matrixA(s,t) = \Rank \matrixA_s^t - \Rank \matrixA_s^{\Pred{t}} - \Rank \matrixA_{\Succ{s}}^t + \Rank \matrixA_{\Succ{s}}^{\Pred{t}},
\end{equation}
in which $\Pred{t}$ is the immediate predecessor of $t$ and $\Succ{s}$ is the immediate successor of $s$ in the linear order of $X$.
Note that some of the submatrices in~\eqref{eq:r} may be empty, so we define the rank of an empty matrix equal to zero.
Formula~\eqref{eq:r} defines a map $r_\matrixA \colon X \times X \to \Zspace$. 
We summarize the properties of this map in the following 
lemma, which reformulates and extends the Pairing Uniqueness Lemma in~\cite{CEM06}.
\begin{lemma}[reduced matrices]
  \label{lem:reduced_matrices}
  Let $\matrixA$ be an upper triangular matrix with entries in a field $\field$.
  \begin{itemize}
     \item[(i)] If $\matrixA'$ is obtained from $\matrixA$ by left-to-right column additions then $r_{\matrixA'}=r_\matrixA$.
     In particular, if $\matrixR$ is a reduced form of $\matrixA$ then $r_\matrixR = r_\matrixA$.
     \item[(ii)] If $\matrixR$ is reduced, then  $\LP{\matrixR} = r_\matrixR^{-1}(1)$. 
     For any reduced form $\matrixR$ of matrix $\matrixA$ we have $\LP{\matrixR} = r_\matrixA^{-1}(1)$.
     Thus, $\LP{\matrixR}$ does not depend on a particular choice of a reduced form $\matrixR$ of $\matrixA$.
     \item[(iii)] Assume $(s,t) \in \LP{\matrixA}$ is a low pair such that $\matrixA$ is fully reduced at column $t$.
     Then for $x,y \in X \setminus \{s,t\}$ we have $r_\matrixA(x,y) = r_{\matrixA''}(x,y)$ where 
     $\matrixA''$ is obtained from $\matrixA$ by removing column $t$ and row $s$. 
  \end{itemize}
\end{lemma}
\begin{proof}
  Since all submatrices in \eqref{eq:r} are lower-left,
  the first part of claim (i) is an immediate consequence of Proposition~\ref{prop:ranks}~(i).
  As $\matrixR$ is a reduced form of $\matrixA$, we have $\matrixR = \matrixA \cdot \matrixV$ for a matrix $\matrixV$ such that $\matrixV-\Id$ is strictly upper triangular. 
  Such a matrix is the product of elementary matrices, each representing a left-to-right column addition. 
  Therefore, $\matrixR$ may be obtained from $\matrixA$
  by left-to-right column additions, and the second part of claim (i) follows from its first part. 

  \smallskip
  To verify claim (ii), consider a row $s$ and a column $t$ and assume first that $(s,t) \in \LP{\matrixR}$. 
  By Proposition~\ref{prop:ranks}~(ii) and (iii), we have
  \begin{align}
    \Rank \matrixR_s^t - \Rank \matrixR_s^{\Pred{t}} &= 1; \nonumber \\
    \Rank \matrixR_\Succ{s}^t - \Rank \matrixR_\Succ{s}^{\Pred{t}} &= 0. \nonumber
  \end{align}
  and therefore $r_\matrixR(s,t) = 1$.
  Next assume that $(s,t) \not\in \LP{\matrixR}$. 
  If $t$ is the zero column or $\lowmap_\matrixR(t) < s$, then Proposition~\ref{prop:ranks}~(iii) implies that both differences of ranks are zero, and if $\lowmap_\matrixR(t) > s$, then Proposition~\ref{prop:ranks}~(ii) implies that both differences of ranks are equal to $1$:
  \begin{align}
    \Rank \matrixR_s^t - \Rank \matrixR_s^{\Pred{t}} = \Rank \matrixR_\Succ{s}^t - \Rank \matrixR_\Succ{s}^{\Pred{t}} = 0 ; \nonumber \\
    \Rank \matrixR_s^t - \Rank \matrixR_s^{\Pred{t}} = \Rank \matrixR_\Succ{s}^t - \Rank \matrixR_\Succ{s}^{\Pred{t}} = 1, \nonumber
  \end{align}
  respectively.
  Hence, $r_\matrixR(s,t) = 0$ in both of these cases.
  This proves the first part of (ii).
  The second part follows from the first part combined with claim (i).

  \smallskip
  To see (iii), consider $x,y \in X \setminus \{s,t\}$. 
  Let $\matrixB$ be matrix $\matrixA$ after removing column $t$. 
  Since $t$ is fully reduced, by assumption, row $s$ of $\matrixB$ is zero and matrix $\matrixA''$ is matrix $\matrixB$ after removing this row.
  Consider a submatrix of $\matrixB$. 
  Deleting row $s$ from $\matrixB$ either does not affect the submatrix or it removes a zero row from it. 
  In both cases, either trivially or by Proposition~\ref{prop:ranks}~(iii), the rank of the submatrix does not change.
  Hence, $r_{\matrixB}(x,y) = r_{\matrixA''}(x,y)$ so it suffices to verify $r_\matrixA(x,y)=r_{\matrixB}(x,y)$.

  \begin{description}
    \item[Case 1] If $y < t$, then $\matrixA_x^y$, $\matrixA_x^{\Pred{y}}$, $\matrixA_\Succ{x}^y$, $\matrixA_\Succ{x}^{\Pred{y}}$ coincide with the corresponding submatrices $\matrixB_x^y$, $\matrixB_x^{\Pred{y}}$, $\matrixB_\Succ{x}^y$, $\matrixB_\Succ{x}^{\Pred{y}}$, and removing column $t$ from $\matrixA$ does not affect any of them. 
    Hence, $r_\matrixA(x,y) = r_{\matrixB}(x,y)$.
  
    \item[Case 2] If $y > t$, then removing column $t$ from $A$ results in deleting a column from each of $\matrixA_x^y$, $\matrixA_x^{\Pred{y}}$, $\matrixA_\Succ{x}^y$, $\matrixA_\Succ{x}^{\Pred{y}}$, which results in
    $\matrixB_x^y$, $\matrixB_x^{\Pred{y}}$, $\matrixB_\Succ{x}^y$, $\matrixB_\Succ{x}^{\Pred{y}}$.
    \begin{description}
      \item[Case 2.1] If $x > s$, then deleting column $t$ just removed a zero column from each submatrix.
      Hence, $r_\matrixA(x,y) = r_{\matrixB}(x,y)$, by Proposition~\ref{prop:ranks}~(iii).
      \item[Case 2.2] If $x<s$, then deleting column $t$ removes a  linearly independent column from each submatrix. 
      This is because row $s$ in $\matrixA$ is zero except column $t$. 
      By Proposition~\ref{prop:ranks}~(ii), the ranks of the four submatrices decrease, which implies $r_\matrixA(x,y) = r_{\matrixB}(x,y)$ also in this case.
    \end{description}
  \end{description}
\end{proof}

\subsection{Lefschetz Complexes and Homological Algebra}
\label{sec:3.3}

The following definition goes back to S.\ Lefschetz, who introduced the concept in his monograph~\cite{Lef42}
under the more modest name of `complex', following a paper~\cite{Tucker33} by A.W.\ Tucker.
The concept was later reintroduced under various names, in particular in~\cite{MrBa09,HMMN14,BaRo24}.
\begin{definition}[Lefschetz complex]
  \label{dfn:Lefschetz_complex}
  A \emph{Lefschetz complex} with coefficients in a field, $\field$, is a triplet $(X, \dim, \bmap)$, in which $X$ is a finite set of elements called \emph{cells}, 
  $\dim \colon X \to \Zspace$ maps each cell to its \emph{dimension}, and $\bmap \colon X \times X \to \field$ is the \emph{incidence map} that satisfies 
  \begin{align}
    \sum\nolimits_{y \in X} \bmap(z,y) \cdot \bmap(y,x) &= 0 ,
    \label{eqn:Lefschetzcondition}
  \end{align}
  for all $z, x \in X$, and 
  \begin{equation}
    \label{eq:Lefschetzcondition2}
      \bmap(y,x) \neq 0 \text{ only if } \dim{y} = \dim{x} + 1.
  \end{equation}
  If $\bmap(y,x)\neq 0$, then we call $x$ a \emph{facet} of $y$, $y$ a \emph{cofacet} of $x$, and $(x,y)$ a \emph{vector} or \emph{facet pair}. 
  The transitive closure of the facet relation is a partial order on $X$, called the \emph{face relation}. 
  We refer to a cell as a $p$-cell if its dimension is $p$, and we will often say that $X$ is a Lefschetz complex, assuming $\dim$ and $\bmap$ are implicitly given.
\end{definition}

A Lefschetz complex may be viewed as an abstraction of a CW complex: its elements are the cells, it stores the facet relations between them via the incidence coefficients collected in the incidence map, and it uses this map to define the associated homology.
It is therefore situated between homological algebra~\cite{Weibel94} and the geometry of CW or cellular complexes~\cite{Hatcher02}.
Given a cell $y$ in a Lefschetz complex $X$, its  \emph{boundary} and \emph{coboundary} are
\begin{align}
  \bd y &= \{x \in X \mid \bmap(y,x) \neq 0\}; 
    \label{eq:bd} \\
  \cbd y &= \{z \in X \mid \bmap(z,y) \neq 0\}.
    \label{eq:cbd}
\end{align}
The concepts of boundary and coboundary lend a mildly geometric flavor to the notion of Lefschetz complex. 
They are related to the boundary homomorphism in the chain complex associated with $X$, but they are not the same.

\smallskip
The \emph{chain complex} $(\Cgroup{}(X),\partial)$ is defined as follows. 
Its first component, $\Cgroup{}(X)$, is the free $\field$-module spanned by $X$; that is: the collection of functions $c \colon X \to \field$, referred to as \emph{chains}, with pointwise addition and multiplication by scalars. 
We identify a cell $x \in X$ with the chain that sends $x$ to one and all other cells to zero. 
This allows us think of $X$ as a basis of $\Cgroup{}(X)$. 
The second component is the \emph{boundary homomorphism}, $\partial \colon \Cgroup{}(X) \to \Cgroup{}(X)$, which maps $y \in X$ to
\begin{equation}
  \label{eq:bd-kappa}
  \partial y=\sum_{x\in X} \bmap(y,x)x
\end{equation}
and extends linearly to all chains in \Cgroup{}(X).
It is easy to see that condition \eqref{eqn:Lefschetzcondition} implies $\partial \partial = 0$, so $(\Cgroup{}(X), \partial)$ is indeed a chain complex. 
Vice versa, given a free chain complex, $(\Cgroup{},\partial )$, every basis of $\Cgroup{}$ may be viewed as an example of a Lefschetz complex with the incidence map defined via $\partial$ restricted to this basis.
We note that if $\field=\mathbb{Z}_2$ is the field of integers modulo $2$, then a chain is the characteristic function of 
a subset of $X$ and may be identified with the subset. 

\smallskip
Like for general chain complexes, the submodule $\Zgroup{}(X) \subseteq \Cgroup{}(X)$ of \emph{cycles} is the kernel of $\partial$, the submodule $\Bgroup{}(X) \subseteq \Zgroup{}(X)$ of \emph{boundaries} is the image of $\partial$, and the \emph{homology} of the Lefschetz complex is the quotien, $\Hgroup{}(X) = \Zgroup{}(X) / {\Bgroup{}(X)}$. 
We say $X$ is \emph{boundaryless} if $\bmap = 0$.
In this case $\partial = 0$,  $\Bgroup{}(X) = 0$ and $\Hgroup{}(X) = \Zgroup{}(X) = \Cgroup{}(X)$.

\smallskip
As mentioned before, a cellular complex may be viewed as a Lefschetz complex whose incidence map is given by the Cellular Boundary Formula; see~\cite[p.\ 140]{Hatcher02}.
The next result justifies the use of Lefschetz complexes to model CW complexes.
\begin{theorem}[McCord \cite{McC66} and \cite{Hatcher02} Thm.\ 2.35]
  \label{thm:McCord}
  If $Y$ is a CW complex, then the singular homology of $Y$ considered as a topological space  is isomorphic to the homology of $Y$ considered as a Lefschetz complex.  \qed
\end{theorem}

In the case of a regular CW complex, the Cellular Boundary Formula is particularly simple and the incidence coefficients take values in $\Zspace_2$: $\bmap(y,x) = 1$ iff $x$ is a facet of $y$ in the sense of the CW complex. 
This, in particular, applies to complexes in Figure~\ref{fig:3division}, and in the left panel of Figure~\ref{fig:2division}, but not to the complex in the right panel of Figure~\ref{fig:2division}.
However, Theorem~\ref{thm:McCord} applies also to this complex, only the direct evaluation of the incidence coefficients is less straightforward.

\subsection{Subcomplexes and Cancellations}
\label{sec:3.4}

Let $(X, \dim, \bmap{}{})$ be a Lefschetz complex, $Y \subseteq X$, 
and $\dim|_Y \colon Y \to \Zspace$, $\bmap{}{}|_{Y \times Y} \colon Y \times Y \to \field$ the corresponding restrictions 
of the two maps.
If $(Y, \dim|_Y, \bmap{}{}|_{Y \times Y})$ is itself a Lefschetz complex, we call it a \emph{Lefschetz subcomplex} of $X$.
Note however that $Y$ being a Lefschetz subcomplex is not sufficient for $\Cgroup{}(Y)$ to be a chain subcomplex of $\Cgroup{}(X)$.
We give a sufficient condition for a subset of $X$ to be a Lefschetz subcomplex in terms of the Lefschetz topology on $X$ \cite[Section~3.5]{MrWa25}; that is: the Alexandrov topology~\cite{Al37} induced by the face relation on $X$.
To formulate it, we note that an upper set, down set with respect to the face relation is a \emph{closed set}, \emph{open set} in the Lefschetz topology, respectively.
Furthermore, we call the intersection of an open and a closed set a \emph{locally closed set} in the Lefschetz topology.
See Propositions~3.5.5 and~3.5.7 in~\cite{MrWa25} for the following results.
\begin{proposition}[Lefschetz subcomplexes]
    \label{prop:Lefschetz_subcomplexes}
    Let $X$ be a Lefschetz complex, and $Y \subseteq X$ a locally closed set in the Lefschetz topology. 
    Then $Y$ is a Lefschetz subcomplex of $X$, and if $Y$ is a closed set, then $\Cgroup{}(Y)$ is a chain subcomplex of $\Cgroup{}(X)$. \qed
\end{proposition}

Besides taking Lefschetz subcomplexes, we can cancel vectors to obtain new Lefschetz complexes from a given Lefschetz complex.
The idea goes back to~\cite{KaMroSlu98} and~\cite{KMM04}, but see also~\cite{MisNan13}.
It may be viewed as an algebraic analogue of an internal collapse~\cite{Fe26} or a collapse in simple homotopy theory of CW complexes. 
A cancellation simplifies the complex by removing a cell together with one of its facets.
\begin{definition}[cancellation]
  \label{dfn:cancellation}
  Let $s$ be a facet of $t$; that is: $(s,t)$ is a vector in $X$.
  The operation that sets $X' = X \setminus \{s, t\}$, $\dim' = \dim|_{X'}$, and $\bmapp \colon X' \times X' \to \field$ defined by
  \begin{align}
    \label{eqn:incidenceupdate}
    \bmapp(y,x) &= \bmap(y,x) -  \bmap(t,s)^{-1} \cdot \bmap(y,s) \cdot \bmap(t,x)
  \end{align}
  is called the \emph{cancellation} of the vector $(s, t)$, and the triple $(X', \dim', \bmapp)$ is called the \emph{quotient of $X$ by $(s,t)$}.
  We note that in the case of modulo 2 arithmetic, the formula in \eqref{eqn:incidenceupdate} simplified to
  \begin{align}
    \bmapp(y,x) &= \bmap(y,x) + \bmap(y,s) \cdot \bmap(t,x).
      \label{eqn:incidenceupdate-z2}
  \end{align}
\end{definition}

As promised, this operation preserves the conditions required for being a Lefschetz complex.
This is not difficult to prove by direct computations, but see also \cite[Thm.\ 1]{KaMroSlu98}, \cite[Thm.\ 4.19]{KMM04},
and the second paragraph after \cite[Thm.\ 2.4]{MisNan13}).
\begin{proposition}[cancellation]
  \label{prop:cancellation}
  The quotient complex of a Lefschetz complex is itself a Lefschetz complex. \qed
\end{proposition}

Just as collapses preserve the homotopy type, cancellations preserve the chain homotopy type.
Indeed, we have the following theorem, which can be proved just like \cite[Thm.\ 2]{KaMroSlu98}, \cite[Thm.\ 4.22]{KMM04}, or \cite[Lemma 2.5]{MisNan13}.

\begin{theorem}[cancellations preserve homology]
    \label{thm:cancellations_preserve_homology}
    Let $X$ be a Lefschetz complex, and $X'$ its quotient. 
    Then the chain complexes $\Cgroup{}(X)$ and $\Cgroup{}(X')$ are chain homotopic. 
    In particular, $\Hgroup{}(X)$ and $\Hgroup{}(X')$ are isomorphic.  \qed
\end{theorem}

The following proposition is a straightforward consequence of~\eqref{eqn:incidenceupdate}.

\begin{proposition}[incidences after cancellation]
  \label{prop:incidences_after_cancellation}
  Let $(s,t)$ is a vector in $X$, and $x, y \in X$. 
  Then we have
  \begin{itemize}
    \item[(i)] If exactly one of $\bmapp(y,x)$, $\bmap(y,x)$ is zero, then $\bmap(y,s) \neq 0 \neq \bmap(t,x)$. 
             In words, if the cancellation of $(s,t)$ changes the status of the pair $(x,y)$, then $s$ is a facet of $y$ and $x$ is a facet of $t$.
    \item[(ii)] If $\bmap(y,s) = 0$ or $\bmap(t,x) = 0$, then $\bmapp(y,x) = \bmap(y,x)$. 
             In words, if $s$ is not a facet of $y$ or $x$ is not a facet of $t$, then the cancellation of $(s,t)$ does not change the status of $(x,y)$.
             \qed
  \end{itemize}
\end{proposition}

\section{Combinatorial Dynamics in Lefschetz Complexes}
\label{sec:4}

In this section, we recall the basic concepts and results in combinatorial dynamics, adapted to the setting of Lefschetz complexes.

\subsection{Combinatorial Dynamics}
\label{sec:4.1}

This field is rooted in the discrete Morse theory developed by Robin Forman~\cite{For98,For98b}.
We introduce elementary concepts of combinatorial dynamical systems based on Forman's ideas and adapted to the needs of this paper. 
Let $X$ be a Lefschetz complex, and recall that a \emph{vector} is a pair of cells, $(s,t)$, such that $\bmap(t,s)$ is invertible,
which for field coefficients is equivalent to $\bmap(t,s)\neq 0$.
A \emph{combinatorial vector field}, $V \subseteq X \times X$, is a collection of vectors such that every cell belongs to at most one vector.
Every cell that does not belong to any vector in $V$ is called a \emph{critical cell}, while the vectors are made up of \emph{non-critical cells}.
We note that this definition of combinatorial vector field is formally different from Forman's definition but it is straightforward to translate it to the original one. 

\smallskip
Like in the classical case, a combinatorial vector field induces a dynamical system. 
In Forman's approach, it takes the form of a discrete flow consisting of iterates of a chain map $\Phi$ defined in terms of $V$; see \cite[Def.\ 6.2]{For98}). 
We take a slightly different approach and see the dynamical system as the set of walks or paths in an associated directed graph. 
The  vertices of this graph, $\Digraph{V}$, are the cells in $X$, and the arcs are the pairs $(x,y) \in X \times X$, of which there are three kinds:
\begin{itemize}
  \item the \emph{explicit arcs} are the vectors in $V$;
  \item the \emph{implicit arcs} are the pairs $(y,x)$ such that $x$ is a facet of $y$ but $(x,y) \not\in V$;
  \item there is a \emph{loops} from $x$ to $x$ for every critical cell $x \in X$.
\end{itemize}
A \emph{(directed) path} is a finite or infinite sequence of vertices $(x_i)$ such that $(x_i, x_{i+1})$ is an arc in $\Digraph{V}$ for each $i$.
In the case of a finite path, $x_0, x_1, \ldots, x_n$, its \emph{length} is $n$.
The path is \emph{trivial} if $n=0$.
A path is a \emph{cycle} if $x_0 = x_n$.
Since the vectors are disjoint, every explicit arc of a path is necessarily followed by an implicit arc.
Since a  cycle ends at the same cell it started from, a non-constant cycle strictly alternates between explicit and implicit arcs.

\smallskip
We call $V$ a \emph{combinatorial gradient} on $X$ if $\Digraph{V}$ has only constant cycles; that is: if the transitive closure of $\Digraph{V}$ is a partial order on $X$.
A \emph{discrete Morse function} for $V$ is a map $f \colon X \to \Rspace$, such that $f(x) \geq f(y)$ whenever $(x,y)$ is an explicit arc, and $f(x) < f(y)$ whenever $(y,x)$ is an implicit arc.
It follows that $f(x_0) \geq f(x_n)$ if there is a path from $x_0$ to $x_n$.
The following lemma is a well known result for CW complexes \cite{For98, Knudson15} that readily adapts to Lefschetz complexes.
\begin{lemma}[Morse function]
  \label{lem:Morse_function}
  A combinatorial vector field on a Lefschetz complex admits a discrete Morse function iff it is a combinatorial gradient. \qed
\end{lemma}

\begin{remark}
   We remark that our concept of path differs from the concepts of a gradient path \cite[Def.\ 8.4]{For98} and of a W-path \cite[Def.\ 9.2]{For98}. 
   In particular, gradient paths and W-paths fix the dimension of their cells. 
   It also differs from the concepts of a path in \cite{MisNan13} and of a $V$-path in \cite{DRS15} because it does not exclude critical cells. 
   The move to paths of $\Digraph{V}$ does not affect the fundamental results of discrete Morse theory and corresponds better to classical Morse theory. 
   In particular, like in classical Morse theory, the domain of the Morse function $f$ and the phase space of the dynamics of $\Digraph{V}$ are the same. 
   This is in contrast to the dynamics of the chain map $\Phi$ in \cite{For98}.         
\end{remark}

In the sequel we will need the following straightforward proposition. 
\begin{proposition}[subfamily of combinatorial gradient]
  \label{prop:subset_of_combinatorial_gradient}
  Any subset of a combinatorial gradient is a combinatorial gradient. \qed
\end{proposition}

\subsection{The Morse Complex.}
\label{sec:4.2}

Central to  Morse theory is the concept of a Morse complex. 
In the classical case, it takes the form of a free module spanned by the critical points of the Morse function with the boundary homomorphism defined by counting the heteroclinic connections between critical cells; see \cite[Section 3]{AuDa14} or \cite[Section 4.2]{Knudson15}.
In Forman's definition, the Morse complex is the subcomplex of $\Cgroup{}(X)$ that consists of the chains fixed by the discrete flow $\Phi$; see \cite[Section 7]{For98}. 
It is shown to be isomorphic to another chain complex spanned by the critical cells of the combinatorial vector field; see \cite[Thm\ 8.10]{For98} and \cite[Thm.\ 7.20]{Knudson15}). 
We adopt this equivalent formulation as it is similar to the classical definition.
To this end, we first define the \emph{multiplicity} of a finite path $\pi=(x_i)_{i=0}^{n}$  by 
\[
    \nu(\pi)=\prod_{i=1}^n \nu(x_{i-1},x_i) ,
\]
in which
\[
   \nu(x,y)=\begin{cases}
             -\bmap(y,x)^{-1} & \text{if $(x,y)$ is explicit,}\\
             \bmap(x,y) & \text{otherwise.}
           \end{cases}
\]
We observe that the multiplicity of a concatenation of finite paths is the product of multiplicities.
For example, for $\field = \Zspace_2$, the multiplicity of an arc is $1$, independent on whether the arc is implicit or explicit, so $\nu (\pi) = 1$.
Write $\Path(s,t)$ for the set of finite paths that originate at $s$, terminate at $t$, and contain no loops.
Recall that a path follows the combinatorial gradient and therefore does not contain cycles other than loops, which implies that $\Path(s,t)$ is finite if the Lefschetz complex is finite.
\begin{definition}[Morse complex]
  \label{dfn:Morse_complex}
  Let $V$ be a combinatorial gradient on a Lefschetz complex $(X,\dim,\bmap)$.
  The \emph{Morse complex} of $V$ is the triple $(X', \dim|_{X'}, \bmapp)$, in which $X'$ is the set of critical cells of $V$ and $\bmapp$ is given for $s,t \in X'$ by
  \begin{equation}
    \label{eq:morse-kappa}
    \bmapp(s,t) =\sum_{\pi\in\Path(s,t)}  \nu(\pi).
  \end{equation}
\end{definition}

\begin{remark}
    The attentive reader will notice that \eqref{eq:morse-kappa} resembles the corresponding formula in the classical Morse theory, but differs in shape from \cite[Def.\ 8.6]{For98}.
    It is not difficult to check the two formulas agree, and the difference in shape merely reflects the different notions of paths they use.
\end{remark}

One of the fundamental results in classical Morse theory states that the homology of a Morse complex is isomorphic to the homology of the manifold. Its discrete counterpart for Lefschetz complexes is the following. 

\begin{theorem}[Morse complex]
  \label{thm:Morse_complex}
  Let $V$ be a combinatorial gradient on a Lefschetz complex, $(X,\dim,\bmap)$, and $(X',\dim',\bmap')$ its Morse complex. 
  Then $X'$ is a Lefschetz complex, and its homology is isomorphic to the homology of $X$.
\end{theorem}

\begin{remark}
  Forman's proof of this theorem is based on chain homotopies constructed via the discrete flow $\Phi$.  
  Mischaikow and Nanda~\cite{MisNan13} present an inductive proof based on cancellations. 
  The idea is that vectors of a combinatorial gradient may be canceled one-by-one and the resulting Lefschetz complex turns out to be the Morse complex. 
  Since the concept of gradient path in~\cite{MisNan13} is auxiliary and differs from both the gradient path in~\cite{For98} and our concept of path, we present the proof of Theorem~\ref{thm:Morse_complex} for the sake of completeness.
\end{remark}

\subsection{Proof of Theorem~\ref{thm:Morse_complex}.}
\label{sec:4.3}

By Proposition~\ref{prop:cancellation}, the cancellation of a vector produces another Lefschetz complex.
We complement this insight by showing that there is progress, in the sense that the remaining vectors form a combinatorial gradient of the new Lefschetz complex.
\begin{lemma}[cancellation preserves combinatorial gradient]
  \label{lem:cancellation-preserves-combinatorial-gradient}
  Let $V$ be a combinatorial gradient on a Lefschetz complex $X$, $(s,t) \in V$ a vector, 
  and $X'$ the quotient of $X$ by $(s,t)$.
  Then $V' = V \setminus \{s,t\}$ is a combinatorial gradient on $X'$.
\end{lemma}
\begin{proof}
  We first prove that $V'$ is a combinatorial vector field on $X'$.
  It suffices to show that if $(u,v) \neq (s,t)$ is a vector on $V$, then $(u,v)$ is also a vector on $X'$.
  To see this, observe that at least one of $\bmap(v,s)$ and $\bmap(t,u)$ is zero, for else $s,t,u,v,s$ would be a non-trivial cycle in $\Digraph{V}$.
  Hence, $\bmapp(v,u) = \bmap(v,u)$ by \eqref{eqn:incidenceupdate}.
  Since $(u,v) \in V$, we have $\bmap(v,u)\neq 0$, so $\bmapp(v,u)\neq 0$, as claimed.

  \smallskip
  We second show that canceling $(s,t)$ preserves the acyclicity of the associated directed graph.
  To derive a contradiction, assume that $\Digraph{V'}$ has a non-trivial cycle, and consider an arc $(y,x)$ in this cycle that is not an arc in $\Digraph{V}$.
  All explicit arcs of $\Digraph{V'}$ are also explicit arcs of $\Digraph{V}$, so $(y,x)$ is an implicit arc of $\Digraph{V'}$ and $\bmapp(y,x) \neq 0$ while $\bmap(y,x) = 0$.
  Hence,  $\bmap(y,s) \neq 0 \neq  \bmap(t,x)$ by Proposition~\ref{prop:incidences_after_cancellation}~(i), which implies that   
  $y,s,t,x$ is a path in $\Digraph{V}$.
  By replacing all such arcs $(y,x)$ in $\Digraph{V'}$ by the paths $y,s,t,x$ in $\Digraph{V}$, we obtain a non-trivial cycle in $\Digraph{V}$, which contradicts $V$ being a combinatorial gradient.
\end{proof}

By Proposition~\ref{prop:cancellation} and Lemma~\ref{lem:cancellation-preserves-combinatorial-gradient}, we can cancel one vectors in $V$ at a time.
By Theorem~\ref{thm:cancellations_preserve_homology}, a cancellation in a Lefschetz complex preserves the homology. 
To prove Theorem~\ref{thm:Morse_complex}, it therefore suffices to show that the incidences between the critical cells after canceling all pairs are dictated by the multiplicities of the connecting paths. 
\begin{theorem}[global cancellation]
  \label{thm:global_cancellation}
  Let $V$ be a combinatorial gradient on a Lefschetz complex $(X, \dim, \bmap)$, and $(X^c, \dim_{|X^c}, \bmapc)$ the Lefschetz complex obtained by canceling all vectors in $V$.
  Then $X^c$ is the set of critical cells of $V$ in $X$, and for any two cells $s, t \in X^c$, we have 
  \begin{align}
    \bmapc(s,t) =\sum_{\pi\in\Path(s,t)}  \nu(\pi).
    \label{eqn:paths}
  \end{align}
  In particular, the resulting Lefschetz complex is independent of the sequence in which the vectors are canceled, namely the Morse complex of $V$.
\end{theorem}
\begin{proof}
  By construction, the cells in $X^c$ are precisely the critical cells of $V$.
  Let $s,t \in X^c$ and $k = \dim s - \dim t$.
  If there is a path from $s$ to $t$, then $k$ is the surplus of implicit arcs along the path.
  Since $s$ and $t$ are both critical, the first and last arcs of the path are implicit, so the surplus is at least $1$; that is: $k \geq 1$.
  It follows that for $k \leq 0$ there is no path from $s$ to $t$, which implies that the right-hand side of \eqref{eqn:paths} vanishes.
  In this case, we also have $\bmapc(s,t) = 0$ because $X^c$ is a Lefschetz complex and thus satisfies \eqref{eq:Lefschetzcondition2}.
  This proves \eqref{eqn:paths} when $k \leq 0$.

  \smallskip
  The remainder of the argument is by induction on $k$, and most of the effort will go into proving \eqref{eqn:paths} for the case $k = 1$.
  In this particular case, we proceed by induction on the cardinality of $V$.
  Write $n = \card{V}$ and suppose $n = 0$.
  Then $X^c = X$, $\bmapc = \bmap$, all edges in $G_V$ are implicit, and the dimension of the cells along a path decreases by one at each arc.
  Since $k = 1$, the only possible path from $s$ to $t$ is therefore $(s,t)$ with multiplicity $\bmap(s,t)$, and we have such a path if and only if $t$ is a facet of $s$.
  Therefore, both sides of \eqref{eqn:paths} are $\bmap(s,t)$, if $t$ is a facet of $s$, and zero, otherwise, which proves \eqref{eqn:paths} when $n=0$.

  \smallskip
  To continue with the induction, assume \eqref{eqn:paths} holds for $k = 1$ and all cardinalities of $V$ less than some $n > 0$.
  Let $\card{V} = n$, $(u,v) \in V$ the last canceled vector, and set $V' = V \setminus \{u,v\}$.
  Hence, $V'$ is an acyclic combinatorial vector field on $X$ of cardinality $n-1$.
  Denote the collection of paths from $s$ to $t$ in $V'$ by $\Path' (s,t)$.
  Each path in $G_V$ either excludes or includes $(u,v)$, so we can write the collection as a disjoint union of two sets of paths,  $\Path(s,t) = \Path_0(s,t) \sqcup \Path_1(s,t)$, in which the indices $0$ and $1$ indicate the exclusion and inclusion of this arc, respectively.
  The only difference between the associated digraphs of $V$ and $V'$ is the arc connecting $u$ and $v$, which is explicit from $u$ to $v$ in $\Digraph{V}$ and implicit from $v$ to $u$ in $\Digraph{V'}$.
  Thus, we have $\Path_0(s,t) \subseteq \Path'(s,t)$ and the inclusion cannot be strict, because otherwise we would have paths in $\Digraph{V'}$ from $s$ to $v$ and from $u$ to $t$ implying that $k = \dim s - \dim t > 1$.
  Hence, $\Path_0(s,t) = \Path'(s,t)$.

  \smallskip
  Let $\bmap'$ be the incidence map after canceling all vectors in $V'$.  
  First observe that the only path from $v$ to $u$ in $\Digraph{V'}$ is $(v,u)$, because another path from $v$ to $u$ concatenated with the explicit arc $(u,v)$ would produce a non-trivial cycle in $\Digraph{V}$. 
  By the inductive assumption applied to \eqref{eqn:paths}, we conclude that $\bmap'(v,u) = \bmap(v,u)$.
  We thus have
  \begin{align}
    \sum_{\pi \in \Path(s,t)} \nu(\pi) 
      &= \sum_{\pi \in \Path_0(s,t)} \nu(\pi) 
       + \sum_{\pi \in \Path_1(s,t)} \nu(\pi) 
         \label{eqn:thm46_one} \\
      &= \sum_{\pi \in \Path'(s,t)}\nu(\pi)
        + \sum_{\pi_1 \in \Path'(s,u), \pi_2 \in \Path'(v,t)}\hspace{-0.2in} \nu(\pi_1) (- \bmap(v,u)^{-1}) \nu(\pi_2) 
         \label{eqn:thm46_two} \\
      &= \sum_{\pi \in \Path'(s,t)} \nu(\pi)
        - \bmap(v,u)^{-1} 
        \sum_{\pi_1 \in \Path'(s,u)} \nu(\pi_1) 
        \sum_{\pi_2 \in \Path'(v,t)} \nu(\pi_2) 
         \label{eqn:thm46_three} \\
      &= \bmap'(s,t) - \bmap'(v,u)^{-1} \bmap'(s,u) \bmap'(v,t) = \bmapc(s,t),
         \label{eqn:thm46_four}
  \end{align}
  where \eqref{eqn:thm46_one} follows from $\Path(s,t) = \Path_0(s,t) \sqcup \Path_1(s,t)$, \eqref{eqn:thm46_two} because a path $\pi \in \Path_1(s, t)$ is the concatenation of a path $\pi_1 \in \Path_0(s,u)$, the edge $(u,v)$, and a path $\pi_2 \in \Path_0(v,t)$, \eqref{eqn:thm46_three} from the distributive law, and the two sides of \eqref{eqn:thm46_four} from the induction assumption applied to $\bmap'$, and from \eqref{eqn:incidenceupdate}, because $\bmapc$ results from $\bmap'$ by canceling $(u,v)$.
  This completes the proof in the case $k=1$.

  \smallskip
  Finally, we fix a $k>1$ and we assume that \eqref{eqn:paths} holds when $\dim s - \dim t$ is less than $k$.
  To prove that it holds if $\dim s - \dim t = k$, observe that  for $k>1$ the left-hand-side of \eqref{eqn:paths} is zero by~\eqref{eq:Lefschetzcondition2}.  
  Since $s$ is critical, every path originating at $s$ is a concatenation of an implicit arc, $(s,x)$, and a path from $x$ to $t$, in which $x$ is a facet of $s$.
  Thus, we have
  \[
    \sum_{\pi\in\Path(s,t)}  \nu(\pi) 
        = \sum_{x\in\bd s} \bmap(s,x)
          \sum_{\pi'\in\Path(x,t)} \nu(\pi')
        = \sum_{x\in X} \bmap(s,x) \bmap(x,t) = 0,
  \]
  where the first equality results from splitting the paths from $s$ to $t$ according the facets $x$ of $s$, the second equality follows from the induction assumption because $\dim x -\dim t=\dim s - 1 - \dim t = k-1 < k$ if $\bmap(s,x) \neq 0$, and the last equality follows from~\eqref{eq:Lefschetzcondition2}.
\end{proof}

\section{Persistent Homology in Lefschetz Complexes}
\label{sec:5}

In this section, we recall the main concepts of persistent homology of filters, as defined in \cite{ELZ02}, but adapted to the setting of Lefschetz complexes.
Central to our approach is the notion of a birth-death pair, which we introduce for filters that satisfy the strong inequality in \eqref{eq:filter-monotonicity}.
While a small perturbation suffices to assure this inequality for a more general filter, we remark that different perturbations may lead to different birth-death pairs; see also Section~\ref{sec:9.5}.

\subsection{Filters and Persistent Homology}
\label{sec:5.1}

\smallskip
Recall that a filter on a Lefschetz complex is an injective function, $\filter \colon X \to \Rspace$, that is monotonic, 
i.e.\ $\filter(x) < \filter(y)$ whenever $x$ is a facet of $y$.
The pair $(X, \filter)$ is a filtered Lefschetz complex, and when $\filter$ is understood, we call $X$ a filtered Lefschetz complex.
The filter induces a linear order on the cells of $X$ given by $x\leqf y$ iff $\filter(x)\leq \filter(y)$.
Assuming $x$ is neither the first nor the last cell in this order, we write $\Pred{x}$ and $\Succ{x}$ for the immediate predecessor and successor of $x$, respectively.
For $y \in X$, we write $X_y = \{x \in X \mid x \leqf y\}$.
Given a chain $c = \sum_{x\in X} \theta_xx \in \Cgroup{}(X)$, we call $\supp{c} = \{x\in X\mid \theta_x\neq 0\}$ its \emph{support}, and if $c \neq \emptyset$, we write $\last c$ for the last cells in $|c|$ according to the linear order.
We formally state a few easy properties for later reference.
\begin{proposition}[support of chain]
  \label{prop:support_of_chain}
  Let $c, c_1, c_2 \in \Cgroup{}(X)$ and $\theta \in \field \setminus \{0\}$. 
  Then $\supp{\theta c} = \supp{c}$ and $\supp{c_1+c_2} \subseteq \supp{c_1} \cup \supp{c_2}$. 
\end{proposition}

Observe that $X_y = \filter^{-1} (-\infty, \filter(y)]$; that is: the sublevel set of $\filter$ at $y$.
By the monotonicity of $\filter$, every sublevel set of $\filter$ is a closed Lefschetz subcomplex of $X$, and by Proposition~\ref{prop:Lefschetz_subcomplexes}, $\Cgroup{}(X_y)$ is a chain subcomplex of $\Cgroup{}(X)$.

\smallskip
Given a cell $y \in X$, we want to describe how the homology changes as we move from $X_{\Pred{y}}$ to $X_y$. 
To distinguish between homology classes in various sublevel sets, 
we write $[z]_y$ to denote the class in $\Hgroup{}(X_y)$ that contains the cycle $z \in \Zgroup{}(X_y)$.
We say $y$ \emph{gives birth} to $[z]_y$ if $[z]_y \neq 0$ and there is no cycle $z'\in\Zgroup{}(X_{\Pred{y}})$ such that $[z']_y=[z]_y$.
Furthermore, $y$ \emph{gives death} to $[z]_{\Pred{y}}$ for a cycle $z \in \Zgroup{}(X_{\Pred{y}})$ if $[z]_{\Pred{y}}\neq 0$ and $[z]_y = 0$.
We denote the birth-giving cells in $X$ by $\Birth{X}{}$ and the death-giving cells by $\Death{X}{}$.

\begin{proposition}[birth via last]
  \label{prop:birth-via-last}
  Let $z \in \Zgroup{}(X)$ be a non-zero cycle and $y = \last z$.
  Then $y$ gives birth to $[z]_y$.
  In particular, we have $y \in \Birth{X}{}$ and $\supp{z} \cap \Birth{X}{} \neq \emptyset$.
\end{proposition}
\begin{proof}
  First observe that $[z]_y\neq 0$.
  Indeed, we have $y\in\supp{z}$, and since $y = \last z$, the monotonicity of the filter implies that there is no cofacet of $y$ in $X_y$.
  Hence, $z$ cannot be a boundary in $X_y$.
  To prove that $y$ gives birth to $[z]_y$, we still need to verify that there is no cycle $z' \in \Zgroup{}(X_{\Pred{y}})$ with $[z']_y=[z]_y$.
  Assume to the contrary that such a $z'$ exists.  
  Since $y = \last z$, we have $z = \theta y+r$,
  in which $\theta \in \field \setminus \{0\}$ and $\supp{r} \subseteq X_{\Pred{y}}$. 
  Since $[z']_y = [z]_y$, there is a chain $d \in \Cgroup{}(X_y)$
  such that  $z-z'=\partial d$. 
  Then $\theta y = z-r = z'+\partial d-r$.
  Thus, by Proposition~\ref{prop:support_of_chain}, we have 
  $y\in\supp{\theta y} = \supp{z'+\partial d-r} \subseteq \supp{z'} \cup \supp{\partial d} \cup \supp{r} \subseteq X_{\Pred{y}}$,
  which is a contradiction.
\end{proof}

\begin{proposition}[birth- and death-giving]
  \label{prop:birth-and_death-giving}
  Let $X$ be a filtered Lefschetz complex.
  We have the following properties.
  \begin{itemize}
    \item[(i)] $y\in \Birth{X}{}$ if and only if  $[\partial y]_{\Pred{y}}=0$,
    \item[(ii)]  $y\in \Death{X}{}$ if and only if $[\partial y]_{\Pred{y}}\neq 0$,
    \item[(iii)] $\Birth{X}{} \cap \Death{X}{} = \emptyset$ and $\Birth{X}{} \cup \Death{X}{} = X$.
  \end{itemize}
\end{proposition}
\begin{proof}
  Note that (iii) follows from (i) and (ii), so it suffices to prove the first two claims.
  We begin by proving the only if direction of (i).
  Observe that $[\partial y]_{\Pred{y}}=0$ implies the existence of a chain $c\in\Cgroup{}(X_{\Pred{y}})$ with $\partial c = \partial y$.
  It follows that $z = y-c \in \Zgroup{}(X_y)$ satisfies $\supp{y-z} \subseteq X_{\Pred{y}}$ and $y = \last z$.
  Hence, we get from Proposition~\ref{prop:birth-via-last} that $y$  gives birth to $[z]_y$ and $y \in \Birth{X}{}$.
  To prove the converse, assume $y \in \Birth{X}{}$. 
  By definition of birth-giving cell, we can select a cycle $z = \theta y+r$ for some $\theta \in \field \setminus \{0\}$ and $r \in \Cgroup{}(X_{\Pred{y}})$ such that $[z]_y \neq 0$ and there is no cycle $z' \in \Zgroup{}(X_{\Pred{y}})$ with $[z']_y = [z]_y$.
  Since $0 = \partial z = \theta \partial y + \partial r$, we get $\partial y = \partial d$ for $d = -\theta^{-1}r$.
  Hence, by Proposition~\ref{prop:support_of_chain}, we get $\supp{d} = \supp{r} \subseteq X_{\Pred{y}}$, which implies  $\partial y \in \Bgroup{}(X_{\Pred{y}})$; that is: $[\partial y]_{\Pred{y}} = 0$.

  \smallskip
  We continue by proving the if direction in (ii).
  Assume $y \in \Death{X}{}$. 
  By definition of death-giving cell, we can select a cycle $z \in \Zgroup{}(X_{\Pred{y}})$ with $[z]_{\Pred{y}}\neq 0$ and  $[z]_y= 0$. 
  It follows that  $z=\partial d$ for a chain $d\in\Cgroup{}(X_y)$.
  Then $d = \theta y+r$ for some $\theta \in \field \setminus \{0\}$ and $r \in \Cgroup{}(X_{\Pred{y}})$.
  Since $[z]_{\Pred{y}} = \theta[\partial y]_{\Pred{y}} + [\partial r]_{\Pred{y}}$ as well as $[\partial r]_{\Pred{y}} = 0$ and $[z]_{\Pred{y}} \neq 0$, we get $[\partial y]_{\Pred{y}} \neq 0$, as required.
  To prove the converse, assume that $[\partial y]_{\Pred{y}} \neq 0$.
  Then $y$ gives death to $z = \partial y$, because $\partial y \in \Bgroup{}(X_y)$.
  It follows that $y \in \Death{X}{}$.
\end{proof}

To pair deaths with births, we consider 
a reduced form, $\matrixR$, of the boundary homomorphism matrix, $\Bd{}$.
To summarize and prove the key features of $\matrixR$, it will be convenient to write $\Delta y$, $\matrixR y$, and $V y$ for the columns $y$ in these matrices.
\begin{proposition}[birth and death via reduced matrix]
  \label{prop:birth_and_death_via_reduced_matrix}
  Let $\matrixR = \Bd{} \cdot \matrixV$ be a reduced form of $\Bd{}$. 
  Then
  \begin{itemize}
    \item[(i)] $y \in \Birth{X}{}$ if and only if $\matrixR y = 0$, and $y \in \Death{X}{}$ if and only if $\matrixR y \neq 0$;
    \item [(ii)] if $y\in \Birth{X}{}$, then $\matrixV y$ is a cycle and $y$ gives birth to $[\matrixV y]_y$;
    \item [(iii)] if $y\in \Death{X}{}$, then $\matrixR y$ is a cycle, $\lowmap_{\matrixR}(y)$ give birth, and $y$ gives death to it.
  \end{itemize}
\end{proposition}
\begin{proof}
  First observe that for any $y \in X$, we have
  \begin{equation}
    \label{eq:reduced-form-1}
    [\matrixR y]_{\Pred{y}}=[\Bd{}y]_{\Pred{y}}.
  \end{equation}
  Indeed, $\matrixR y-\Bd{}{y}=\Bd{} \cdot (\matrixV y-y)$. 
  Since $V-\Id$ is strictly upper triangular, we get $\last(\matrixV y-y)\ltf y$.
  Hence, $\matrixR y-\Bd{}{y}$ is a boundary in $X_{\Pred{y}}$, which proves \eqref{eq:reduced-form-1}.

  \smallskip
  We begin with the if direction of the first equivalence in (i), so assume $y \in \Birth{X}{}$. 
  Then $[\Bd{}y]_{\Pred{y}} = 0$ by Proposition~\ref{prop:birth-and_death-giving}~(i), and $[\matrixR y]_{\Pred{y}}=0$ by \eqref{eq:reduced-form-1}. 
  Hence, there is a chain $c\in\Cgroup{}(X_{\Pred{y}})$ with $\matrixR y = \Bd{}c$. 
  Since $\matrixV-\Id$ is strictly upper triangular, the determinant of $\matrixV$ is one, so $\matrixV$ is invertible. 
  Therefore, there is a chain $u$ such that $c = \matrixV u$. 
  It follows that $u = \sum_{x<y} \theta_x x \in \Cgroup{}(X_{\Pred{y}})$ for some $\theta_x \in \field$.
  Hence,
  \begin{equation}
    \label{eq:reduced-form-2}
    \matrixR y = \Bd{}c = \Bd{} \cdot \matrixV u = \matrixR u = \sum_{x < y, \matrixR x \neq 0} \theta_x \matrixR x.
  \end{equation}
  But the non-zero columns of $\matrixR$ are linearly independent, because their pivots are in different rows. 
  Therefore \eqref{eq:reduced-form-2} is possible only if $\matrixR y$ and all $\theta_x$  are zero. 
  This proves that $y\in \Birth{X}{}$ implies $\matrixR y=0$.
  To prove the converse, assume that $\matrixR y=0$. 
  Then, we get $[\Bd{}y]_{\Pred{y}} = 0$ from \eqref{eq:reduced-form-1}. 
  Hence $y \in \Birth{X}{}$ by Proposition~\ref{prop:birth-and_death-giving}~(i). 
  This completes the proof of the first equivalence in (i).
  Combining it with Proposition~\ref{prop:birth-and_death-giving}~(iii), we get the second equivalence in (i).

  \smallskip
  To see (ii) consider a cell $y \in \Birth{X}{}$. 
  Property (i) implies $\matrixR y = 0$.
  Since $\matrixR = \Bd{} \cdot \matrixV$, we get $\Bd{} \cdot \matrixV y=0$; hence, $\matrixV y$ is a cycle. Let $r = \matrixV y-y$.
  Since $\matrixV-\Id$ is strictly upper triangular, we have $\supp{r} \subseteq X_{\Pred{y}}$.
  Thus, $y=\last \matrixV y$ and we conclude that $y$ gives birth to $[\matrixV y]_y$ from Proposition~\ref{prop:birth-via-last}.

  \smallskip
  To see (iii) consider a cell $y \in \Death{X}{}$.
  By \eqref{eq:reduced-form-1} and Proposition~\ref{prop:birth-and_death-giving}~(ii), we have $[\matrixR y]_{\Pred{y}} \neq 0$. 
  Furthermore, $[\matrixR y]_{\Pred{y}} = [\Bd{}y]_{\Pred{y}}$ implies $[\matrixR y]_y = [\Bd{}y]_y$. 
  Since $\Bd{}y$ is a boundary in $X_y$, we have $[\Bd{}y]_y=0$, which yields $[\matrixR y]_y=0$ and proves that $y$ gives death to $[\matrixR y]_{\Pred{y}}$.
  By Proposition~\ref{prop:birth-via-last} $[\matrixR y]_{\Pred{y}}$ is given birth by $\last \matrixR y = \lowmap_\matrixR y$.  
\end{proof}

\subsection{Birth-death Pairs}
\label{sec:5.2}%

Let $\matrixR$ be a reduced form of $\Bd{}$. 
By Proposition~\ref{prop:birth_and_death_via_reduced_matrix}, the partial map $\lowmap_\matrixR \colon X \pto X$ can be considered as a map $\lowmap_\matrixR \colon \Death{X}{} \to \Birth{X}{}$.  
Moreover, Lemma~\ref{lem:reduced_matrices}~(i) proves that the set of low pairs, $\LP{\matrixR}$, and therefore $\lowmap_\matrixR$, do not depend on a particular choice of $\matrixR$.
We therefore have a well defined map $\lowmap \colon \Death{X}{} \to \Birth{X}{}$.
Observe that $\lowmap$ is injective, because the rows of any two different pivots in a reduced matrix are different.
It is however not necessarily bijective, since there may be cells in $\Birth{X}{}$ that never die, namely the ones that represent the homology of $X$.
This map motivates the following definition. 
\begin{definition}[birth-death pairs]
  \label{dfn:birth-death_pairs}
  Let $\filter \colon X \to \Rspace$ be a filter on a Lefschetz complex.
  Then $(s, t) \in X \times X$ is a \emph{birth-death pair} of $\filter$ if $\lowmap(t) = s$.
\end{definition}

We denote the set of all birth-death pairs of $\filter$ by $\BD{\filter}$.
Hence, $\BD{\filter} = \LP{\matrixR}$ for any reduced form of the boundary homomorphism matrix, $\Bd{}$.
Moreover, in view of Lemma~\ref{lem:reduced_matrices}~(ii), we have the following proposition. 
\begin{proposition}[birth-death pairs]
  \label{prop:birth-death_pairs}
  Let $(X, \filter)$ be a filtered Lefschetz complex.
  Then $\BD{\filter} = r_{\Bd{}}^{-1}(1)$, in which $\Bd{}$ is the matrix of the boundary homomorphism and $r_{\Bd{}}$ is defined by \eqref{eq:r}.  \qed
\end{proposition}
%

\subsection{Shallow Pairs} 
\label{sec:5.3}%

Shallow pairs are a special type of birth-death pairs, which may be used to form a combinatorial gradient.
During the last decade, they have been independently defined in different contexts \cite{Henselman, Lampret}; see the introduction of \cite{Bau21} for a detailed summary, and more recently they have various applications, such as to shape reconstruction \cite{BaRo24}, adaptive sorting \cite{RSWa24}, and geodesic spaces \cite{BaRo22}.
The reason why we introduce a new name will be clarified in Section~\ref{sec:6.2}.
\begin{definition}[shallow pairs]
  \label{dfn:shallow_pairs}
  Let $X$ be a filtered Lefschetz complex with filter $\filter \colon X \to \Rspace$.
  Given a cell $t \in X$ and a facet $s$ of $t$, we say that pair $(s, t) \in X \times X$ is \emph{shallow} if $x \leqf s$ for all facets $x$ of $t$ and $y \geqf t$ for all cofacets $y$ of $s$.
  In words, $\filter$ reaches its maximum on $\bd t$ at $s$ and its minimum on $\cbd s$ at $t$.
\end{definition}
We denote the set of all shallow pairs of $\filter$ by $\SH{\filter}$. 
The following straightforward proposition reformulates Definition~\ref{dfn:shallow_pairs} of shallow pairs
in terms of $\Bd{}$.
\begin{proposition}[shallow pairs \cite{Bau21}]
  \label{prop:shallow_pairs}
  Let $\filter \colon X \to \Rspace$ be a filter, and $\Bd{}$ its boundary matrix, with rows and columns ordered by the filter.
  Then, $(s,t) \in \SH{\filter}$ if and only if $\Bd{}[s,t] \neq 0$, $\Bd{}[s',t] = 0$ for all $s'$ such that $\filter(s') > \filter(s)$, and $\Bd{}[s,t'] = 0$ for all $t'$ such that $\filter(t') < \filter(t)$.
\end{proposition}
Proposition~\ref{prop:shallow_pairs} leads to the following observation. 
\begin{proposition}[shallow pairs exist]
  \label{prop:shallow_pairs_exist}
  Let $(X,\filter)$ be a filtered Lefschetz complex. 
  If $X$ is not boundaryless, then $\SH{\filter} \neq \emptyset$. 
\end{proposition}
\begin{proof}
  Since $X$ is not boundaryless, the matrix $\Bd{}$ is non-zero. 
  Let $t$ be the first non-zero column and $s$ the row index of its pivot. 
  Then $(s,t)\in\SH{\filter}$, which is straightforward to check.
\end{proof}
The following two properties of shallow pairs explain why they form a useful link between persistent homology and discrete Morse theory.
\begin{proposition}[shallow implies birth-death \cite{Bau21,DRS15}]
  \label{prop:shallow_implies_birth-death}
  Let $\filter \colon X \to \Rspace$ be a filter on a Lefschetz complex. 
  Then $\SH{\filter} \subseteq \BD{\filter}$.
\end{proposition}
\begin{proof}
  It is easy to check that $(s,t)$ being a shallow pair implies $r_{\Bd{}}(s,t) = 1$. 
  Hence, the claim follows from Proposition~\ref{prop:birth-death_pairs}.
\end{proof}
\begin{proposition}[shallow pairs form combinatorial gradient \cite{Bau21,DRS15}]
  \label{prop:shallow_pairs_form_combinatorial_gradient}
  Let $\filter \colon X \to \Rspace$ be a filter on a Lefschetz complex.
  Then $\SH{\filter}$ is a combinatorial gradient. 
\end{proposition}
\begin{proof}
   By the very definition of a shallow pair, the set $\SH{\filter}$ is a collection of vectors on $X$. 
   Consider two different vectors $(s,t),(s',t') \in \SH{\filter}$.
   Neither $s=s'$ nor $t=t'$, because $(s,t)$ and $(s',t')$ are both shallow.
   The only remaining way to share a cell is if $s=t'$ or $s'=t$. 
   Assume $s = t'$.
   Then $\bmap(t,s) \cdot \bmap(s,s') \neq 0$, so \eqref{eqn:Lefschetzcondition} implies that there exists a cell $x \in X$ such that $\bmap(t,x) \cdot \bmap(x,s')\neq 0$.
   Thus, if $\filter(s)>\filter(x)$, then $(s,s')=(t',s')$ is not shallow and if $\filter(s)<\filter(x)$, then $(s,t)$ is not shallow,
   a contradiction. We exclude $s'=t$ in an analogous way.

   \smallskip
   Hence, $\SH{\filter}$ is a combinatorial vector field.
   Set $\bar{\filter}(x)$ to be $\min(\filter(s),\filter(t))$,
   if $x$ belongs to a vector $(s,t) \in \SH{\filter}$, and to $\filter(x)$, otherwise. 
   It is straightforward to check that $\bar{\filter}$
   is a discrete Morse function for $\SH{\filter}$. 
   Therefore, $\SH{\filter}$ is a combinatorial gradient by Lemma~\ref{lem:Morse_function}.
\end{proof}

\section{Cancellation of Shallow Pairs}
\label{sec:6}

We have seen in Section~\ref{sec:3.4} that a cancellation preserves the homology, but it may affect the filter and its birth-death pairs; that is:
the restriction of the old filter on the remaining cells is not necessarily a valid filter on the new Lefschetz complex.
We will see that canceling a shallow pair does not share this drawback.

\subsection{Preservation of Filters}
\label{sec:6.1}%

We introduce notation based on denoting a birth-death pair by a single character.
Given a filtered Lefschetz complex, $(X, \filter)$, and a birth-death pair, $\chi \in \BD{\filter}$, we write $\bth{\chi}$ and $\dth{\chi}$ for its birth- and death-giving cells, so $\chi = (\bth{\chi}, \dth{\chi})$.
Given a shallow pair, $\sigma \in \SH{\filter}$, we write $X^\sigma$ for the quotient complex after canceling $\sigma$, and $\bmap^\sigma = \bmap|_{X^\sigma \times X^\sigma}$ and $\filter^\sigma = f|_{X^\sigma}$ for the restrictions of the two maps.
Using this notation, we prove a key property of shallow pairs.

\begin{lemma}[shallow cancellations preserve filter]
  \label{lem:shallow_cancellations_preserve_filter}
  Let $\filter \colon X \to \Rspace$ be a filter on a Lefschetz complex, and $\sigma \in \SH{\filter}$ a shallow pair.
  Then $\filter^\sigma$ is a filter on $X^\sigma$. 
  Moreover, if $V \subseteq \SH{\filter}$, then $\filter|_{X/V}$ is a filter on the quotient complex $X/V$.
\end{lemma}
\begin{proof}
  It suffices to show that $\bmap^\sigma (y,x) \neq 0$ implies $x \ltf y$, so assume $\bmap^\sigma (y,x) \neq 0$.
  If we also have $\bmap(y,x) \neq 0$, then $x \ltf y$ because $\filter$ is a filter on $X$.
  On the other hand, if $\bmap(y,x) = 0$, then $\bmap^\sigma (y,x) \neq 0$ together with~\eqref{eqn:incidenceupdate} imply that $\bmap(\dth{\sigma},x) \neq 0 \neq \bmap(y,\bth{\sigma})$.
  Since $\sigma$ is shallow, we have $\dth{\sigma} \ltf y $ and $x \ltf \bth{\sigma}$, and since $\bth{\sigma} \ltf \dth{\sigma}$, we have $x \ltf y$ also in this case.
  This proves that $\filter^\sigma = \filter|_{X^\sigma}$ is a filter on $X^\sigma$.

  \smallskip
  Finally, if $V\subseteq\SH{\filter}$, then $V$ is a combinatorial gradient by Proposition~\ref{prop:subset_of_combinatorial_gradient}.
  Therefore, $X/V$ is a well defined Lefschetz complex and the claimed property of $X/V$ and $\filter|_{X/V}$ follows by induction in the cardinality of $V$. 
\end{proof}
Furthermore, the cancellation of a shallow pair does not affect the other birth-death pairs.
More precisely, we have:
\begin{theorem}[canceling a shallow pair]
  \label{thm:canceling_a_shallow_pair}
  Let $\filter \colon X \to \Rspace$ be a filter on a Lefschetz complex, and $\sigma \in \SH{\filter}$ a shallow pair.
  Then $\SH{\filter^\sigma} \supseteq \SH{\filter} \setminus \{\sigma\}$ and $\BD{\filter^\sigma} = \BD{\filter} \setminus \{\sigma\}$.
  In particular, $\BD{\filter^\sigma}\cap\SH{\filter}\subseteq\SH{\filter^\sigma}$.
\end{theorem}
\begin{proof}
  Assuming $\sigma$ and $\tau$ are shallow pairs, we first prove that $\tau$ remains shallow if we cancel $\sigma$.
  Note that $\tau$ stops being shallow if, after canceling $\sigma$, either (i) $\bth{\tau}$ is no longer a facet of $\dth{\tau}$, or (ii) there is  a new facet of $\dth{\tau}$ succeeding $\bth{\tau}$, or (iii) there is a new cofacet of $\bth{\tau}$ preceding $\dth{\tau}$ in the filter.
  We disprove all three cases.

  \smallskip
  To disprove (i), observe that $\bmap(\dth{\tau},\bth{\tau}) \neq 0$ and \eqref{eqn:incidenceupdate} imply that $\bth{\tau}$ stops being a facet of $\dth{\tau}$ after canceling $\sigma$ only if $\bmap(\dth{\sigma},\bth{\tau}) \neq 0 \neq  \bmap (\dth{\tau}, \bth{\sigma})$.
  Since $\sigma$ is shallow, we have $\bth{\tau} \ltf \bth{\sigma} \ltf \dth{\sigma} \ltf \dth{\tau}$, which contradicts that $\tau$ was shallow prior to the cancellation.
  To disprove (ii), assume there exists a cell $u$ with $\bmap^\sigma(\dth{\tau}, u) \neq 0$ and $\bth{\tau} \ltf u$.
  Since $\tau$ is shallow before the cancellation, we have $\bmap(\dth{\tau},u)=0$.
  Thus, from \eqref{eqn:incidenceupdate} we get $\bmap(\dth{\tau},\bth{\sigma})\neq 0\neq \bmap(\dth{\sigma},u)$.
  However, $\bmap(\dth{\tau},\bth{\sigma})\neq 0$ implies $\bth{\sigma} \ltf \bth{\tau}$, and $\bmap(\dth{\sigma},u)\neq 0$ implies $u \ltf \bth{\sigma}$.
  Hence, $u \ltf \bth{\tau}$, which is a contradiction that proves (ii) cannot happen.
  We disprove (iii) with a symmetric argument.

  \smallskip
  We now show that canceling $\sigma$ does not affect the remaining birth-death pairs either.
  Canceling $\sigma$ means we remove $\bth{\sigma}, \dth{\sigma}$ from $X$ and modify the incidence map, $\bmap$, according to \eqref{eqn:incidenceupdate} in Definition~\ref{dfn:cancellation}.
  Write $\Bd{}$ and $\Bd{}^\sigma$ for the matrices of the boundary homomorphisms in $X$ and $X^\sigma$, respectively, and note that $\Bd{}[\bth{\tau},\dth{\tau}]=\bmap(\dth{\tau},\bth{\tau})$ and  $\Bd{}^\sigma[\bth{\tau},\dth{\tau}] = \bmap^\sigma(\dth{\tau}, \bth{\tau})$ by \eqref{eq:bd-kappa}.
  In terms of the matrix $\Bd{}$, the cancellation is a two step process: first modify the entries of $\Bd{}$ and second remove the rows and columns indexed by $\bth{\sigma}$ and $\dth{\sigma}$ to get $\Bd{}^\sigma$.
  The first step applies \eqref{eqn:incidenceupdate} to all entries of $\Bd{}$, which for cells $\bth{\tau}$ and $\dth{\tau}$ can be written as adding a multiple of column $\dth{\sigma}$ to column $\dth{\tau}$ in $\Bd{}$, which can be written as a column operation:
  \begin{equation}
    \label{eq:canceling_a_shallow_pair-1}
      \Bd{}[., \dth{\tau}] = \Bd{}[., \dth{\tau}] 
          - \left[ \Bd{}[\bth{\sigma}, \dth{\sigma}]^{-1} \Bd{}[\bth{\sigma}, \dth{\tau}] \right] \cdot \Bd{}[., \dth{\sigma}].
  \end{equation}
  Because $\sigma$ is shallow, this is a left-to-right column operation, so the map $r_{\Bd{}}$ defined in \eqref{eq:r} is preserved by Lemma~\ref{lem:reduced_matrices}~(i).
  Since \eqref{eq:canceling_a_shallow_pair-1} is applied to every column that satisfies $\Bd{} [\bth{\sigma}, \dth{\upsilon}] \neq 0$, the only non-zero entry in the row of $\bth{\sigma}$ that remains is the entry in column $\dth{\sigma}$.
  In other words, the matrix is fully reduced at $\dth{\sigma}$.
  By Lemma~\ref{lem:reduced_matrices}~(iii), we can therefore remove the rows and columns of $\bth{\sigma}$ and $\dth{\sigma}$ without otherwise affecting the preimage of $r_\Delta$.
  By Proposition~\ref{prop:birth-death_pairs}, we have $\BD{\filter}=r_{\Bd{}}^{-1}(1)$ and $\BD{\filter^\sigma}=r_{\Bd{}^\sigma}^{-1}(1)$, which implies
  \[
    \BD{\filter^\sigma} = r_{\Bd{}^\sigma}^{-1}(1)
         = r_{\Bd{}}^{-1}(1) \setminus \{\bth{\sigma}, \dth{\sigma}\}=\BD{\filter} \setminus \{\bth{\sigma}, \dth{\sigma}\}.
  \]
  Finally, observe that $\BD{\filter^\sigma} \cap \SH{\filter} = \left[ \BD{\filter} \setminus \{\sigma\} \right] \cap \SH{\filter}$.
  The latter set is a subset of $\SH{\filter} \setminus \{\sigma\}$, which in turn is a subset of $\SH{\filter^\sigma}$.
\end{proof}

We now introduce two not necessarily different special shallow pairs, which will become useful later. 

\begin{definition}[special shallow pairs]
  \label{def:special-shallow-pairs}
   Given a filtered Lefschetz complex, $(X,\filter)$, the \emph{birth maximizing pair} of $\filter$, denoted $\alphaPair{\filter}$, is the unique birth-death pair $\alpha \in \BD{\filter}$ that maximizes $\filter(\bth{\alpha})$.
   Symmetrically, the \emph{death minimizing pair} of $\filter$, denoted $\omegaPair{\filter}$, is the unique birth-death pair $\omega \in \BD{\filter}$ that minimizes $\filter(\dth{\omega})$.
\end{definition}

It is easy to see that both $\alphaPair{\filter}$ and $\omegaPair{\filter}$ are shallow.
Another useful property of these special pairs is that they keep their status while other shallow pairs are canceled.
To state this property more precisely, recall that $\bd \dth{\sigma}$ is the boundary of $\dth{\sigma}$, as defined in \eqref{eq:bd}, and $\cbd \bth{\sigma}$ is the coboundary of $\bth{\sigma}$, as defined in \eqref{eq:cbd}.

\begin{proposition}[keeping status]
  \label{prop:keeping-status}
  Let $\sigma,\tau\in\SH{\filter}$ be two different shallow pairs. 
  If $\sigma = \alphaPair{\filter}$, then canceling $\tau$ does not affect $\cbd \bth{\sigma}$ and $\alphaPair{\filter^\tau} = \alphaPair{\filter}$.
  If $\tau = \omegaPair{\filter}$, then canceling $\sigma$ does not affect $\bd \dth{\tau}$ and $\omegaPair{\filter^\tau} = \omegaPair{\filter}$.
\end{proposition}
\begin{proof}
  We prove the first claim while noting that the symmetric argument can be used to prove the second claim.
  Therefore assume $\sigma = \alphaPair{\filter}$, write $\alpha = \alphaPair{\filter^\tau}$, and assume $\alpha \neq \sigma$ to get a contradiction.
  We have $\alpha \in \SH{\filter^\tau}$ and $\filter(\bth{\alpha}) > \filter(\bth{\sigma})$, and by Theorem~\ref{thm:canceling_a_shallow_pair} also $\alpha \in \BD{\filter}$.
  But we cannot have $\alpha \in \SH{\filter}$ for else $\alphaPair{\filter} \neq \sigma$.
  So $\alpha$ gets shallow only after canceling $\tau$.
  We have $\filter(\bth{\alpha}) > \filter(\bth{\sigma})$ because $\alpha=\alphaPair{\filter^\tau}$, and $\sigma \in \SH{\filter^\tau}$ by Theorem~\ref{thm:canceling_a_shallow_pair}.
  We also have $\filter(\bth{\sigma}) > \filter(\bth{\tau})$ because $\sigma=\alphaPair{\filter}$.
  Hence, $\filter(\bth{\alpha}) > \filter(\bth{\tau})$ and therefore
  $\bmap(\dth{\tau},s) = 0$ for $s \in X$ such that $\filter(s) \geq \filter(\bth{\alpha})$.
  In particular, we have $\bmap(\dth{\tau}, \bth{\alpha}) = 0$.
  By Proposition~\ref{prop:incidences_after_cancellation}~(ii),
  canceling $\tau$ therefore neither affects $\cbd \bth{\alpha}$
  nor $\bmap(\dth{\alpha},s)$ for all $s \in X$ such that $\filter(s) > \filter(\bth{\alpha})$.
  Since $\alpha$ is shallow after canceling $\tau$, this implies that it must have been shallow already before, which is the desired contradiction. 
  Moreover, since $\filter(\bth{\sigma}) > \filter(\bth{\tau})$ implies $\bmap(\dth{\tau},\bth{\sigma}) = 0$,
  we also see that canceling $\tau$ does not affect $\cbd \bth{\sigma}$.
\end{proof}

\subsection{Gradation of Birth-death Pairs}
\label{sec:6.2}%

Let $(X,\filter)$ be a filtered Lefschetz complex. 
By Proposition~\ref{prop:shallow_implies_birth-death}, we have $\SH{\filter}\subseteq\BD{\filter}$, and by Proposition~\ref{prop:shallow_pairs_form_combinatorial_gradient}
we conclude that $\SH{\filter}$ is a combinatorial gradient. 
Let $V$ be a subset of $\SH{\filter}$, which by Proposition~\ref{prop:subset_of_combinatorial_gradient} is also a combinatorial gradient on $X$.
The quotient, $X/V$ is again a Lefschetz complex, and by Lemma~\ref{lem:shallow_cancellations_preserve_filter}, the restriction of filter $\filter$ to $X/V$ is a filter on $X/V$. 
This construction may be iterated, which leads to a recursive definition of the sequences $(X_i)_i, (\filter_i)_i, (V_i)_i$ of Lefschetz complexes, filters, and combinatorial gradient given by:
\begin{align}
  X_0 &= X; \hspace{0.5in} \filter_0 = \filter; \hspace{0.58in} V_0 \subseteq \SH{\filter}; \\
  X_{i+1} &= X_i / V_i; \quad \filter_{i+1} = \filter_i|_{X_{i+1}}; \quad V_{i+1} \subseteq \SH{\filter_{i+1}} .
\end{align}
We call $(V_i)_i$ is a \emph{sequence of shallow cancellations} and $(X_i,\filter_i)_i$ the \emph{associated} sequence of filtered Lefschetz complex.
We next explain how a sequences of shallow cancellations can be used.
\begin{theorem}[shallow cancellations]
  \label{thm:grading_of_birth-death_pairs}
  Let $\filter \colon X \to \Rspace$ be a filter on a Lefschetz complex,
  and $(V_i)_i$ a sequence of shallow cancellations, with associated sequence of filteed Lefschetz complexes, $(X_i,\filter_i)_i$.
  Assuming $V_i \neq \emptyset$ unless $X_i$ is boundaryless, there exists a non-negative integer $n$ such that $X_n$ is boundaryless and
  \begin{align}
  \label{eq:grading_of_birth-death_pairs}
        \BD{\filter} = V_0 \sqcup V_1 \sqcup \ldots \sqcup V_{n-1}
  \end{align}
  is a decomposition of $\BD{\filter}$ into mutually disjoint sets.
\end{theorem}
\begin{proof}
    Applying Theorem~\ref{thm:canceling_a_shallow_pair} to all pairs in $V_i$, in any arbitrary order, we get $\BD{f_{i+1}} = \BD{\filter_i} \setminus V_i$ or, equivalently, $\BD{\filter_i} = \BD{f_{i+1}} \sqcup V_i$, because $V_i \subseteq \SH{f_i} \subseteq \BD{\filter_i}$ by Proposition~\ref{prop:shallow_implies_birth-death}.
    Since $V_i \neq \emptyset$ as long as $X_i$ is not boundaryless, the families $\BD{\filter_i}$ form a strictly decreasing sequence, as implied by Proposition~\ref{prop:shallow_pairs_exist}.
    By Definition~\ref{dfn:Lefschetz_complex} $X$ is finite, so $\BD{\filter}$ is finite.
    Hence, there must be an index $n$ such that $X_n$ is boundaryless.
    The claimed formula \eqref{eq:grading_of_birth-death_pairs} follows now by induction.
\end{proof}

An important special case is the \emph{greedy sequence} of shallow cancellations defined by $V_i = \SH{\filter_i}$.
Applying Theorem~\ref{thm:grading_of_birth-death_pairs} to a greedy sequence, we get:

\begin{corollary}[grading of birth-death pairs]
  \label{cor:grading_of_birth-death_pairs}
  Let $\filter \colon X \to \Rspace$ be a filter on a Lefschetz complex, and $(X_i, \filter_i)_i$ its associated sequence of filtered Lefschetz complexes. 
  Then there exists a positive integer $m$, such that $X_m$ is boundaryless and $\BD{\filter} = \SH{f_0} \sqcup \SH{f_1} \sqcup \ldots \sqcup \SH{f_{m-1}}$.
\end{corollary}

By Corollary~\ref{cor:grading_of_birth-death_pairs}, for each birth-death pair $\chi \in \BD{\filter}$, there exists a unique index, $i$, such that $\chi \in \SH{f_i}$. 
We call this index the \emph{depth} of $\chi$.
In particular, shallow pairs of $\filter$ are birth-death pairs of depth zero.

\subsection{Shallow Cancellation Orders}
\label{sec:6.3}

The opposite of the greedy cancellation sequence are the lazy cancellation sequences, in which we select only one birth-death pair in each combinatorial gradient.
This leads to the following concept. 
\begin{definition}[shallow cancellation order]
  \label{dfn:shallow_cancellation_order}
  Let $(X,\filter)$ be a filtered Lefschetz complex, and write $n = \card{\BD{\filter}}$.
  A \emph{shallow cancellation order} of $\filter$, or \emph{shallow order} for short, is a sequence $\CLOphi = (\varphi_1, \varphi_2, \ldots, \varphi_n)$ of all birth-death pairs such that  $(V_0,V_1,\ldots,V_{n-1})$ with $V_i = \{\varphi_{i+1}\}$ is a sequence of shallow cancellations.
\end{definition}

In other words, in a shallow order each birth-death pair is shallow at the time is gets canceled.
While $\CLOphi$ is formally defined as a sequence, we identify it with the linear order $\leqr{\Phi}$ on $\BD{\filter}$, which consists of all pairs $\varphi_i \leqr{\Phi} \varphi_j$ with $i \leq j$.
Given $\CLOphi$, we write $(\quotient{\CLOphi}{i},\filteri{\CLOphi}{i})_i$ for the associated sequence of filtered Lefschetz complexes.
Thus, $\quotient{\CLOphi}{i}$ is the quotient complex obtained after canceling the first $i$ birth-death pairs, and  $\filteri{\CLOphi}{i}$ is the restriction of $\filter$ to $\quotient{\CLOphi}{i}$.

\smallskip
For example, there are three sequences in which the birth-death pairs of the filter in Example~\ref{ex:pentagon_3} can be canceled so that each cancellation is that of a shallow pair:
\begin{align*}
  ({ \vd}, { \ec}), ({ \va}, { \ea}), ({ \vc}, { \eb}), ({ \vb}, { \ed})&; \\
  ({ \va}, { \ea}), ({ \vd}, { \ec}), ({ \vc}, { \eb}), ({ \vb}, { \ed})&; \\
  ({ \vd}, { \ec}), ({ \vc}, { \eb}), ({ \va}, { \ea}), ({ \vb}, { \ed})&.
\end{align*} 
Shallow cancellation orders will be used to define the depth poset, but prior to that, we discuss properties of these orders that help us understand their behavior.

\begin{proposition}[existence of shallow  orders]
  \label{prop:Existence of cancelable orderings}
  For every shallow pair, $\sigma \in \SH{\filter}$, there exists a shallow order that starts with $\sigma$.
\end{proposition}
\begin{proof}
  Letting $n = \card{\BD{\filter}}$, we construct a shallow order $\CLOphi = (\varphi_1, \varphi_2, \ldots, \varphi_n)$, together with the associated filtered Lefschetz complexes $(X_i,\filter_i)$. 
  To start, we set $\varphi_1 = \sigma$ and $X_1 = X/\sigma$.
  Assuming $\varphi_i$ and $X_i$ already been defined for $i < k \leq n$, we use Proposition~\ref{prop:shallow_pairs_exist} to select a shallow pair $\varphi_k$ in $X_{k-1}$ and set $X_k= X_{k-1}/\varphi_k$.
\end{proof}

The following lemma and subsequent corollary address the question how transpositions affect shallow orders.

\begin{lemma}[transposition in shallow orders]
  \label{lem:transposition}
  Let $\CLOphi = (\varphi_1, \varphi_2, \ldots, \varphi_n)$ be a shallow order on a filtered Lefschetz complex, $X$, and $k$ an integer between $1$ and $n-1$.
  Then the transposed order $\CLOpsi = (\varphi_1, \ldots, \varphi_{k+1}, \varphi_k, \ldots, \varphi_n)$
  is a shallow cancellation order  if and only if  $\varphi_{k+1}$ is shallow in $\quotient{\CLOphi}{k-1}$.
  Moreover, if these two (equivalent) conditions hold, then $\quotient{\CLOphi}{i} = \quotient{\CLOpsi}{i}$ for $i \neq k$.
\end{lemma}
\begin{proof}
  Both orders are the same for $i < k$ and $i>k+1$.
  Thus, if $\CLOpsi$ is a shallow cancellation order, then $\varphi_{k+1}$ is shallow in $\quotient{\CLOpsi}{k-1} = \quotient{\CLOphi}{k-1}$.
  To show the converse,  assume that $\varphi_{k+1}$ is shallow in $\quotient{\CLOphi}{k-1}$.
  By the definition of shallow order, $\varphi_{k}$ is also shallow in $\quotient{\CLOphi}{k-1}$.
  By Propositions~\ref{prop:shallow_pairs_form_combinatorial_gradient} and \ref{prop:subset_of_combinatorial_gradient},
  $V = \{\varphi_{k},\varphi_{k+1}\}$ is a combinatorial gradient in $\quotient{\CLOphi}{k-1}$.
  Applying Theorem~\ref{thm:global_cancellation} to $V$, we get $\quotient{\CLOphi}{k+1} = \quotient{\CLOpsi}{k+1}$.
  This implies that $\CLOpsi$ is a shallow order and $\quotient{\CLOphi}{i} = \quotient{\CLOpsi}{i}$ also for $i>k+1$.
\end{proof}

\begin{corollary}[shift in shallow orders]
  \label{cor:shift}
  Let $\CLOlambda = (\lambda_1, \lambda_2, \ldots, \lambda_n)$ be a shallow order of the birth-death pairs of $\filter \colon X \to \Rspace$, and assume $\sigma = \lambda_j$ is shallow in $\quotient{\CLOlambda}{k-1}$ for some $k<j$.
  Then the sequence $\CLOsigma$ obtained from $\CLOlambda$ by moving $\sigma$ from position $j$ to position $k$ is also a shallow order, and $X^{\CLOsigma}_i = X^\Lambda_i$ for $i<k$ and $i \geq j$.
\end{corollary}
\begin{proof}
  Since $\sigma$ is shallow in $\quotient{\CLOlambda}{k-1}$, Theorem~\ref{thm:canceling_a_shallow_pair} implies that it is shallow in $X^\Lambda_i$ for $i=k,k+1,\ldots,j$.
  We can thus apply Proposition~\ref{lem:transposition} repeatedly to get the claimed property.
\end{proof}

In fact, we can obtain any shallow order from any other such order by a sequence of transpositions.
Let $X$ be a filtered Lefschetz complex and $\CLOphi = (\varphi_1, \varphi_2, \ldots, \varphi_n)$ and $\CLOpsi = (\psi_1, \psi_2, \ldots, \psi_n)$ two shallow cancellation orders.
Denote by $\Diff(\CLOphi,\CLOpsi)$ the set of integers enclosed by the minimum and maximum of the set $\{i\mid\varphi_i\neq\psi_i\}$.
By a \emph{chain connecting} $\CLOphi$ to $\CLOpsi$ we mean a sequence of shallow orders $\CLOphi = \CLOphi_0, \CLOphi_1, \ldots, \CLOphi_m = \CLOpsi$, such that any two consecutive orders differ by a single transposition of two adjacent birth-death pairs, and any such transposition at positions $k$, $k+1$ satisfies $k, k+1 \in \Diff(\CLOphi,\CLOpsi)$.

\begin{lemma}[connectivity of shallow orders]
  \label{lem:connectivity_by_transpositions}
  For any two shallow cancellation orders, $\CLOphi$ and $\CLOpsi$, of the birth-death pairs of a filter on a Lefschetz complex, there is a chain connecting $\CLOphi$ to $\CLOpsi$.
\end{lemma}
\begin{proof}
  Let $n$ be the number of birth-death pairs, fix $\CLOpsi$, and let $\pp(\CLOphi)$ be the minimum of $\Diff(\CLOphi,\CLOpsi)$ while setting $\pp(\CLOphi) = n$ if this set is empty.
  We prove the claim  by the induction on $\pp(\CLOphi)$.
  Clearly, if $\pp(\CLOphi) = n$, then $\CLOphi = \CLOpsi$, so $\CLOphi = \CLOphi_0 = \CLOpsi$ is chain connecting the two orders.

  \smallskip
  Next, we fix a $k < n$ and, assuming that the claim holds for $\pp(\CLOphi) > k$, prove that it holds also for $\pp(\CLOphi) = k$.
  By definition of $\pp(\CLOphi)$, $\varphi_k$ is the first birth-death pair in $\CLOphi$ that differs from $\psi_k$.
  Therefore, there is an index $j > k$ such that $\varphi_j = \psi_k$.
  Observe that $\varphi_j$ is shallow in $\quotient{\CLOphi}{k-1}$ because $\varphi_j = \psi_k$ is shallow in $\quotient{\CLOpsi}{k-1} = \quotient{\CLOphi}{k-1}$.
  By Corollary~\ref{cor:shift}, we can move $\varphi_j$ to position $k$ in a sequence of $j-k$ transpositions of adjacent birth-death pairs.
  This way we construct a chain from $\CLOphi$ to a shallow order, $\CLOphi'$, with $\pp(\CLOphi') > k$.
  By induction assumption $\CLOphi'$ admits a chain connecting $\CLOphi'$ to $\CLOpsi$.
  Inserting the $j-k$ shallow orders that connect $\CLOphi$ to $\CLOphi'$ in front of this chain, we obtain a chain connecting $\CLOphi$ to $\CLOpsi$.
\end{proof}

We know from Theorem~\ref{thm:global_cancellation}
that the quotient complex $X/\SH{\filter}$ is independent of the order in which the shallow pairs are canceled.
A similar result holds for shallow cancellation orders, as the following lemma shows.

\begin{lemma}[truncating a shallow order]
  \label{lem:truncating_a_shallow_cancellation_order}
  Let $X$ be a filtered Lefschetz complex, and $\CLOphi, \CLOpsi$ two shallow orders of its birth-death pairs.
  Then $\quotient{\CLOphi}{i} = \quotient{\CLOpsi}{i}$ for all $i \geq \max\Diff(\CLOphi,\CLOpsi)$.
\end{lemma}
\begin{proof}
  By Lemma~\ref{lem:connectivity_by_transpositions}, we have a chain $\CLOphi = \CLOphi_0, \CLOphi_1, \ldots, \CLOphi_m = \CLOpsi$ connecting $\CLOphi$ and $\CLOpsi$. 
  By definition of this chain, the indices $k$ and $k+1$ of any transposition in the chain are in $\Diff(\CLOphi,\CLOpsi)$.
  Thus, Lemma~\ref{lem:transposition} implies $\quotient{\CLOphi_j}{i} = \quotient{\CLOphi_{j+1}}{i}$ for any $i \geq \max\Diff(\CLOphi,\CLOpsi)$ and $j = 0, 1, \ldots, m-1$. 
  Therefore, $\quotient{\CLOphi}{i} = \quotient{\CLOpsi}{i}$ for all $i \geq \max\Diff(\CLOphi,\CLOpsi)$.
\end{proof}

Consider a filtered Lefschetz complex, $(X,\filter)$, and a shallow birth-death pair, $\sigma \in \SH{\filter}$. 
We say that $\sigma$ \emph{unblocks} $\chi \in \BD{\filter} \setminus \SH{\filter}$ if $\chi \in \SH{\filter^\sigma}$.

\begin{lemma}[uniqueness of unblocking]
  \label{lem:uniqueness-of-blocking}
  Given a non-shallow birth-death pair, $\chi \in \BD{\filter}$, there is at most one $\sigma\in\SH{\filter}$ that unblocks $\chi$.
\end{lemma}
\begin{proof}
  Assume to the contrary that there are different shallow pairs, $\sigma_1,\sigma_2\in\SH{\filter}$, that both unblock $\chi$.
  First observe that $\bmap(\dth{\sigma_2},\bth{\sigma_1})\neq 0$ and $\bmap(\dth{\sigma_1},\bth{\sigma_2})\neq 0$ cannot be true simultaneously, because this contradicts that both pairs are shallow.
  Thus, we may assume that $\bmap(\dth{\sigma_2},\bth{\sigma_1})=0$ or $\bmap(\dth{\sigma_1},\bth{\sigma_2})= 0$.
  We assume the case
  \begin{equation}
    \label{eq:uniqueness-of-blocking-1}
    \bmap(\dth{\sigma_2}, \bth{\sigma_1}) = 0,
  \end{equation}
  and refer to symmetry for the argument in the other case.
  Since $\chi$ is not shallow, there are three cases to be considered:
  \begin{itemize}
    \item[(i)] cell $\bth{\chi}$ is not a facet of $\dth{\chi}$; that is: $\bmap(\dth{\chi},\bth{\chi}) = 0$;
    \item[(ii)] $\bmap(\dth{\chi}, \bth{\chi}) \neq 0$ and $\dth{\chi}$ has a facet $u$ such that $\filter(\bth{\chi}) < \filter(u) < \filter(\dth{\chi})$;
    \item[(iii)] $\bmap(\dth{\chi},\bth{\chi})\neq 0$ and $\bth{\chi}$ has a cofacet $v$ such that $\filter(\bth{\chi}) < \filter(v) < \filter(\dth{\chi})$.
  \end{itemize}
  We will show that each of these cases leads to a contradiction.
  In case (i) we have $\bmap^{\sigma_1} (\dth{\chi}, \bth{\chi}) \neq 0$, because $\chi$ gets shallow after canceling $\sigma_1$.
  Because $\bmap(\dth{\chi},\bth{\chi}) = 0$, we can apply Proposition~\ref{prop:incidences_after_cancellation}~(i) and conclude that $\bmap(\dth{\chi}, \bth{\sigma_1}) \neq 0 \neq \bmap(\dth{\sigma_1}, \bth{\chi})$.
  Since $\sigma_1$ is shallow, we have $\filter(\bth{\chi}) < \filter(\bth{\sigma_1})$.
  Next apply Proposition~\ref{prop:incidences_after_cancellation}~(ii), in which $\dth{\chi}, \bth{\sigma_1}, \dth{\sigma_2}, \bth{\sigma_2}$ are $y, x, t, s$, in this sequence, which is justified by \eqref{eq:uniqueness-of-blocking-1}.
  This gives $\bmap^{\sigma_2} (\dth{\chi}, \bth{\sigma_1}) = \bmap(\dth{\chi}, \bth{\sigma_1})$ and therefore $\bmap^{\sigma_2} (\dth{\chi}, \bth{\sigma_1}) \neq 0$.
  But $\chi$ is shallow after canceling $\sigma_2$, which implies $\filter(\bth{\sigma_1}) < \filter(\bth{\chi})$, a contradiction.

  \smallskip
  In case (ii) $\chi$ is shallow after canceling $\sigma_1$, which implies $\bmap^{\sigma_1} (\dth{\chi}, u) = 0$. 
  But $\bmap(\dth{\chi},u) \neq 0$, so Proposition~\ref{prop:incidences_after_cancellation}~(i) applies and we get $\bmap(\dth{\chi}, \bth{\sigma_1}) \neq 0 \neq \bmap(\dth{\sigma_1}, u)$.
  Thus, $u$ is a facet of $\dth{\sigma_1}$, and since $\sigma_1$ is shallow, we get $\filter(\bth{\sigma_1}) \geq \filter(u) > \filter(\bth{\chi})$.
  Next apply Proposition~\ref{prop:incidences_after_cancellation}~(ii), in which $\dth{\chi}, \bth{\sigma_1}, \dth{\sigma_2}, \bth{\sigma_2}$ are $y, x, t, s$, in this sequence, which is justified by \eqref{eq:uniqueness-of-blocking-1}.
  This gives $\bmap^{\sigma_2}(\dth{\chi}, \bth{\sigma_1}) = \bmap(\dth{\chi}, \bth{\sigma_1})$.
  Hence, $\bmap^{\sigma_2} (\dth{\chi}, \bth{\sigma_1}) \neq 0$ because $\bmap(\dth{\chi}, \bth{\sigma_1}) \neq 0$; that is: $\bth{\sigma_1}$ is a facet of $\dth{\chi}$ after canceling $\sigma_2$.
  But $\chi$ is shallow after canceling $\sigma_2$, which implies $\filter(\bth{\sigma_1}) < \filter(\bth{\chi})$, a contradiction.

  \smallskip
  In case (iii) $\chi$ is shallow after canceling $\sigma_2$, which implies $\bmap^{\sigma_2} (v,\bth{\chi}) = 0$. 
  But $\bmap(v, \dth{\chi}) \neq 0$ so Proposition~\ref{prop:incidences_after_cancellation}~(i) applies and we get $\bmap(\dth{\sigma_2}, \bth{\chi}) \neq 0 \neq \bmap(v, \bth{\sigma_2})$.
  Thus, $v$ is a cofacet of $\bth{\sigma_2}$, and since $\sigma_2$ is shallow, $\filter(\dth{\sigma_2}) \leq \filter(v) < \filter(\dth{\chi})$.
  Next apply Proposition~\ref{prop:incidences_after_cancellation}~(ii), in which $\dth{\sigma_2}, \bth{\chi}, \dth{\sigma_1}, \bth{\sigma_1}$ are $y, x, t, s$, in this sequence, which is justified by \eqref{eq:uniqueness-of-blocking-1}.
  This gives $\bmap^{\sigma_1}(\dth{\sigma_2}, \bth{\chi}) = \bmap(\dth{\sigma_2}, \bth{\chi})$.
  Hence, $\bmap^{\sigma_1}(\dth{\sigma_2}, \bth{\chi}) \neq 0$ because $\bmap(\dth{\sigma_2}, \bth{\chi}) \neq 0$; that is: $\bth{\chi}$ is a facet of $\dth{\sigma_2}$ after canceling $\sigma_1$.
  But $\chi$ is shallow after canceling $\sigma_1$, which implies $\filter(\dth{\sigma_2}) > \filter(\dth{\chi})$, a contradiction.
\end{proof}

\Skip{\smallskip
  We will show that each of these cases leads to a contradiction.
  In case (i), we have $\bmap^{\sigma_1} (\dth{\chi}, \bth{\chi}) \neq 0$, because $\chi$ gets shallow after canceling $\sigma_1$.
  Since in the considered case $\bmap(\dth{\chi}, \bth{\chi}) = 0$, we get from Proposition~\ref{prop:incidences_after_cancellation}~(i) that $\bmap(\dth{\chi},\bth{\sigma_1})\neq 0$, that is $\bth{\sigma_1}$ is a facet of $\dth{\chi}$.
  Since $\sigma_1$ is shallow, we have $\filter(\bth{\chi}) < \filter(\bth{\sigma_1})$.
  By \eqref{eq:uniqueness-of-blocking-1}  and  Proposition\ref{prop:incidences_after_cancellation}~(ii) we see that $\bmap^{\sigma_2} (\dth{\chi}, \bth{\sigma_1}) = \bmap(\dth{\chi}, \bth{\sigma_1})$.
  Therefore, $\bth{\sigma_1}$ is a facet of $\dth{\chi}$ also after canceling $\sigma_2$ and since $\chi$ gets then shallow, 
  we have $\filter(\bth{\sigma_1}) < \filter(\bth{\chi})$, which yields a contradiction in case (i).

   \smallskip
   Consider case (ii). Since $\chi$ gets shallow after canceling $\sigma_1$, in view of $\filter(\bth{\chi}) < \filter(u)$, we must have $\bmap^{\sigma_1} (\dth{\chi},u) = 0$. 
   But, $\bmap(\dth{\chi},u) \neq 0$; hence Proposition~\ref{prop:incidences_after_cancellation}~(i) implies
   $\bmap(\dth{\chi}, \bth{\sigma_1}) \neq 0$ and $\bmap(\dth{\sigma_1}, u)\neq 0$.
   Thus, cell $u$ is a facet of $\dth{\sigma_1}$, and since $\sigma_1$ is shallow, we obtain 
   $\filter(\bth{\sigma_1}) > \filter(u) > \filter(\bth{\chi})$.
   From Proposition~\ref{prop:incidences_after_cancellation}~(ii) and \eqref{eq:uniqueness-of-blocking-1} we see that $\bmap^{\sigma_2}(\dth{\chi}, \bth{\sigma_1}) = \bmap(\dth{\chi}, \bth{\sigma_1})$.
   Hence, $\bmap^{\sigma_2} (\dth{\chi}, \bth{\sigma_1}) \neq 0$, because $\bmap(\dth{\chi}, \bth{\sigma_1}) \neq 0$.
   Thus, $\bth{\sigma_1}$ is a facet of $\dth{\chi}$ after canceling $\sigma_2$.
   But $\chi$ is shallow after canceling $\sigma_2$, which gives
   $\filter(\bth{\sigma_1}) < \filter(\bth{\chi})$ and yields a contradiction in case (ii).

   \smallskip
   Finally consider case (iii). 
   Since $\chi$ gets shallow after canceling $\sigma_2$,  we must have $\bmap^{\sigma_2} (v,\bth{\chi}) = 0$. 
   But, $\bmap(v,\dth{\chi}) \neq 0$; hence Proposition~\ref{prop:incidences_after_cancellation}~(i) implies
   $\bmap(\dth{\sigma_2}, \bth{\chi}) \neq 0$ and $\bmap(v, \bth{\sigma_2}) \neq 0$.
   Thus, cell $v$ is a cofacet of $\bth{\sigma_2}$ and since $\sigma_2$ is shallow, we obtain $\filter(\dth{\sigma_2}) < \filter(v) < \filter(\dth{\chi})$.
   From Proposition~\ref{prop:incidences_after_cancellation}~(ii) and \eqref{eq:uniqueness-of-blocking-1}, we see that $\bmap^{\sigma_1}(\dth{\sigma_2}, \bth{\chi}) = \bmap(\dth{\sigma_2}, \bth{\chi})$.
   Hence, $\bmap^{\sigma_1}(\dth{\sigma_2}, \bth{\chi}) \neq 0$, 
   because $\bmap(\dth{\sigma_2}, \bth{\chi}) \neq 0$.
   Thus, $\bth{\chi}$ becomes a facet of $\dth{\sigma_2}$ after canceling $\sigma_1$.
   But $\chi$ is shallow after canceling $\sigma_1$, which gives
   $\filter(\dth{\sigma_2}) > \filter(\dth{\chi})$ and yields a contradiction in case (iii).
} 

Recall the special shallow pairs that maximize birth time and minimize death time introduced in Definition~\ref{def:special-shallow-pairs}.
We end this section by introducing the corresponding two special shallow orders: one preferring late over early births and the other preferring early over late deaths.

\begin{definition}[special shallow orders]
  \label{def:special-shallow-orders}
  The \emph{birth maximizing order} of $\filter$ is the unique shallow order $\CLOalpha = (\alpha_1,\alpha_2,\ldots,\alpha_n)$ in which $\alpha_i$ is the birth maximizing pair in the $(i-1)$-st quotient complex $X^\CLOalpha_{i-1}$.
  The \emph{death minimizing order} of $\filter$ is the unique shallow order $\CLOomega =(\omega_1,\omega_2,\ldots,\omega_n)$ in which $\omega_i$ is the death minimizing pair in the $(i-1)$-st quotient complex $X^\CLOomega_{i-1}$.
\end{definition}

\section{The Depth Poset}
\label{sec:7}

This section introduces the main concept of this paper: the depth poset, which records the dependencies between the cancellations of shallow pairs.

\subsection{Definition and Basic Properties}
\label{sec:7.1}%

As shown in Example~\ref{ex:pentagon_3}, we do not need to cancel all shallow pairs with a lower gradation to make a pair shallow.
For example, the pair $(\vc, \eb)$ at depth $1$ in Figure~\ref{fig:poset-2} can be made shallow without canceling $(\va, \ea)$, which is at depth $0$ in the same hierarchy.
This information is encoded in the shallow cancellation orders.
\begin{definition}[depth poset]
  \label{dfn:depth_poset}
  Let $(X,\filter)$ be a filtered Lefschetz complex. 
  The \emph{depth poset} of $\filter$, denoted $\Depth{\filter}$, is the partial order on $\BD{\filter}$ that is the intersecting of all shallow orders of $\filter$ interpreted as linear orders; that is: relations on $\BD{\filter}$.
\end{definition}
We write $\chi \leqr{\filter} \xi$ to mean that $(\chi,\xi)$ is a relation in $\Depth{\filter}$, and $\chi \ltr{\filter} \xi$ if $\chi \leqr{\filter} \xi$ and $\chi \neq \xi$.
In this notation, we have $\chi \leqr{\filter} \xi$ if and only if $\chi \leqr{\CLOphi} \xi$ for every shallow order $\CLOphi$ of $\filter$.
We continue with some basic properties of depth posets.
\begin{proposition}[nesting]
  \label{prop:nesting}
  Let $\filter \colon X \to \Rspace$ be a filter on a Lefschetz complex, and $\chi, \xi \in \BD{\filter}$.
  Then $\chi \ltr{\filter} \xi$ implies $\filter(\Birth{\xi}{}) < \filter(\Birth{\chi}{}) < \filter(\Death{\chi}{}) < \filter(\Death{\xi}{})$.
\end{proposition}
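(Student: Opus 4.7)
The plan is to derive the three inequalities separately, with the outer two supplied by the two distinguished shallow orders $\Alpha$ and $\Omega$ constructed at the end of Section~\ref{sec:3.3}, and the middle one read off directly from the persistence construction of $\birth$.

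By Definition~\ref{dfn:depth_poset}, the depth poset is the intersection of all shallow orders on $\BD{}{\filter}$, so every relation in $\Depth{}{\filter}$ is respected by every shallow order. In particular, $(\varphi,\psi) \in \Depth{}{\filter}$ forces $\varphi$ to precede $\psi$ in both $\Alpha$ and $\Omega$. Applying the defining property of $\Alpha$ from Equation~\eqref{eqn:alpha}, which lists the birth-giving cells in strictly decreasing filter value, gives $\filter(\Birth{\psi}{}) < \filter(\Birth{\varphi}{})$. Applying the defining property of $\Omega$ from Equation~\eqref{eqn:omega}, which lists the death-giving cells in strictly increasing filter value, gives $\filter(\Death{\varphi}{}) < \filter(\Death{\psi}{})$.

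For the middle inequality, I would note that whenever $s = \birth(t)$, the pairing construction recalled in Section~\ref{sec:3.1} places $s$ into the set $Y_a$ of yet-unpaired birth-giving cells at some filter value $a < \filter(t) = b$, so $\filter(s) < \filter(t)$. Concatenating the three inequalities yields the nested ordering. The genuine content of the proposition is therefore already contained in the verification, performed at the end of Section~\ref{sec:3.3}, that $\Alpha$ and $\Omega$ really are shallow orders; once that is granted, the present statement is almost a one-line corollary and presents no real obstacle beyond organizing these facts.
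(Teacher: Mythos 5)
Your proposal is correct and follows essentially the same route as the paper: the paper derives the proposition in the sentence immediately preceding its statement, observing that a relation in $\Depth{}{\filter}$ must be respected by the two particular shallow orders $\Alpha$ and $\Omega$, which yield the outer two inequalities, with the middle inequality being immediate from the pairing. Your additional justification of $\filter(\Birth{\varphi}{}) < \filter(\Death{\varphi}{})$ via the construction of $\birth$ is a detail the paper leaves implicit, but the argument is the same.
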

\begin{proof}
  If $(\chi, \xi) \in \Depth{\filter}$, then $\chi$ appears before $\xi$ in every shallow order, in particular in $\CLOalpha$ and $\CLOomega$; see Definition~\ref{def:special-shallow-orders}.
  This implies $\filter(\Birth{\xi}{}) < \filter(\Birth{\chi}{})$ and $\filter(\Death{\chi}{}) < \filter(\Death{\xi}{})$, respectively.
  Since $\chi$ is a birth-death pair, $\filter(\Birth{\chi}{}) < \filter(\Death{\chi}{})$ and the result follows.
\end{proof}

\subsection{Shallow Cancellations Preserve the Depth Poset}
\label{sec:7.2}%

The depth poset tells us what pairs are shallow and what cancellations are needed to make a pair shallow.
We show that canceling a shallow pair preserves the remainder of the depth poset.
\begin{lemma}[depth restriction]
  \label{lem:depth-poset-restriction}
  Let $\filter$ be a filter on a Lefschetz complex, and $\sigma$ a shallow birth-death pair of $\filter$. 
  Then the depth poset of $\filter^\sigma$ is the restriction of $\Depth{\filter}$ to $\BD{\filter^\sigma}$; that is: $\Depth{\filter^\sigma} = \Depth{\filter} \cap \left[ \BD{\filter^\sigma} \times \BD{\filter^\sigma} \right]$.
\end{lemma}
\begin{proof}
  Assume $\chi \leqr{\filter^\sigma} \xi$ for $\chi, \xi \in \BD{\filter^\sigma}$. 
  In order to prove $\chi \leqr{\filter} \xi$, we have to show that $\chi \leqr{\CLOlambda} \xi$ for every shallow order $\CLOlambda$ on $\BD{\filter}$. 
  Let $\CLOlambda = \{\lambda_1,\lambda_2,\ldots,\lambda_n\}$ be such an order and $\CLOlambda'$ the order $\CLOlambda$ with $\sigma$ moved to the front. 
  By Corollary~\ref{cor:shift}, $\CLOlambda'$ is also a shallow order.
  Note that the relative order between  $\chi$ and $\xi$ is the same in $\CLOlambda$ and $\CLOlambda'$.
  Let $\CLOlambda''$ be $\CLOlambda'$ with $\sigma$ removed from the front,
  and since $\CLOlambda'$ is a shallow order so is $\CLOlambda''$.
  Thus, $\chi \leqr{\filter^\sigma} \xi$ implies $\chi \leqr{\CLOlambda''} \xi$.
  Hence $\chi \leqr{\CLOlambda'} \xi$ and also $\chi \leqr{\CLOlambda} \xi$, because the order of $\chi$ and $\xi$ is the same in $\CLOlambda$ and $\CLOlambda'$.
  Since this is true for every shallow order $\CLOlambda$, we get $\chi \leqr{\filter} \xi$. 
  This proves that $\chi \leqr{\filter^\sigma} \xi$  implies $\chi \leqr{\filter} \xi$.

  \smallskip
  To prove the converse, assume $\chi \leqr{\filter} \xi$ and $\chi, \xi \in \BD{\filter^\sigma}$.
  Let $\CLOlambda'''$ be a shallow order in $\BD{\filter^\sigma}$. 
  Since $\sigma$ is shallow, by adding $\sigma$ in front of $\CLOlambda'''$, we obtain a shallow order $\CLOlambda$ on $\BD{\filter}$. 
  Thus, $\chi \leqr{\filter} \xi$ implies $\chi \leqr{\CLOlambda} \xi$, and since $\chi \neq \sigma \neq \xi$, also $\chi \leqr{\CLOlambda'''} \xi$.
  Hence $\chi \leqr{\filter^\sigma} \xi$ because this is true for every $\CLOlambda''' \in \BD{\filter^\sigma}$.
\end{proof}

We can cancel shallow pairs repeatedly, for example, all shallow pairs at most a certain depth.
Lemma~\ref{lem:depth-poset-restriction} implies that the depth poset is still the same for the remaining birth-death pairs. 
\begin{corollary}[depth poset of the $k$-th quotient]
  \label{cor:depth_restriction}
  Let $\filter \colon X \to \Rspace$ be a filter on a Lefschetz complex, and
  $\CLOlambda$ a shallow order on $\BD{\filter}$.
  Then the depth poset of the $k$-th quotient, $X^\Lambda_k$, is the restriction of the depth poset of $\filter$ to $\BD{\filter^\Lambda_k}$. \qed
\end{corollary}

\subsection{Depth Poset and Shallow Cancellation Orders}
\label{sec:7.3}%

A minimal node in a poset is one without predecessors.
In the depth poset, the minimal nodes are the birth-death pairs that are shallow.
More precisely, we have
\begin{theorem}[shallow iff minimal]
  \label{thm:shallow_iff_minimal}
  Let $\filter \colon X \to \Rspace$ be a filter on a Lefschetz complex.
  Then the shallow pairs of $\filter$ are the minimal pairs of its depth poset:
  \begin{align}
    \label{eq:shallow-minimal}
    \SH{\filter} &= \Min{\Depth{\filter}}.
  \end{align}
\end{theorem}
\begin{proof}
  To see that every $\tau \in \SH{\filter}$ is minimal in $\Depth{\filter}$, we recall that there is a shallow cancellation order, $\CLOlambda$, whose first pair is $\tau$, by Proposition~\ref{prop:Existence of cancelable orderings}.
  It follows that there cannot exist a relation $(\sigma, \tau) \in \Depth{\filter}$ since $\tau \ltr{\CLOlambda} \sigma$.

  \smallskip
  We argue the opposite inclusion by induction on the number of birth-death pairs, which we denote $n = \card{\BD{\filter}}$.
  For $n=1$, we have $\SH{\filter} = \BD{\filter} = \Min{\Depth{\filter}}$, because $\SH{\filter} \neq \emptyset$, by Proposition~\ref{prop:shallow_pairs_exist}.
  Thus consider $n > 1$ and suppose that \eqref{eq:shallow-minimal} holds for every filter with fewer than $n$ birth-death pairs.
  To prove that $\SH{\filter} \supseteq \Min{\Depth{\filter}}$ assume to the contrary that there exists $\xi \in \Min{\Depth{\filter}} \setminus \SH{\filter}$.
  We will prove shortly that under this assumption filter $\filter$ admits exactly one shallow pair, which we denote $\sigma \in \SH{\filter}$.
  Since there are no other shallow pairs, every shallow order $\CLOlambda$ must start with $\sigma$.
  In particular, $\sigma \ltr{\CLOlambda} \xi$, which implies $\sigma \ltr{\filter} \xi$.
  Therefore, $\xi$ is not minimal in $\Depth{\filter}$, a contradiction.

  \smallskip
  We still need to prove that $\sigma$ is the only shallow pair.
  Since $n > 1$, Proposition~\ref{prop:shallow_pairs_exist} implies that there is at least one pair in $\SH{\filter}$.
  Consider an arbitrary pair $\tau \in \SH{\filter}$, and note that $\tau \neq \xi$, because $\xi \not \in \SH{\filter}$. 
  We claim that $\tau$ unblocks $\xi$, which means that $\xi \in \SH{\filter^\tau} \setminus \SH{\filter}$.
  To prove this claim, observe that $\BD{\filter^\tau} = \BD{\filter} \setminus \{\tau\}$ by Theorem~\ref{thm:canceling_a_shallow_pair}, so $\card{\BD{\filter^\tau}} = n-1$.
  Hence, $\SH{\filter^\tau} = \Min{\Depth{\filter^\tau}}$, by inductive assumption, and $\Depth{\filter^\tau}$ is the restriction of $\Depth{\filter}$ to $\BD{\filter^\tau} = \BD{\filter} \setminus \{\tau\}$ by Corollary~\ref{cor:depth_restriction}.
  Since $\xi \in \Min{\Depth{\filter}}$ and $\xi \in \BD{\filter} \setminus \{\tau\}$, we get $\xi \in \Min{\Depth{\filter^\tau}} = \SH{\filter^\tau}$ by Proposition~\ref{prop:min_nodes_under_restriction}.
  This proves that $\tau$ unblocks $\xi$ as claimed. 
  Since $\tau \in \SH{\filter}$ is arbitrary, Lemma~\ref{lem:uniqueness-of-blocking} implies that there is only one such pair.
\end{proof}

By definition, shallow orders are linear extensions of $\Depth{\filter}$. 
The other implication is also true; that is: linear extensions of $\Depth{\filter}$ are shallow cancellation orders.
\begin{proposition}[linear extensions of depth poset]
  \label{prop:linear_extensions_of_depth_poset}
  Let $\filter \colon X \to \Rspace$ be a filter on a Lefschetz complex, and $\CLOphi$ a linear order on $\BD{\filter}$.
  Then $\CLOphi$ is a shallow order if and only if it is a linear extension of $\Depth{\filter}$.
\end{proposition}
\begin{proof}
  By definition, any shallow order is a linear extension of the depth poset.
  To prove the converse, assume $\CLOphi$ is a linear extension of $\Depth{\filter}$.
  Then the first birth-death pair of $\CLOphi$ is minimal in $\Depth{\filter}$ and therefore shallow by Theorem~\ref{thm:shallow_iff_minimal}. 
  Canceling it gives a new Lefschetz complex, $X'$, and filter $\filter' = \filter|_{X'}$.
  Let $\CLOphi'$ be $\CLOphi$ with the first pair of $\CLOphi$ removed.
  Since $\CLOphi$ is a linear extension of $\Depth{\filter}$, it follows from Lemma~\ref{lem:depth-poset-restriction}
  that $\CLOphi'$ is a linear extension of $\Depth{\filter'}$.
  Again by Theorem~\ref{thm:shallow_iff_minimal},
  the first pair of $\CLOphi'$ is a shallow pair of $\filter'$, so we can repeat and inductively prove that every pair in $\CLOphi$ is shallow after all its predecessors are canceled.
  Hence, $\CLOphi$ is a shallow order. 
\end{proof}

Given a birth-death pair, $\chi \in \BD{\filter}$, its \emph{persistence} is the absolute difference of the values at the birth and the death: $\filter(\Death{\chi}{}) - \filter(\Birth{\chi}{})$. 
In some applications, persistence quantifies the importance of the pair \cite{EdHa10}.
\begin{corollary}[order by persistence]
  \label{cor:order_by_persistence}
  Let $\filter$ be a filter on a Lefschetz complex, and $\CLOlambda = (\lambda_1, \lambda_2, \ldots, \lambda_n)$ an ordering of the birth-death pairs by persistence, i.e.\ $\filter(\Death{\lambda}{i}) - \filter(\Birth{\lambda}{i}) \leq \filter(\Death{\lambda}{j}) - \filter(\Birth{\lambda}{j})$ for all $i < j$.
   Then $\CLOlambda$ is a shallow order of $\filter$.
\end{corollary}
\begin{proof}
  By Proposition~\ref{prop:linear_extensions_of_depth_poset} it suffices  to verify that $\chi\leqr{\filter} \xi$ implies $\chi\leqr{\CLOlambda} \xi$.
  The case $\chi = \xi$ is obvious. 
  Thus, assume $\chi\ltr{\filter} \xi$.
  By Proposition~\ref{prop:nesting}, we have $\filter(\Birth{\xi}{}) < \filter(\Birth{\chi}{}) < \filter(\Death{\chi}{}) < \filter(\Death{\xi}{})$. Hence, $\filter(\Death{\chi}{}) - \filter(\Birth{\chi}{}) < \filter(\Death{\xi}{}) - \filter(\Birth{\xi}{})$, which implies $\chi\ltr{\CLOlambda} \xi$.
\end{proof}

The depth poset characterizes what cancellations are needed to make a birth-death pair shallow.
In order to make this precise, we say a set $S \subseteq \BD{\filter}$ is \emph{shallow cancelable} if an ordering of $S$ is the prefix of a shallow cancellation order.
\begin{proposition}[cancelable sets]
  \label{prop:cancelable_sets}
  Let $\filter \colon X \to \Rspace$ be a filter on a Lefschetz complex.
  Then $S \subseteq \BD{\filter}$ is shallow cancelable if and only if it is a down set of~$\Depth{\filter}$.
\end{proposition}
\begin{proof}
  Assume $S$ admits an ordering that is a prefix of a shallow order, $\CLOlambda$.
  Clearly, if $\xi \in S$ and $\chi\leqr{\filter}\xi$ then $\chi\leqr{\CLOlambda}\xi$, which implies $\chi \in S$.
  Hence,  all predecessors of $\xi$ in $\Depth{\filter}$ are necessarily in $S$.
  It follows that $S$ is a down set of $\Depth{\filter}$.

  \smallskip
  To prove the converse, assume that $S$ is a down set of $\Depth{\filter}$.
  Let $\CLOlambda_1$ be a linear extension of $\Depth{\filter}$ restricted to $S$, $\CLOlambda_2$ a linear extension of $\Depth{\filter}$ restricted to $\BD{\filter} \setminus S$, and $\CLOlambda$ the concatenation of $\CLOlambda_1$ and $\CLOlambda_2$.
  We claim that $\CLOlambda$ is a linear extension of $\Depth{\filter}$.
  To see this, assume that $\chi \leqr{\filter} \xi$.
  If $\chi$ and $\xi$ are both in $S$, or are both not in $S$,
  then $\chi \leqr{\CLOlambda} \xi$ follows immediately from the choice of $\CLOlambda_1$ and $\CLOlambda_2$.
  If one of them is in $S$ and the other is not in $S$, then $\chi \in S$ and $\xi \not \in S$, else $S$ is not a down set of $\Depth{\filter}$.
  Thus, $\chi \leqr{\CLOlambda} \xi$, which proves the claim and shows that $\CLOlambda$ is a shallow order, by Proposition~\ref{prop:linear_extensions_of_depth_poset}.
  Moreover, $\CLOlambda_1$ is an ordering of $S$ and a prefix of $\CLOlambda$, which makes $S$ shallow cancelable.
\end{proof}

Let $S$ be a shallow cancelable subset of $\BD{\filter}$, and $\CLOlambda$ a shallow order whose first $k$ pairs are an ordering of $S$.
By Lemma~\ref{lem:truncating_a_shallow_cancellation_order}, the quotient complex $\quotient{\CLOlambda}{k}$ depends only on $S$.
This motivates the following definition.

\begin{definition}[quotients by shallow cancelable subsets]
  \label{dfn:quotients_by_shallow_cancelable_subsets}
  Given a filtered Lefschetz complex $(X,\filter)$ and a shallow cancelable $S \subseteq \BD{\filter}$, we define the \emph{quotient of $X$ by $S$} as the quotient complex $X^\Lambda_k$, denoted $X^S$ with filter $f^S$ and incidence map $\kappa^S$, in which $\CLOlambda$ is a shallow order whose first $k$ pairs are an ordering of $S$.
\end{definition}

As an immediate consequence of the definition and Corollary \ref{cor:depth_restriction} we obtain the following proposition. 

\begin{proposition}[depth poset after set cancellation]
  \label{prop:depth-poset-after-set-cancellation}
  Let $S \subseteq \BD{\filter}$ be shallow cancelable. 
  Then the depth poset of $\filter^S \colon X^S \to \Rspace$ is the restriction of the depth poset of $\filter$ to $\BD{\filter} \setminus S$.
  \qed
\end{proposition}

A straightforward induction argument based on Theorem~\ref{thm:canceling_a_shallow_pair} proves the following proposition.
\begin{proposition}[birth-death pairs in shallow cancelable sets]
  \label{prop:ST-shallow-cancelable}
  Assume $S\subseteq T\subseteq\BD{\filter}$ are shallow cancelable. Then 
  \begin{itemize}
   \item[(i)]   $\BD{\filter^T}=\BD{\filter^S}\setminus(T\setminus S)$,
   \item[(ii)]  $\BD{\filter^T}\cap\SH{\filter^S}\subseteq\SH{\filter^T}$. \qed
  \end{itemize}
\end{proposition}

The following proposition is a straightforward consequence of Lemma~\ref{lem:transposition}.
\begin{proposition}[incomparability of shallow pairs]
  \label{prop:incomparable}
  Assume $S\subseteq\BD{\filter}$ and $\sigma,\tau\in\SH{\filter^S}$. Then $\sigma$ and $\tau$ are incomparable in $\Depth{\filter}$. 
  \qed
\end{proposition}

We say canceling $S$ \emph{renders $\chi\in\BD{\filter}$ shallow} if  $\chi \in \SH{\filter^S}$.
One can ask whether there is a minimal shallow cancelable set $S \subseteq \BD{\filter}$ such that canceling $S$ renders $\chi$ shallow.
The following theorem  answers this question in the affirmative.
Given $\chi \in \BD{\filter}$, we recall that $\Down{\Depth{\filter}}{\chi} = \{\xi \in \BD{\filter} \mid \xi\ltr{\filter} \chi\}$.
\begin{theorem}[becoming shallow]
\label{thm:becoming_shallow}
  Let $\filter \colon X \to \Rspace$ be a filter on a Lefschetz complex,
  $S \subseteq \BD{\filter}$ a shallow cancelable subset of the birth-death pairs, and $\chi \in \BD{\filter} \setminus S$. 
  Then canceling $S$ renders $\chi$ shallow if and only if $\Down{\Depth{\filter}}{\chi} \subseteq S$.
\end{theorem}
\begin{proof}
  By Theorem~\ref{thm:shallow_iff_minimal}, $\chi \in \SH{\filter^S}$ if and only if $\chi \in \Min{\Depth{\filter^S}}$, and by Proposition£ \ref{prop:depth-poset-after-set-cancellation}, $\Depth{\filter^S}$ is $\Depth{\filter}$ restricted to $\BD{\filter^S} = \BD{\filter} \setminus S$.
  The conclusion thus follows from Proposition~\ref{prop:min_nodes_under_restriction}.
\end{proof}
Consider the depth poset in Figure~\ref{fig:poset-2} as an illustration of the above results.
Whether or not we cancel $(\va, \ea)$ has no effect on $(\vc, \eb)$, and canceling $(\vd, \ec)$ is both necessary and sufficient to render $(\vc, \eb)$ shallow.

\smallskip
The following proposition follows from Proposition~\ref{prop:keeping-status} by a straightforward induction argument. 
\begin{proposition}[keeping status after canceling]
  \label{prop:keeping-status-set}
  Let $S \subseteq T \subseteq \BD{\filter}$ be shallow cancelable sets, and $\chi \in \BD{\filter} \setminus T$.
  If $\chi = \alphaPair{\filter^S}$, then $\chi = \alphaPair{\filter^T}$ and the coboundary of $\bth{\chi}$ is the same in $X^S$ and $X^T$.
  If $\chi = \omegaPair{\filter^S}$, then $\chi = \omegaPair{\filter^T}$ and the boundary of $\dth{\chi}$ is the same in $X^S$ and $X^T$.
  \qed
\end{proposition}

\section{Algorithm}
\label{sec:8}

Definition~\ref{dfn:depth_poset} is intuitive and convenient for studying the properties of depth poset, but not helpful in computations as it requires finding all shallow orders. 
We need a different approach to efficiently compute the depth poset.

\subsection{Two Special Relations}
\label{sec:8.1}%

As we will see shortly, only two special shallow orders suffice to compute depth poset: the birth maximizing and death minimizing orders, $\CLOalpha$ and $\CLOomega$; see Definition~\ref{def:special-shallow-orders}.
To explain the algorithm, we need some auxiliary relations, which we now introduce.
Letting $(X,\filter)$ be a filtered Lefschetz complex, the \emph{death} and \emph{birth relations} on $\BD{\filter}$ are
\begin{align}
  \label{eq:birth-relation}
  \Death{R}{\filter} &:=\{(\varphi,\psi) \mid \psi \not \in \Down{\CLOalpha_\filter}{\varphi} \text{ and } \bmap^{\Down{\CLOalpha_\filter}{\varphi}}(\dth{\psi},\bth{\varphi})\neq 0 \},\\
  \label{eq:death-relation}
  \Birth{R}{\filter} &:= \{(\varphi,\psi) \mid \psi \not \in \Down{\CLOomega_\filter}{\varphi} \text{ and } \bmap^{\Down{\CLOomega_\filter}{\varphi}}(\dth{\varphi},\bth{\psi})\neq 0 \}.
\end{align}
At first, this may seem counterintuitive that the birth maximizing order $\CLOalpha_\filter$ defines the death relation and the 
    death minimizing order $\CLOomega_\filter$ defines what we call the birth relation.
Intuitively,
    the death relation encodes 
        conflicts of the death cells with other pairs and a hierarchy how it can get be unlocked, 
        the birth maximizing order is auxiliary, 
        allowing to process the reduction in a controlled way.
The same holds for the birth relation.

\begin{theorem}[depth poset via birth and death relations]
\label{thm:depth-birth-death}
  Given a filtered Lefschetz complex, $(X, \filter)$, its depth poset is the transitive closure of the union of the death and birth 
  relations; that is:
  \begin{equation}
  \label{eq:depth-birth-death}
    \closure(\Death{R}{\filter}\cup\Birth{R}{\filter})=\Depth{\filter}.
  \end{equation}
\end{theorem}
\begin{proof}
  The transitivity of $\Depth{\filter}$ is immediate from Definition~\ref{dfn:depth_poset}.
  To prove that the left-hand-side of \eqref{eq:depth-birth-death} is contained in the right-hand-side, it therefore suffices to prove $\Death{R}{\filter} \subseteq \Depth{\filter}$ and $\Birth{R}{\filter} \subseteq \Depth{\filter}$.
  To this end, consider $(\chi,\xi) \in \Death{R}{\filter}$.
  Let $A = \Down{\CLOalpha_\filter}{\chi}$.
  Then $\xi \not \in A$ and $\bmap^A(\dth{\xi}, \bth{\chi}) \neq 0$.
  We want to prove that $(\chi, \xi) \in \Depth{\filter}$. 
  Assume to the contrary that this is not the case. 
  Then, by the definition of the depth poset, there is a shallow order $\CLOlambda$ such that $\xi \ltr{\CLOlambda} \chi$.
  Set $L = \Down{\CLOlambda}{\xi}$ and $C = A \cup L$.
  We have $\xi \not \in A$ and since $\xi \ltr{\CLOlambda} \chi$, we also have $\chi \not \in L$.
  By the definition of $A$ and $L$, we have $\chi \not \in A$ and $\xi \not \in L$. Hence $\chi, \xi \not \in C$.
  By Proposition~\ref{prop:down_sets}~(ii) $C$ is a down set as a union of down sets.
  Thus, $C$ is shallow cancelable by Proposition~\ref{prop:cancelable_sets} and we get from Proposition~\ref{prop:keeping-status-set} that $\alphaPair{\filter^C} = \alphaPair{\filter^A} = \chi$ and $\bmap^C(\dth{\xi}, \bth{\chi}) = \bmap^{A}(\dth{\xi}, \bth{\chi}) \neq 0$.
  Since $\chi, \xi \not \in C$, we get from Proposition~\ref{prop:ST-shallow-cancelable}(i) that $\chi, \xi \in \BD{\filter^C}$.
  It follows from Theorem~\ref{thm:becoming_shallow} that $\chi \in \SH{\filter^A}$ and $\xi \in \SH{\filter^L}$.
  Hence, we obtain from Proposition~\ref{prop:ST-shallow-cancelable}(ii) that $\chi, \xi \in \SH{\filter^C}$.
  But we know that $\chi = \alphaPair{\filter^C}$; thus, $\filter(\bth{\chi}) > \filter(\bth{\xi})$.
  However, since $\xi \in \SH{\filter^C}$ and $\bmap^C(\dth{\xi}, \bth{\chi}) \neq 0$, we also get $\filter(\bth{\chi}) < \filter(\bth{\xi})$, a contradiction.
  This proves that $(\chi, \xi) \in \Depth{\filter}$ and $\Death{R}{\filter} \subseteq \Depth{\filter}$.
  Analogously we verify that $\Birth{R}{\filter} \subseteq \Depth{\filter}$.
  This completes the proof that the left-hand-side of \eqref{eq:depth-birth-death} is contained in the right-hand-side.

  \smallskip
  To prove the opposite inclusion consider $(\chi,\xi)\in\Depth{\filter}$.
  Since the left-hand-side of \eqref{eq:depth-birth-death} is transitive, by Proposition~\ref{prop:transitivity}, 
  without loss of generality we may make a stronger assumption that $\chi\ltrdot{\filter}\xi$, that is $\chi$ is covered by $\xi$ in  $\Depth{\filter}$.
  We will show that under this assumption $(\chi,\xi)\in \Death{R}{\filter}\cup\Birth{R}{\filter}$.
  Set $A=\Down{\CLOalpha_\filter}{\chi}$ and $O=\Down{\CLOomega_\filter}{\chi}$.
  It follows from Definition~\ref{def:special-shallow-orders} that $\chi=\alphaPair{\filter^A}=\omegaPair{\filter^O}$.
  Clearly, $\chi\not\in A$ and $\chi\not\in O$. Hence, $\chi\not\in C:=A\cup O$.  
  Proposition~\ref{prop:down_sets}~(ii) implies that $C$, as a union of down sets, is a down set.
  Hence, $C$ is shallow cancelable by Proposition~\ref{prop:cancelable_sets}.
  We get from Proposition~\ref{prop:keeping-status-set} that  $\alphaPair{\filter^C}=\alphaPair{\filter^A}=\chi$ and $\omegaPair{\filter^C}=\omegaPair{\filter^O}=\chi$.
  Note that $\chi\ltr{\filter}\xi$ implies $\chi\ltr{\CLOalpha}\xi$ and $\chi\ltr{\CLOomega}\xi$ . 
  Hence, $\xi\not\in A$ and $\xi\not\in O$.
  Therefore, $\xi\not\in C$.
  Observe that $\chi\ltrdot{\filter}\xi$ implies that $\chi \in \Down{\Depth{\filter}}{\xi}$.
  Actually, we can prove that $\Down{\Depth{\filter}}{\xi} \setminus \{\chi\}$ is also a down set.
  Assume $\sigma \in \Down{\Depth{\filter}}{\xi}\setminus \{\chi\}$ and $\tau \ltr{f} \sigma$.
  Then, $\tau \in \Down{\Depth{\filter}}{\xi}$ and the condition $\chi\ltrdot{\filter}\xi$ ensures $\tau \neq \chi$.
  We define $W \coloneqq C \cup \Down{\Depth{\filter}}{\xi} \setminus \{\chi\}$ and $W' \coloneqq W \cup \{\chi\} = C \cup \Down{\Depth{\filter}}{\xi}$.
  Note that both are down sets by Proposition~\ref{prop:down_sets}~(ii).
  In addition, Lemma~\ref{lem:depth-poset-restriction} and Theorem~\ref{thm:becoming_shallow} imply that $\xi$ becomes minimal in the depth poset resulting from canceling $W'$.
  Thus, as a consequence of Theorem~\ref{thm:shallow_iff_minimal} we get $\xi\in\SH{\filter^{W'}}$. 
  To see that $(\chi,\xi)\in \Death{R}{\filter}\cup\Birth{R}{\filter}$ assume to the contrary that this is not the case. 
  Then $\bmap^A(\dth{\xi},\bth{\chi})= 0$ and $\bmap^O(\dth{\chi},\bth{\xi})= 0$.
  Thus, from Proposition~\ref{prop:keeping-status-set} we get  $\bmap^W(\dth{\xi},\bth{\chi})= 0$ and $\bmap^W(\dth{\chi},\bth{\xi})= 0$ and Proposition~\ref{prop:incidences_after_cancellation} implies that the boundary of $\dth{\xi}$ and the coboundary of $\bth{\xi}$ do not change after canceling $\chi$.
  Since $\xi\in\SH{\filter^{W'}}$, it follows that $\xi\in\SH{\filter^{W}}$.
  Since $\chi=\alphaPair{\filter^{W}}$, we also have $\chi\in\SH{\filter^{W}}$.
  Thus, by Proposition~\ref{prop:incomparable} shallow pairs $\chi$ and $\xi$ are incomparable in $\Depth{\filter}$, which contradicts $\chi\leqr{\filter}\xi$.
  This completes the proof of the opposite inclusion. 
\end{proof}
 
Define the dimension of a birth-death pair $\chi = (\bth{\chi},\dth{\chi})$ as the dimension of $\bth{\chi}$.
The following corollary shows that the depth poset splits by dimension.
\begin{corollary}[dimension split]
  \label{cor:dimension_split}
  If $\chi, \xi \in \BD{\filter}$ differ in dimension then they are incomparable in $\Depth{\filter}$.
\end{corollary}
\begin{proof}
  By definition of the two special relations, we have
  $(\chi, \xi) \in \Death{R}{\filter} \cup \Birth{R}{\filter}$ only if the dimension of the birth-giving cells is one less than that of the death-giving cells in the two pairs.
  Hence, the two pairs have the same dimension.
  The same is true for the two pairs in any relation of the transitive closure.
  Theorem~\ref{thm:depth-birth-death} thus implies that $\chi, \xi$ are comparable in $\Depth{\filter}$ only if they share the dimension.
\end{proof}

\subsection{Two Matrix Reduction Algorithms}
\label{sec:8.2}%

By Theorem~\ref{thm:depth-birth-death}, to construct the depth poset it suffices to compute the relations $\Death{R}{\filter}$ and $\Birth{R}{\filter}$.
We will achieve this using two matrix reduction algorithms.
Algorithm~\ref{alg:bottom_to_top_column_reduction} uses column operations combined with cancellations, which it finds by visiting the rows from bottom to top.
Symmetrically, Algorithm~\ref{alg:left_to_right_row_reduction} uses row operations combined with cancellations, which it finds by visiting the columns from left to right.
Variants of Algorithms~\ref{alg:bottom_to_top_column_reduction} and~\ref{alg:left_to_right_row_reduction} 
have appeared recently in \cite{NiMo24}, where they are used for the related but different aim of topology optimization.

\smallskip
Algorithm~\ref{alg:bottom_to_top_column_reduction} visits shallow pairs according to the birth maximizing order $\CLOalpha_\filter$.
After completing the column operations for a given pivot, it removes the corresponding row and column. 
This step is related to the clearing operation on columns proposed in~\cite{BKR14} to speed up persistence algorithm.
In parallel to the reductions, we collect the data needed to determine the death relation.
\begin{algorithm}[hbt]
  \caption{(death relation via bottom to top column reduction)}
  \label{alg:bottom_to_top_column_reduction}
  \begin{algorithmic}[1]
  \Require matrix $\Bd{}$ of the boundary operator in a filtered Lefschetz complex $(X,\filter)$ 
  \Ensure $\dth{R}$ stores the death relation
  \State{$\matrixA = \Bd{}$; $\dth{R} = \emptyset$; $\BDL = \emptyset$; $L=\emptyset$}
  \While{$\matrixA \neq 0$} 
    \State{$s=\max\{s'\mid \matrixA[s',\cdot]\neq 0\}$} \Comment{lowest non-zero row }
    \State{$t=\min\{t'\mid \matrixA[s,t']\neq 0\}$}\Comment{first non-zero entry}
    \State{append $(s,t)$ to \BDL}          \Comment{collect birth-death pairs}
    \For{$y>t$ such that $\matrixA[s,y] \neq 0$}
            \State{$\matrixA[\cdot,y]=\matrixA[\cdot,y]-\matrixA[s,t]^{-1}\matrixA[s,y]\matrixA[\cdot,t]$} \Comment{column reduction}
            \State{append $(t,y)$ to $L$}           \Comment{collect death relation candidates}
    \EndFor        
    \State{delete rows $s$ and $t$ and columns $s$ and $t$ from $\matrixA$}  \Comment{cancellation}
  \EndWhile
  \For{\textbf{each} $(s,t),(s',t')$ in $\BDL$}
    \State{if $(t,t')$ in $L$ append $((s,t),(s',t'))$ to $\dth{R}$} \Comment{collect death relation}
  \EndFor
  \end{algorithmic}
\end{algorithm}

\smallskip
Symmetrically, Algorithm~\ref{alg:left_to_right_row_reduction} visits shallow pairs in the order of  $\CLOomega_\filter$ while reducing the boundary matrix with row operations.
It is based on the cohomology algorithm in \cite{dSMV11}, but we use row operations in the boundary matrix instead of column operations in the coboundary matrix.
\begin{algorithm}[hbt]
  \caption{(birth relation via left to right row reduction)}
  \label{alg:left_to_right_row_reduction}
  \begin{algorithmic}[1]
  \Require matrix $\Bd{}$ of the boundary operator in a filtered Lefschetz complex $(X,\filter)$ 
  \Ensure $\bth{R}$ stores the death relation
  \State{$\matrixA = \Bd{}$; $\bth{R} = \emptyset$; $\BDL = \emptyset$; $L=\emptyset$}
  \While {$\matrixA \neq 0$} 
    \State{$t=\min\{t'\mid \matrixA[\cdot,t']\neq 0\}$} \Comment{first non-zero column}
    \State{$s=\max\{s'\mid \matrixA[s',t]\neq 0\}$}\Comment{lowest non-zero entry}
    \State{append $(s,t)$ to \BDL}          \Comment{collect birth-death pairs}  
    \For{$x<s$ such that $\matrixA[x,t]\neq 0$}
      \State{$\matrixA[x,\cdot]=\matrixA[x,\cdot]-\matrixA[s,t]^{-1}\matrixA[x,t]\matrixA[s,\cdot]$}\Comment{row reduction}
      \State{append $(s,x)$ to $L$}     \Comment{collect birth relation candidates}
    \EndFor        
    \State{delete rows $s$ and $t$ and columns $s$ and $t$ from $\matrixA$}  \Comment{cancellation}
  \EndWhile
  \For{\textbf{each} $(s,t),(s',t')$ in $\BDL$}
    \State{if $(s,s')$ in $L$ append $((s,t),(s',t'))$ to $\bth{R}$} \Comment{collect birth relation}
  \EndFor
  \end{algorithmic}
\end{algorithm}

\smallskip
The following result guarantees the correctness of  Algorithms~\ref{alg:bottom_to_top_column_reduction} 
and \ref{alg:left_to_right_row_reduction}.
\begin{proposition}[computation of birth and death relations]
  \label{prop:birth-death-relations-algorithms}
  Given the boundary homomorphism matrix $\Bd{}$ of a filtered Lefschetz complex $(X,\filter)$ 
  Algorithm~\ref{alg:bottom_to_top_column_reduction} computes the death relation $\dth{R}$ and 
  Algorithm~\ref{alg:left_to_right_row_reduction} computes the birth relation $\bth{R}$.
\end{proposition}
\begin{proof}
  We prove that Algorithm~\ref{alg:bottom_to_top_column_reduction} computes the death relation. 
  The analysis of  Algorithm~\ref{alg:left_to_right_row_reduction} is analogous. 

  \smallskip
  Denote by $n$ the cardinality of $\BD{\filter}$.
  Recall that  $(X^\CLOalpha_i,\filter^\CLOalpha_i)$ for $i=1,2,\ldots,n$ denotes the $i$th Lefschetz complex associated with
  the birth order $\CLOalpha$.
  Observe that the initial state of variable $\matrixA$, which we denote $\matrixA_0$, 
  is set in the first line as the matrix of the boundary homomorphism $\Bd{}$ of $(X,\filter)$.
  For $i>0$ denote by $\matrixA_i$ the state of matrix $\matrixA$ after completing the $i$th pass of the \textbf{while} loop.
  Also denote by $s_i$, $t_i$  the values assigned to variables $s$ and $t$ on the $i$th pass of the \textbf{while} loop.
  Let $\alpha_i=(s_i,t_i)$. We will prove  that $(\alpha_1,\alpha_2,\ldots,\alpha_n)$ is the birth maximizing order $\CLOalpha$ of $(X,\filter)$.
  To this end we need to verify  that $\alpha_i$ is the birth maximizing pair in $(X^\CLOalpha_{i-1},\filter_{i-1})$,
  matrix $\matrixA_{i-1}$ is the matrix of the boundary homomorphism in $(X^\CLOalpha_{i-1},\filter_{i-1})$
  for $i=1,2,\ldots,n$ and $\matrixA_n=0$. We will do so by induction in $i$. Since $s_1$ is the lowest non-zero row 
  and $t_1$ is the index of the first non-zero entry in this row, it is clear that $\alpha_1=(s_1,t_1)$ is the birth maximizing 
  shallow pair in $\matrixA_0=\Bd{}$. It follows that the claim is obvious for $i=1$. Thus, fix an $i>1$ and
  assume the claim holds for indexes less than $i$. 
  It follows that $\matrixA_{i-1}$ 
  is the matrix of the boundary homomorphism in $(X^\CLOalpha_{i-1},\filter^\CLOalpha_{i-1})$.
  Again, it is obvious from the choice of $s_i$ and $t_i$ that $\alpha_i$ is the birth maximizing 
  shallow pair in $\matrixA_{i-1}$. 
  We claim that matrix $\matrixA_i$  satisfies the requirements of Definition~\ref{dfn:cancellation} 
  for matrix of the boundary homomorphism 
  in the quotient of $(X^\CLOalpha_{i-1},\filter^\CLOalpha_{i-1})$ by $\alpha_i$.
  Inspecting the \textbf{for} loop of the algorithm we see that entry $\matrixA[x,y]$ is replaced by
  $\matrixA[x,y]-\matrixA[s,t]^{-1}\matrixA[x,t]\matrixA[s,y]$ for every row index $x$ and every column index $y$ of $\matrixA$
  such that $\matrixA[s,y]\neq 0$. The only other modification of $\matrixA$ inside the \textbf{while} loop 
  is removing columns and rows indexed by $t$ and $s$. 
  Thus, using~\eqref{eqn:incidenceupdate} and remembering that  $\matrixA[x,y]$ is the matrix notation for the  incidence map $\bmap(y,x)$,
  it is straightforward to verify that $\matrixA_i$  fulfills our claim.
  It follows from Proposition~\ref{prop:shallow_pairs_exist} and Theorem~\ref{thm:canceling_a_shallow_pair}
  that the \textbf{while} loop is passed exactly $n$ times.
  This proves that the sequence $(\alpha_1,\alpha_2,\ldots,\alpha_n)$ where $\alpha_i=(s_i,t_i)$ 
  is indeed the birth sequence $\CLOalpha$ of $(X,\filter)$ and $\matrixA_n=0$.

  \smallskip
  We can finally prove that Algorithm~\ref{alg:bottom_to_top_column_reduction} computes the death relation $\dth{R}$.
  Observe that immediately after completing the \textbf{while} loop list $L$ contains all the pairs of the form $(\alpha_i,t)$
  such that $\matrixA_{i-1}[\alpha_i,t]=\bmap^{\Down{\CLOalpha_\filter}{\alpha_i}}(t,\alpha_i)\neq 0$.
  The use of list $L$ ensures that $\dth{R}$ stores the death relation after completing the \textbf{for each} loop. 
\end{proof}

\subsection{Deriving the Depth Poset}
\label{sec:8.3}

The final algorithm for depth poset is straightforward. It is presented as Algorithm~\ref{alg:depth-poset}.

\begin{algorithm}[hbt]
  \caption{(depth poset)}
  \label{alg:depth-poset}
  \begin{algorithmic}[1]
  \Require matrix $\Bd{}$ of the boundary operator in a filtered Lefschetz complex $(X,\filter)$ 
  \Ensure $\Depth{\filter}$ stores the Depth poset
  \State{Compute the death relation $\Death{R}{\filter}$ from $\Bd{}$ using Algorithm~\ref{alg:bottom_to_top_column_reduction}}
  \State{Compute the birth relation $\Birth{R}{\filter}$ from $\Bd{}$ using Algorithm~\ref{alg:left_to_right_row_reduction}}
  \State{Compute $\Depth{\filter}$ as the transitive closure of $\Death{R}{\filter}\cup \Birth{R}{\filter}$}
  \end{algorithmic}
\end{algorithm}

\begin{theorem}[correctness of algorithm]
  \label{thm:correctness_of_algorithm}
  Given the boundary matrix, $\Bd{}$, of a filtered Lefschetz complex $(X, \filter)$ Algorithm~\ref{alg:depth-poset} computes its depth poset $\Depth{\filter}$.
\end{theorem}
\begin{proof}
  By Proposition~\ref{prop:birth-death-relations-algorithms}, Algorithm~\ref{alg:bottom_to_top_column_reduction}
  computes the death relation, $\Death{R}{\filter}$, and Algorithm~\ref{alg:left_to_right_row_reduction}
  computes the birth relation, $\Birth{R}{\filter}$.
  The claim follows from Theorem~\ref{thm:depth-birth-death}.
\end{proof}

\section{Conclusion and Outlook}
\label{sec:9}

Starting with the observation that the concept of a filter on a Lefschetz complex in persistent homology may be interpreted as a discrete Morse function inducing a combinatorial gradient on the complex, we use the resulting dynamics to construct the depth poset of dependencies between homology preserving cancellations of birth-death pairs.
Although Lefschetz complexes are primarily algebraic, there is a sufficient residual of topological information to establish discrete Morse functions and study the associated gradient dynamics.
Cancellations in a Lefschetz complex do not preserve the homotopy type, but they do preserve the homology, which recommends them for the study of homological features, including persistent homology.
The depth poset encodes the hierarchy of cancellations in the complex and the induced dynamics, so it provides a convenient tool in the study of possible simplifications, which is an important topic in shape optimization. 
The depth poset may be superimposed on persistence diagram providing a richer invariant.

\smallskip
There are several directions in which the work on depth posets may be continued. We briefly discuss these direction in the following sections. 

\subsection{Cancellation of Non-shallow Birth-death Pairs}
\label{sec:9.1}

By Theorem~\ref{thm:canceling_a_shallow_pair}, the cancellation of a shallow pair does not change the set of birth-death pairs other than in the obvious way. 
This is not guaranteed if we cancel a vector that is a non-shallow birth-death pair, as the following example shows. 
\begin{example}[pentagon 6: cancellation of non-shallow birth-death pairs]
  \label{ex:pentagon_6}
  Consider the pentagon in the leftmost panel of Figure~\ref{fig:penta} and assume a filter that induces the following ordered sequence of vertices and edges: $\va,\vd,\ve,\vc,\vb,\ea,\ed,\eb,\ec,\ee$. 
  Straightforward computations shows that the birth-death pairs under this filter are $(\vb,\ea)$,$(\ve,\ed)$,$(\vc,\eb)$,($\vd,\ec)$,
  the first two of which are shallow. 
  After canceling $(\vb,\ea)$, we have a quadrangle and the remaining three birth-death pairs: $(\ve,\ed)$,$(\vc,\eb)$,$(\vd,\ec)$.
  The pair $(\vd,\ec)$ is not shallow but may be canceled, because $\vd$ is still a facet of $\ec$. 
  However, after canceling $(\vd,\ec)$, we arrive at a triangle whose filter induces the ordering: $\va, \ve, \vc, \ed, \eb, \ee$.
  The corresponding birth-death pairs are $(\vc,\ed)$,$(\ve,\eb)$ rather than $(\ve,\ed)$,$(\vc,\eb)$.
  We see that the second cancellation indeed did not preserve the birth-death pairs of the filter.
  \exend  
\end{example}
The authors of \cite{LFLM26} have already identified certain sufficient conditions based on the death and the birth relations allowing to determine whether a non-shallow birth-death pair can be cancelled without affecting the remaining pairs.
It is however not yet determined whether the presented condition exhaust all such cancellations.

\subsection{Depth Posets and Perfect Morse Functions}
\label{sec:9.2}

It is not difficult to observe that a Lefschetz complex admits a perfect Morse function if and only if it admits a filter in which all birth-death pairs are shallow, so they all have depth zero.
Thus, the non-existence of a perfect Morse function on a Lefschetz complex $X$ is equivalent to the depth poset of every filter of $X$ having nodes of depth greater than zero.
The following example is interesting in this context.
\begin{figure}[ht]
    \centering
  \resizebox{!}{1.9in}{\input{dunce-2.pdf_t}} \hspace{0.2in}
  \resizebox{!}{1.9in}{\input{dunce-3.pdf_t}}
    \caption{\small \emph{Left:} a regular CW decomposition of the Dunce hat drawn as a triangle with glued edges,
    and the combinatorial gradient of shallow pairs of the filter specified in Example~\ref{ex:dunce-hat}. 
    There are seven vectors, $(\vb,\ea), (\vc,\ec), (\ed,\ea\ec\ed), (\ee,\eb\ed\ee), (\ef,\eb\ee\ef), (\eg,\ea\ef\eg), (\eh,\ea\eg\eh)$---each marked by a red arrow---and three critical cells, $\va$, $\eb$, $\eb\ec\eh$---each marked by a red circle.
    \emph{Right:} three heteroclinic connections (paths) from $\eb\ec\eh$ to the edge $\eb$ with implicit arrows
    on the paths marked in green.
    }
    \label{fig:dunce-hat}
\end{figure} 
\begin{example}[Dunce hat]
  \label{ex:dunce-hat}
  Consider the CW complex of the Dunce hat $X$, which consists of 
  three $0$-cells, $\va, \vb, \vc$, 
  eight $1$-cells, $\ea, \eb, \ec, \ed, \ee, \ef, \eg, \eh$, and 
  six $2$-cells, $\ea\ec\ed, \eb\ed\ee, \eb\ee\ef, \ea\ef\eg, \ea\eg\eh, \eb\ec\eh$; see the left panel of Figure~\ref{fig:dunce-hat},
  and a filter that orders the cells as $\va, \vb, \vc, \ea, \eb, \ec, \ed, \ee, \ef, \eg, \eh, \ea\ec\ed, \eb\ed\ee, \eb\ee\ef, \ea\ef\eg, \ea\eg\eh, \eb\ec\eh$.
  The left panel of Figure~\ref{fig:dunce-hat} shows the vectors of the combinatorial gradient induced by the filter. 
  There are three critical cells: the triangle $\eb\ec\eh$, the edge $\ea$, and the vertex $\vb$.
  The depth poset of this filter (not shown) has has nodes of depth zero and one. Since the Dunce hat does not admit a perfect Morse function (see~\cite{AFV12,Be12} and \cite[Section 10.2.1]{DeWa22}), there is no filter whose depth poset is without nodes of depth one.
  \exend  
\end{example}
Example~\ref{ex:dunce-hat} leads to an interesting question.
Define the \emph{depth} of a Lefschetz complex as the minimum, over all its filters, of the maximum depth, over all nodes of the depth poset of the filter.
It is not difficult to give examples of Lefschetz complexes of  depth zero; for instance Lefschetz complexes of a  triangulation of a Euclidean ball or sphere.
The Lefschetz complex discussed in Example~\ref{ex:dunce-hat} has depth one.
Are there Lefschetz complexes with depth greater than  one?
Is there an effective way to compute the depth of a given Lefschetz complex? 
Some results in this direction will be presented in~\cite{FMS26}.

\subsection{Cancellations that Preserve Space or Homotopy Type}
\label{sec:9.3}
The construction of the depth poset is based on cancellations of shallow pairs, which sometimes implies a change of the underlying space.
In applications, this is often an undesirable feature.
For instance, in topology optimization the space ought to remain fixed while the filter (a Morse function) is modified. 
Nevertheless, in a recent approach to the topic~\cite{NiMo24}, algorithms analogous to 
Algorithms~\ref{alg:bottom_to_top_column_reduction} and \ref{alg:left_to_right_row_reduction} are used. 
It is therefore of vital importance to investigate the relationship between the two approaches. 

\smallskip
Another space preserving approach to simplification is based on path reversals; see e.g.\ \cite{DRS15}, where they are used to skeletonize and partition digital images.
Combined with persistence, path reversals are applied when the surrounding of the path is sufficiently flat so a small perturbation can reverse the flow.
As pointed out in~\cite{DRS15}, a limitation of this method is the requirement that the paths be unique; see \cite[Thm.\ 11.1]{For98}.
However, in a filter induced combinatorial gradient, there can be several paths (heteroclinic connections) between two cells; see e.g.\ the three paths from $\eb \ec \eh$ to $\eb$ in the right panel of Figure~\ref{fig:dunce-hat}.
Inverting  one or two of them leads to a combinatorial vector field that induces a semi-flow with a chaotic invariant set on the Dunce hat, as implied by a combination of \cite[Thm.\ 2.3]{MrWa21} and \cite[Cor.\ 2.2]{MrSrWa22}.
Invariant sets that may be perturbed to complicated dynamics contribute to the limitation of path reversals.
However, instead of reversing the flow, we can treat invariant sets as insignificant because small perturbations may turn them into collections of stationary points. 
This suggests we group all heteroclinic connections into one Morse set---as in the Conley theory of combinatorial vector fields---and use the more general Conley complex in lieu of the Morse complex; see~\cite{MrWa25}.

\smallskip
Less restrictive than preserving the space would be to preserve the homotopy type of the cellular complex on which we apply the cancellations.
Here we note that cancellations can be applied to a filtered Lefschetz complex until only critical cells remain, and this is possible even if we limit ourselves to canceling shallow pairs.
If we keep track of a cellular complex that realizes this Lefschetz complex, some of these cancellations my alter the homotopy type.
It would be interesting to understand where the two approaches diverge.
Is there a common obstacle regardless of the chosen shallow cancellation order?
These questions may be related to the concept of core (see~\cite{BaMi12,Fe26}).

\subsection{Transpositions in Filter}
\label{sec:9.4}

Another fundamental question is the sensitivity of the depth posed to a local change of the filter.
This relates to a classic topic in continuous dynamics: the study of the changes in a parameterized flow. 
In the combinatorial setting, there is a notion of an elementary change to the filter, namely the transposition of two contiguous cells; see \cite{CEM06} and \cite{ELMSZ25}, where the impact of a transposition on the persistence diagram and depth poset is studied, respectively.
The latter may be viewed as a first step toward a combinatorial Cerf theory \cite{Cer68}.
We suggests that there is even more to be discovered via the topological invariants associated with the filter induced combinatorial gradient.

\begin{figure}[ht]
    \centering
  \resizebox{!}{1.2in}{\input{stream-south.pdf_t}} \hspace{0.25in}
  \resizebox{!}{1.2in}{\input{stream-north.pdf_t}}
    \caption{\small The stream capture phenomenon visible in the combinatorial gradients of two filters that differ by the transposition of the cells $\vd\vf$ and $\va\vd$. 
    Specifically, the gradient path that originates from cell $\vd\ve$ and terminates in cell $\vc$ in the \emph{left panel} is diverted to the cell $\va$ in the \emph{right panel}. 
    Implicit arrows along the paths are marked in \emph{green}. 
    Note that this global change is not visible in the depth diagram.  
    }
    \label{fig:stream-capture}
\end{figure} 
\begin{example}[Stream capture]
  \label{ex:stream-capture}
  We introduce a combinatorial analogue of a phenomenon known in geomorphology as `stream capture' or `stream piracy'. 
  Consider the combinatorial gradient presented in the left panel of Figure~\ref{fig:stream-capture}. 
  Suppose a filter that induces the following ordering of the cells:
  \begin{align}
    \va, \vb, \va\vb, \ve, \vg, \ve\vg, \vc, \vf, \vc\vf, \vb\ve, \vf\vg, \va\vc, \va\vf, \vd, \vd\vf, \va\vd, \va\vd\vf, \vd\ve, \vd\vg, \vd\vf\vg, \vb\vd,  \nonumber \\
     \va\vb\vd, \vb\vd\ve, \vd\ve\vg, \va\vc\vf . \nonumber
  \end{align}
  The combinatorial gradient in the right panel of Figure~\ref{fig:stream-capture}
  is induced by a filter that differs from this one by a single transposition of the cells $\vd\vf$ and $\va\vd$. 
  The persistence diagrams of the two filters are identical.
  Similarly, the structure of the two depth posets is the same, except that the nodes $(\vd,\vd\vf), (\va\vd,\va\vd\vf)$ are replaced by $(\vd,\va\vd),(\vd\vf,\va\vd\vf)$ for the second filter. 
  In contrast, there is substantial impact to the flow.
  Specifically, the gradient path from $\vd\ve$ to $\vc$ in the left panel changes
  to the gradient path from $\vd\ve$ to $\va$ in the right panel.
  While this change is neither signaled by the persistence diagram nor the structure of the depth poset, it is forecast by the shape change of connections between components.
  The combinatorial connection matrix theory~\cite{MrWa25}, 
  originally developed as a tool in the study of heteroclinic connections in differential equations, can be used to monitor changes in gradient paths. 
  In the context of combinatorial Cerf theory, it would be interesting to investigate the relations between depth posets and combinatorial connection matrices. 
  \exend  
\end{example}

\subsection{Non-injective Filters}
\label{sec:9.5}

Throughout this paper, we assume an injective filter, in particular when we constructed its depth poset.
Since a small perturbation of a not necessarily injective filter can make it injective, the assumption is generic.
The main reason for this simplifying assumption is convenience, and it is justified if we are free to assign values to the cells.

\smallskip
The situation is different when the filter is induced by the data, and there are challenges in how to extend the definition in a meaningful way.
One example is the radius function on the Delaunay mosaic of points in Euclidean space; see e.g.\ \cite[Section~III.3]{EdHa10}.
Even when the points are in generic position, we observe intervals  consisting of all simplices that are faces of a maximum and cofaces of a minimum. 
Such intervals may be viewed as generalized vectors~\cite{Fre09}.
Another example is in image processing, where we interpret the gray value of a pixel as its function value.
Such a function can be extended to a non-injective filter on the  induced cellular complex 
by taking the minimum gray value of the pixels in the star of a cell; see e.g.\ \cite{DRS15}.
In this setting, the intervals are not sufficiently flexible and one would work with more general multivectors to build the combinatorial dynamics~\cite{Mro17,LKMW23}.
Yet another example in which non-injectivity cannot be avoided is in Cerf theory, where non-injective filters form the transition between different injective filters.

\smallskip
To relate the injective with the non-injective setting, we may use a small perturbation, which partitions an interval into vectors or, more generally, a multivector into smaller pieces.
It is not difficult to adapt the poset to the setting of intervals:  while the partition into vectors is not unique, these vectors can be made shallow by transpositions restricted to the interval.
However, other birth-death pairs may depend on some vectors resolving the interval and not on others, so that these transpositions can have more global effects in spite of being restricted to the interval; see \cite{ELMSZ25} for a study of the impact of transpositions on the depth poset. 
The case of general multivectors is more challenging; see~\cite{MWZ26} for the first steps towards an intrinsic definition of depth poset in the general setting of non-injective filters.  

\Skip{
As described in this paper, the construction of the depth poset requires the filter be injective.
This is the generic setting if we are free to assign values to the cells, but not necessarily when the filter is induced by the data.
A point in case is the radius function on the Delaunay mosaic of points in Euclidean space; see e.g.\ \cite[Section~III.3]{EdHa10}.
Even when the points are in generic position, we observe intervals (i.e.\ generalized vectors) consisting of all simplices that are faces of a maximum and cofaces of a minimum.
We refer to \cite{Fre09} for the corresponding generalized discrete Morse theory.

\smallskip
Since every such interval can be partitioned into a number of vectors, it is not difficult to adopt the depth poset to this more general setting.
While such a partition is not unique, we observe that its vectors can be made shallow by transpositions restricted to the interval.
However, other birth-death pairs may depend on some vectors resolving the interval and not on others, so that these transpositions can have more global effects in spite of being restricted to the interval; see \cite{ELMSZ25} for a general study of the impact of a transposition on the depth poset.
Another difficulty that arises with non-injective filters is the non-uniqueness of the Morse complex, at least as defined in this paper.
We therefore ask whether there is a generalizing construction that does not require the resolution of intervals into vectors and leads to a unique Morse complex in the setting of generalized discrete Morse theory.

\smallskip
Another scenario in which non-injective filters arise is in image processing, where we interpret the gray value of pixel as its function value.
Neighboring pixels with common gray value are common, which introduces ambiguities in order of cancellations.
One approach considers the family of injective functions that arise from small perturbations, but there may be more satisfying alternatives; see \cite{MWZ26} for first steps in this direction.
}

\subsection{Ring Coefficients}
\label{sec:9.6}

The definition of birth-death pairs requires field coefficients (see~\cite{ZoCa05}, Section~3.2),
but shallow pairs and cancellations make sense in the case of a filtered Lefschetz complex with ring coefficients. 
Thus, a natural question is whether the depth poset (and implicitly also birth-death pairs) could be defined in this general 
setting, which is the standard setting for homology.

  \subsubsection*{Acknowledgments}

  {\footnotesize
  The third author thanks Thomas Wanner for an illuminating conversation that inspired some of the ideas reported in this paper.}



\end{document}